\NeedsTeXFormat{LaTeX2e}

\documentclass[10pt]{article}

\parskip=3pt

\usepackage{etex}
\usepackage[utf8]{inputenc}
\usepackage{a4wide}
\usepackage{graphicx}
\usepackage{wrapfig}
\usepackage[hmarginratio={1:1},     
vmarginratio={1:1},     
textwidth=16cm,        
textheight=22cm,
heightrounded,]{geometry}
\usepackage{amsmath}
\usepackage{amsthm}
\usepackage{amssymb}
\usepackage{mathrsfs, mathtools}
\usepackage{overpic}
\usepackage{subcaption}
\usepackage{enumitem}
\usepackage[all,cmtip]{xy}

\let\emph\undefined
\newcommand{\emph}[1]{\textsl{#1}}

\usepackage{hyperref}
\usepackage{mathtools}
\mathtoolsset{showonlyrefs}
\numberwithin{equation}{section}

\newtheoremstyle{style1}
{13pt}
{13pt}
{}
{}
{\normalfont\bfseries}
{.}
{.5em}
{}

\theoremstyle{style1}

\newtheorem{definition}{Definition}[section]
\newtheorem{example}[definition]{Example}
\newtheorem{remark}[definition]{Remark}

\newcommand{\catf}[1]{{\mathsf{#1}}}

\usepackage{tocloft}

\newtheoremstyle{style2}
{13pt}
{13pt}
{\slshape}
{}
{\normalfont\bfseries}
{.}
{.5em}
{}

\theoremstyle{style2}

\newtheorem{lemma}[definition]{Lemma}
\newtheorem{theorem}[definition]{Theorem}
\newtheorem{proposition}[definition]{Proposition}
\newtheorem{corollary}[definition]{Corollary}

\usepackage{tikz}
\usetikzlibrary{matrix,arrows,decorations.pathmorphing,shapes.geometric}
\usepackage{tikz-cd}

\bibliographystyle{halpha}

\newcommand{\hooklongrightarrow}{\lhook\joinrel\longrightarrow}
\newcommand\myrot[1]{\mathrel{\rotatebox[origin=c]{#1}{$\Longrightarrow$}}}
\newcommand\SWarrow{\myrot{-135}}


\newcommand{\R}{\mathbb{R}}
\newcommand{\C}{\mathbb{C}}
\newcommand{\Z}{\mathbb{Z}}

\newcommand{\Ca}{\mathcal{C}}
\newcommand{\Fa}{\mathcal{F}}
\newcommand{\Ga}{\mathcal{G}}

\newcommand{\Aa}{\mathcal{A}}

\newcommand{\Ma}{\mathcal{M}}

\newcommand{\Va}{\mathcal{V}}

\usepackage{needspace}

\newcommand{\Bun}{\operatorname{Bun}}

\newcommand{\Man}{\catf{Man}}

\newcommand{\Aut}{\operatorname{Aut}}

\newcommand{\End}{\operatorname{End}}
\newcommand{\act}{\operatorname{act}}
\newcommand{\reg}{\operatorname{reg}}
\newcommand{\Hom}{\operatorname{Hom}}

\newcommand{\id}{\operatorname{id}}
\newcommand{\Out}{\operatorname{Out}}
\newcommand{\pr}{\operatorname{pr}}

\newcommand{\fd}{\operatorname{fd}}
\newcommand{\ad}{\operatorname{ad}}
\newcommand{\Ad}{\operatorname{Ad}}
\newcommand{\Tr}{\operatorname{Tr}}

\newcommand{\rev}{\operatorname{rev}}

\newcommand{\SC}{\mathsf{SC}}
\newcommand{\fSC}{\mathsf{fSC}}
\newcommand{\QCoh}{\operatorname{QCoh}}

\newcommand{\Cat}{\catf{Cat}}

\newcommand{\DS}{\text{/\hspace{-0.1cm}/}}

\let\to\undefined
\newcommand{\to}{\longrightarrow}
\let\mapsto\undefined
\newcommand{\mapsto}{\longmapsto}

\newcommand{\Spaces}{\catf{Spaces}}
\newcommand{\Disk}{\catf{Disk}}

\newcommand{\Alg}{\catf{Alg}}

\newcommand{\Prc}{\catf{Pr}_c}
\newcommand{\DMan}{D\text{-}\catf{Man}_2}

\newcommand{\bBr}{\catf{bBr}}
\newcommand{\Rep}{\catf{Rep}}

\newcommand{\comp}{\mathsf{C}}

\DeclareMathSymbol{\Phiit}{\mathalpha}{letters}{"08} 
\DeclareMathSymbol{\Psiit}{\mathalpha}{letters}{"09}
\DeclareMathSymbol{\Sigmait}{\mathalpha}{letters}{"06}
\DeclareMathSymbol{\Xiit}{\mathalpha}{letters}{"04}
\DeclareMathSymbol{\Piit}{\mathalpha}{letters}{"05}\let\Pi\undefined\newcommand{\Pi}{\Piit}
\DeclareMathSymbol{\Gammait}{\mathalpha}{letters}{"00}
\DeclareMathSymbol{\Omegait}{\mathalpha}{letters}{"0A}\let\Omega\undefined\newcommand{\Omega}{\Omegait}
\DeclareMathSymbol{\Upsilonit}{\mathalpha}{letters}{"07}
\DeclareMathSymbol{\Thetait}{\mathalpha}{letters}{"02}
\DeclareMathSymbol{\Lambdait}{\mathalpha}{letters}{"03}\let\Lambda\undefined\newcommand{\Lambda}{\Lambdait}

\let\Phi\undefined\newcommand{\Phi}{\Phiit}
\let\Sigma\undefined\newcommand{\Sigma}{\Sigmait}
\let\Psi\undefined\newcommand{\Psi}{\Psiit}
\let\Gamma\undefined\newcommand{\Gamma}{\Gammait}

\newcommand{\G}{D}
\newcommand{\g}{d}

\usepackage{float}

\usepackage{tikzit}

\tikzstyle{black dot}=[fill=black, draw=black, shape=circle, minimum size=3pt, inner sep=0pt]
\tikzstyle{black dot small}=[fill=black, draw=black, shape=circle, minimum size=3pt, inner sep=0pt]
\tikzstyle{big white circle}=[fill=white, draw=black, shape=circle, minimum width=0.75cm]
\tikzstyle{white dot big}=[fill=white, draw=black, shape=circle, inner sep=1pt]
\tikzstyle{white dot}=[fill=white, draw=black, shape=circle, minimum size=3pt, inner sep=0pt]
\tikzstyle{flat box}=[fill=white, draw=black, shape=rectangle, minimum width=2.5cm, minimum height=0.5cm]
\tikzstyle{square}=[fill=white, draw=black, shape=rectangle]
\tikzstyle{flat box 2}=[fill=white, draw=black, shape=rectangle, minimum height=0.5cm, minimum width=1.0cm]
\tikzstyle{over }=[front]
\tikzstyle{theta}=[fill=blue, draw=blue, shape=ellipse, minimum height=6pt, minimum width=6pt, inner sep=0pt]
\tikzstyle{thetabig}=[fill=blue, draw=blue, shape=ellipse, minimum width=1cm, minimum height=0.01cm]
\tikzstyle{thetainv}=[fill=white, draw=blue, shape=ellipse, minimum height=6pt, minimum width=6pt, inner sep=0pt]
\tikzstyle{thetabinv}=[fill=white, draw=blue, shape=ellipse, minimum width=1cm, minimum height=0.01cm]
\tikzstyle{rec m}=[fill=white, draw=black, shape=rectangle, minimum width=0.75cm, minimum height=0.5cm]
\tikzstyle{rec s}=[fill=white, draw=black, shape=rectangle, minimum width=0.2cm, minimum height=0.2cm]

\tikzstyle{mid arrow}=[-, postaction={on each segment={mid arrow}}]
\tikzstyle{end arrow}=[->]
\tikzstyle{red mid arrow}=[-, draw={rgb,255: red,214; green,42; blue,51}, postaction={on each segment={mid arrow}}, line width=1pt]
\tikzstyle{blue}=[-, draw=black]
\tikzstyle{over}=[-, link]
\tikzstyle{mapsto}=[{|->}]

\usetikzlibrary{decorations.pathreplacing,decorations.markings}
\tikzset{
	on each segment/.style={
		decorate,
		decoration={
			show path construction,
			moveto code={},
			lineto code={
				\path [#1]
				(\tikzinputsegmentfirst) -- (\tikzinputsegmentlast);
			},
			curveto code={
				\path [#1] (\tikzinputsegmentfirst)
				.. controls
				(\tikzinputsegmentsupporta) and (\tikzinputsegmentsupportb)
				..
				(\tikzinputsegmentlast);
			},
			closepath code={
				\path [#1]
				(\tikzinputsegmentfirst) -- (\tikzinputsegmentlast);
			},
		},
	},
	mid arrow/.style={postaction={decorate,decoration={
				markings,
				mark=at position .5 with {\arrow[#1]{stealth}}
	}}},
}
\tikzset{%
	link/.style    = { white, double = black, line width = 1.8pt,
		double distance = 0.41pt },
	channel/.style = { white, double = black, line width = 0.8pt,
		double distance = 0.61pt },
}
\usepackage{multicol}

\begin{document}
		
	\vspace*{-1.5cm}

	\vspace{5mm}
	
	\begin{center}
		\textbf{\LARGE{Finite symmetries of quantum character stacks}}\\
		\vspace{1cm}
	{\large Corina Keller $^{a}$} \ \ and \ \ {\large Lukas Müller $^{b}$}

\vspace{5mm}

{\em $^a$ Institut Montpelliérain Alexander Grothendieck \\ 
 Université de Montpellier \\
     Place Eugène Bataillon, 34090 Montpellier}\\
    {\tt corina.keller@umontpellier.fr }
\\[7pt]
{\em $^b$ Max-Planck-Institut f\"ur Mathematik\\
	Vivatsgasse 7, D\,--\,53111 Bonn}\\
{\tt lmueller4@mpim-bonn.mpg.de\ }
		
		\vspace{5mm}

	\end{center}

	\begin{abstract}\noindent 
For a finite group $D$, we study categorical factorisation homology on oriented surfaces equipped with principal $D$-bundles, which `integrates' a (linear) balanced braided category $\mathcal{A}$ with $D$-action over those surfaces. For surfaces with at least one boundary component, we identify the value of factorisation homology with the category of modules over an explicit algebra in $\mathcal{A}$, extending the work of Ben-Zvi, Brochier and Jordan to surfaces with $\G$-bundles. Furthermore, we show that the value of factorisation homology on annuli, boundary conditions, and point defects can be described in terms of equivariant representation theory.  \par 
Our main example comes from an action of Dynkin diagram automorphisms on representation categories of quantum groups. We show that in this case factorisation homology gives rise to a quantisation of the moduli space of flat twisted bundles.  
\end{abstract}
	
	\tableofcontents

\section{Introduction}
In this paper we extend the work on categorical factorisation homology by Ben-Zvi, Brochier and Jordan~\cite{bzbj, bzbj2} to (framed) $\mathsf{E}_2$-algebras with an action of a finite group $\G$. This leads to functorial invariants for manifolds equipped with an 
$SO(2)\times \G$ tangential structure, or in more geometric terms oriented 2-dimensional manifolds equipped with 
principal $\G$-bundles. \par 
Factorisation homology~\cite{AF,Lurie} is a local-to-global invariant
which `integrates' higher algebraic quantities, namely disk algebras in a symmetric monoidal higher category $\mathcal{C}$, over manifolds. We will work with $\mathcal{C}=\Prc$, the 2-category of $k$-linear compactly generated presentable categories for $k$ an algebraically closed field of characteristic 0. In the $\G$-decorated setting, the coefficients $\mathcal{A}$ for factorisation homology are given by balanced braided monoidal categories equipped with an additional $\G$-action through balanced braided monoidal automorphisms. Factorisation homology then assigns to every oriented 2-dimensional manifold $\Sigma$
equipped with a principal $\G$-bundle, described by its classifying map $\varphi \colon \Sigma \longrightarrow B\G$, a linear category 
\begin{align}
\int\displaylimits_{(\Sigma,\varphi)} \mathcal{A} \in \Prc \ \ .
\end{align} 
This construction is functorial in the pair $(\Sigma,\varphi)$. \par 
Our main example will be $\mathcal{A}=\Rep_q(G)$, the (locally finite) representation category of a quantum group associated to a reductive group $G$ and $q \in \mathbb{C}^\times$ (we assume $q$ is not a root of unity), which admits a natural action of the group of outer automorphisms $\Out(G)$ of $G$. We use these coefficients to construct a functorial quantisation of the moduli space of flat twisted bundles related to finite $\Out(G)$-symmetries in gauge theories. Before addressing the role of symmetries, we give a brief overview on the factorisation homology approach to the quantisation of moduli spaces of flat bundles. \par 
For a reductive algebraic group $G$, the moduli space $\mathcal{M}(\Sigma)$ of flat principal $G$-bundles over a Riemann surfaces $\Sigma$ is ubiquitous in mathematical physics and symplectic geometry: For example, the symplectic volume of $\mathcal{M}(\Sigma)$ computes the 
topological limit of the partition function of two dimensional Yang-Mills theory on $\Sigma$~\cite{Witten:1991gt}, the state space of 3-dimensional Chern-Simons theory on $\Sigma$ can be constructed by applying geometric quantisation to $\mathcal{M}(\Sigma)$~\cite{Hitchin,geomquant}, and deformations of the category of quasi-coherent sheaves on $\mathcal{M}(\Sigma)$ describe boundary condition in the 4-dimensional Kapustin-Witten theory~\cite{KW,BZN2}. \par 
In~\cite{BZN,bzbj,bzbj2} it was shown that quasi-coherent sheaves on the classical moduli space can be understood in terms of the factorisation homology of $\Rep(G)$:
\begin{align}
\operatorname{QCoh}(\mathcal{M}(\Sigma)) \cong \int_{\Sigma} \Rep(G) \ \ .
\end{align}
The category $\Rep(G)$ admits a well-studied deformation by the category $\Rep_q(G)$ of (locally finite) $U_q(\mathfrak{g})$-modules. Thus, using the local-to-global property of factorisation homology, the quantum analog of the category of quasi-coherent sheaves on $\mathcal{M}(\Sigma)$ is defined in~\cite{bzbj, bzbj2} as the \emph{quantum character stack} $\int_{\Sigma} \Rep_q(G)$. This is a mathematical construction of the 2-dimensional part of the 4-dimensional Kapustin-Witten theory as a topological quantum field theory, which assigns to an oriented surfaces $\Sigma$ a quantisation of the moduli space of flat $G$-bundles on $\Sigma$, where `quantisation' is understood as a deformation of the category of quasi-coherent sheaves on the moduli space. \par
To explain the physical role of the $\G$-action, we turn our attention to symmetries of quantum field theories, in particular to symmetries of moduli spaces of $G$-local systems. One source for symmetries are automorphisms of the classical space of fields preserving the classical action functional. In gauge theories the space of fields is most naturally understood as a higher differential geometric object, namely a smooth stack, and automorphisms should take this higher geometric structure into account. Concretely, this means that the action of a symmetry group $\G$ only needs to close up to gauge 
transformations.   
In the physics literature these are known as \emph{fractionalised symmetries}~\cite{Wang:2017loc} and can be described by group extensions
\begin{align}
	1\longrightarrow G\longrightarrow \widehat{G} \longrightarrow D \longrightarrow 1 \ \ .
\end{align}
where the group $\widehat{G}$ encodes the non-trivial interaction of gauge transformations and the symmetry 
group $D$. We refer to \cite{BrauerGroup, MSGauge} for a detailed discussion of these symmetries in the case of 
discrete gauge theories. \par 
We will restrict our attention to extension of the form
\begin{align}
	1\longrightarrow G \longrightarrow G \rtimes \Out(G) \longrightarrow \Out(G) \longrightarrow 1
\end{align}
with $\G = \Out(G)$.\footnote{To handle arbitrary extensions one could use non-abelian 2-cocycles.} An element $\kappa \in \Out(G)$ acts on a gauge field described by a principal $G$-bundle with connection by forming the associated bundle along the
group homomorphism $\kappa \colon G\longrightarrow G$. In~\cite{2DYM} these symmetries have been 
studied in the context of 2-dimensional Yang-Mills theory. They restrict to an action of $\Out(G)$ on the moduli space $\mathcal{M}(\Sigma)$. 
One motivation for developing the general 
framework presented in this paper was to study these symmetries for quantum character stacks. \par 
On the level of the local coefficients, i.e.~for $\mathsf{fE}_2$-algebras, the symmetry is realised through the $\Out(G)$-action on $\Rep(G)$ by pullbacks. In Section~\ref{Ex: Aut-extension} we show that this action extends to $\Rep_q(G)$ and hence we can compute the value of factorisation homology for $\Rep_q(G)$ on oriented surfaces with principal $\Out(G)$-bundles. By evaluation on surfaces with trivial bundles we get an action of $\Out(G)$ on the quantum character stack associated to an arbitrary surface. This implements the action of the symmetry on the quantum character stack. \par 
Factorisation homology on surfaces equipped with non-trivial $\Out(G)$-bundles has also a natural field theoretical interpretation: The value of factorisation homology describes the coupling of the quantum character field theory to non-trivial $\Out(G)$-background fields. In~\cite{2DYM} the topological limit of the partition function of 2-dimensional Yang-Mills theory coupled to an $\Out(G)$-background field $\varphi \colon \Sigma \longrightarrow B \Out(G)$ 
was related to the symplectic volume of the moduli space of flat $\varphi$-twisted $G$-bundles $\mathcal{M}_\rho(\Sigma)$~\cite{WildChar, Meinrenken:Convexity,Zerouali}. We will show the analogous statement for quantum character stacks, i.e.~that they provided a quantisation of the category of quasi-coherent sheaves on $\mathcal{M}_\rho(\Sigma)$.  

\subparagraph{Summary of results and outline.}
In Section~\ref{Sec: Setup} we review factorisation homology following~\cite{AF}, with a focus on categorical factorisation homology on oriented 2-dimensional surfaces with $\G$-bundles for a finite group $\G$. We will also allow for certain stratifications along the lines of~\cite{AFT}, namely boundary conditions and point defects. The section concludes with some details related to the algebraic quantities appearing in this paper. 
In particular, we introduce the representation category of a quantum group $\Rep_q(G)$, and show that it is naturally endowed with an $\Out(G)$-action. \par 
After the setup is established, we compute in Section \ref{Sec:FHD-mfds} the factorisation homology with coefficients in a rigid braided tensor category $\mathcal{A}$ with $\G$-action $\vartheta$ of an oriented punctured surface $\Sigma$ equipped with a $\G$-bundle. To that end we apply reconstruction techniques for module categories, following ideas presented in \cite[Section 5]{bzbj}. We use a combinatorial description of the surface with decoration $\varphi \colon \Sigma \to B\G$, namely a decorated fat graph model $(P, \g_1, \dots, \g_n)$, see Definition~\ref{Def: Gluing pattern}. From $(P, \g_1, \dots, \g_n)$ we can define an algebra $a_P^{\g_1, \dots, \g_n} \coloneqq \bigotimes_{i = 1}^n \mathcal{F}_\mathcal{A}^{\g_i}$ in $\mathcal{A}$, where each $\mathcal{F}_{\mathcal{A}}^{\g_i}=\int^{V \in \text{comp}(\mathcal{A})} V^\vee \boxtimes \vartheta(\g_i^{-1}).V$ is a twisted version of Lyubashenko's coend \cite{LyubCoend} in $\mathcal{A}$. We show in Theorem \ref{Thm: value surface} that there is an equivalence of categories 
$$
\int\displaylimits_{(\Sigma, \varphi)} \mathcal{A} \cong a_P^{\g_1, \dots, \g_n}\text{-mod}_\mathcal{A} \ \ ,
$$ 
identifying factorisation homology with the category of modules over an algebra which can be described in purely combinatorial terms. This result is an extension of \cite[Theorem 5.14]{bzbj} to surfaces with $\G$-bundles.\par 
In Section \ref{Sec: LB} we explore the algebraic structure that arises on the collection of the factorisation homologies
$$
\int\displaylimits_{\varphi \colon \mathbb{S}^1\times \R \to \G} \mathcal{A}
$$
for varying decoration $\varphi$, which turn out to assemble into an algebra over the little bundles operad \cite{littlebundles}. It was shown in \cite{littlebundles} that categorical little bundles algebras can be identified with braided $\G$-crossed categories, as defined by Turaev \cite{turaevgcrossed,turaevhqft}. We
compute the resulting $\G$-crossed categories concretely in terms of bimodule traces introduced in \cite{fss}. \par 
The goal of Section \ref{Sec: Defects} is to give an explicit description of the algebraic data describing boundary conditions and point defects in $\G$-structured factorisation homology. It is well-known that for oriented 2-manifolds without $\G$-bundles, boundary conditions are incorporated by algebras over the Swiss-cheese operad, and point defects by $\mathsf{E}_2$-modules~\cite{AFT,ginot}. For algebras in linear categories, \cite[Theorem 3.11]{bzbj2} shows that the latter coincides with the notion of a braided module category as introduced in \cite{BMod3, BMod1, BMod2}. In order to extend these algebraic structures to the $\G$-decorated setting, we will work with combinatorial models for the decorated Swiss-cheese operad and the operad of decorated disks with marked points respectively. If we let $\mathcal{A}$ be a balanced braided tensor category with $\G$-action, we find:
\begin{itemize}
\item Boundary conditions are given by a monoidal category $\mathcal{C}$ with $\G$-action and a $\G$-equivariant braided functor $\mathcal{A} \longrightarrow \mathcal{Z}(\mathcal{C})$ into the Drinfeld centre of $\mathcal{C}$ (see Proposition \ref{prp:D-bdrycondition}).
\item Point defects are equivariant balanced right modules over $\mathcal{A}$ as given in Definition \ref{df:eqbalancedmodule} (see Proposition \ref{prp:D-mkdpoints}).
\end{itemize}
In Section \ref{Sec:ClsdMfd}, we treat the case of closed manifolds. \par 
Lastly, Section \ref{Sec: QCV} is devoted to our main application, the quantisation of the moduli space of twisted flat bundles via $\Out(G)$-structured factorisation homology with coefficients in $\Rep_q(G)$:
For a connected surface $\Sigma = \Sigma_{g,r}$ of genus $g$ and with $r > 0$ boundary components, together with a chosen point $p$ on the boundary, recall that the $G$-representation variety is the affine variety $\Hom(\pi_1(\Sigma), G)$ of group homomorphisms. Since the fundamental group of $\Sigma$ is free on $n = 2g + r -1 $ generators we have $\Hom(\pi_1(\Sigma), G) \cong G^{n}$. Via the holonomy map, the $G$-representation variety is identified with the moduli space $\mathcal{M}^\circ(\Sigma)$ of flat $G$-bundles on $\Sigma$ with a trivialisation over $p \in \partial \Sigma$, and there is an action of $G$ on $\mathcal{M}^\circ(\Sigma)$ changing the trivialisation. \par 
Now, given an $\Out(G)$-bundle $\rho \colon \pi_1(\Sigma_{g,r}) \to \Out(G)$ described by a tuple $(\kappa_1, \dots, \kappa_n)$ of elements $\kappa_i \in \Out(G)$, we can define the \emph{$\Out(G)$-twisted representation variety} $\mathcal{M}^\circ_\rho(\Sigma) = \Hom_\rho(\pi_1(\Sigma), G)$, where now the maps $\pi_1(\Sigma)\to G$ are no longer group homomorphisms, but twisted by the elements $\kappa_i$, see Section \ref{Sec:ModuliBunTwisted} for the formal definitions. The \emph{moduli space of flat $\rho$-twisted bundles} is the stacky quotient $\mathcal{M}^\circ_\rho(\Sigma) /^\rho G$, with respect to the $\rho$-twisted conjugation action, and we show that the category of quasi-coherent sheaves on this moduli space can be computed via $\Out(G)$-structured factorisation homology
$$
\int\limits_{\rho \colon \Sigma \to B\Out(G)} \Rep(G) \cong \bigotimes_{i = 1}^n \mathcal{O}^{\kappa_i}(G)\text{-mod}_{\Rep(G)} \ \ ,
$$
where on the right hand side $\otimes_{i = 1}^n \mathcal{O}^{\kappa_i}(G)$ is the algebra of functions on $\mathcal{M}^\circ_\rho(\Sigma) \cong G^n$ with the induced $\rho$-twisted action by $G$. \par 
We then follow the approach of Ben-Zvi, Brochier and Jordan \cite{bzbj} to quantise these moduli spaces by locally choosing coefficients in the representation category of the corresponding quantum group $\Rep_q(G)$ and subsequently gluing this local data together via factorisation homology over the surface $\Sigma$ decorated with an $\Out(G)$-bundle:
$$
\int\limits_{\rho \colon \Sigma \to B\Out(G)} \Rep_q(G) \cong a_P^{\kappa_1,\dots,\kappa_n}\text{-mod}_{\Rep_q(G)} \ \ .
$$ 
We then show by means of a direct computation that the above provides a quantisation of the moduli space of flat twisted bundles. To that end, we present in Proposition \ref{prp:twistedFR} a novel combinatorial formula for the Poisson structure on $\mathcal{M}_\rho^\circ(\Sigma)$ and in Theorem \ref{Thm:QuantisationTwistedFR} we prove that the algebra\footnote{The algebra $a_\hbar^{\kappa_1,\dots,\kappa_n}$ is the combinatorial algebra $a_P^{\kappa_1,\dots,\kappa_n}$ in $\mathcal{A} = \Rep_\hbar(G)$.} $a_\hbar^{\kappa_1,\dots,\kappa_n}$ is a deformation quantisation of the algebra of functions on $\mathcal{M}_\rho^\circ(\Sigma)$. 

\subparagraph{Relation to topological field theories.}
We conclude the introduction by briefly commenting on the relation to topological field
theories. We restrict our discussion to \emph{framed} field theories since we want to highlight the additional structure coming from the 
bundle decorations and because this is the case most studied in the literature on fully extended field theories. In the undecorated setting, i.e.~for manifolds without $\G$-bundles, factorisation homology gives 
rise to fully extended topological field theories.
More precisely, for an $\mathsf{E_n}$-algebra $\mathcal{E}$ in a (nice) symmetric monoidal $(\infty,1)$-category $\mathcal{C}$, Scheimbauer~\cite{claudiathesis} explicitly constructed a fully extended framed topological field theory taking values in the higher Morita category of 
$\mathsf{E_n}$-algebras~\cite{claudiathesis,Hau17, JFS} in $\mathcal{C}$ via factorisation homology for framed manifolds, assigning $\mathcal{E}$ to the framed point. For $n=2$ and $\mathcal{C}=\Prc$ the Morita 
category is the 4-category $\catf{BrTens}$ of braided tensor categories with central algebras\footnote{For $\mathcal{A},\mathcal{B}\in \catf{BrTens}$ a \emph{central algebra} is an $\mathsf{E}_1$-algebra in $\mathcal{A}$-$\mathcal{B}$-bimodules.} as 
1-morphisms, central bimodules as 2-morphisms, and functors and natural transformations as 3- and 4-morphisms, respectively~\cite{dualizabilityBrTens}. Every object of $\catf{BrTens}$ is 2-dualisable~\cite{GS,dualizabilityBrTens} and 
hence by the cobordism hypothesis~\cite{BD,CH} defines a 2-dimensional framed topological field theory, namely the one explicitly constructed by Scheimbauer. \par 
If one adds decorations with principal $\G$-bundles, the corresponding topological field theories are known as $\G$-equivariant field theories~\cite{turaevhqft}. Factorisation homology for 
$\mathsf{E}_n$-algebras with $\G$-action is expected to provide examples of $\G$-equivariant 
field theories with values in the Morita category of $\mathsf{E}_n$-algebras. Our work can be understood as exploring this (expected) equivariant field theory in the oriented setting and dimension $n=2$ with values in $\Prc$. As a complementary example 
it was shown in~\cite{MuellerWoikeHH} that equivariant higher Hochschild homology, that is 
factorisation homology for $\mathsf{E_\infty}$-algebras with $\G$-action in chain complexes, gives 
examples of equivariant field theories in any dimension $n$. \par 
$\G$-equivariant field theories can also be studied through the cobordism hypothesis, which implies 
that 2-dimensional framed fully extended $\G$-equivariant field theories with values in $\catf{BrTens}$
are described by functors $B\G \longrightarrow \catf{BrTens}$. Such a functor is described by picking out an object $\mathcal{A} \in 
\catf{BrTens}$, together with a central algebra $\mathcal{M}_\g$ for every $\g \in \G $, a central $\mathcal{M}_{\g_2} \circ \mathcal{M}_{\g_1}$-$\mathcal{M}_{\g_2\g_1} $-bimodule for every pair
$\g_1,\g_2 \in \G$ and furthermore 3- and 4-morphisms for all triples and quadruples of group elements, respectively, satisfying a coherence condition involving five group elements. This data 
can be constructed from an $\mathsf{E}_2$-algebra in $\Prc$ with $\G$-action by setting $\mathcal{M}_\g = 
\mathcal{A}$, seen as an $\mathsf{E}_1$-algebra in bimodules over itself, where the left action is
twisted by acting with $\g$. The coherence isomorphisms for the $\G$-action induce the additional
data. However, this is only a special case for the data classifying equivariant framed field 
theories according to the cobordism hypothesis and situations outside this class do not seem to be 
accessible using factorisation homology with values in $\Prc$.\par 
The type of factorisation homology we compute in this article is a special case of equivariant factorisation homology for global quotient orbifolds~\cite{EFH}; namely the case of free actions. The general case, which requires additional input data, should give rise to field 
theories defined as functors out of the bordism category introduced in~\cite{EBord}. Hence, our results provide a first steps towards computing this field theory.   

\subparagraph{Acknowledgements.}
We thank Bart Vlaar for helpful discussions on the action of Dynkin diagram automorphisms on $\Rep_q(G)$. We thank  Adrien Brochier, Damien Calaque, David Jordan, Christoph Schweigert, and Lukas Woike for helpful discussions and correspondence. CK has received funding from the European Research Council (ERC) under the European Union's Horizon 2020 research and innovation programme (grant agreement No.~768679). LM gratefully acknowledges support from the Max-Planck-Institute for Mathematics in Bonn.

\section{Setup}\label{Sec: Setup}
In this section we review some of the necessary mathematical background and introduce the main example of 
$\Out(G)$-actions on the representation category of a quantum group $\Rep_q(G)$ leading to a coherent 
quantisation of moduli spaces of twisted flat bundles in Section~\ref{Sec: QCV}. 

\subsection{Review of factorisation homology for manifolds with $\mathcal{G}$-structures}\label{Sec:ReviewFH}

Let $\Man_n$ be the topological category of $n$-dimensional manifolds which admit a finite good open cover 
with embeddings as morphisms. The morphism spaces are equipped with the compact-open topology. The disjoint union of
manifolds equips $\Man_n$ with the structure of a symmetric monoidal 
category. Let $\mathcal{G}$ be a topological group
and $\rho \colon \mathcal{G}\longrightarrow GL(n)$ a continuous group homomorphism.  
A \emph{$\mathcal{G}$-structure} on a manifold $M$ is a homotopy lift
\begin{equation}
\begin{tikzcd}
 & B\mathcal{G} \ar[d,"B\rho"] \\
 M \ar[r]\ar[ru] & BGL(n)
\end{tikzcd}
\end{equation} 
of the classifying map for the frame bundle. 
These homotopy lifts correspond to a reduction of the structure group of the frame bundle to $\mathcal{G}$. 
There is a space of tangential $\mathcal{G}$-structures on
$M$, given by the mapping space $\Spaces_{/BGL(n)}(M, B \mathcal{G})$. This space is a model for the $\infty$-groupoid of tangential $\mathcal{G}$-structures on $M$. Homotopies in this space lead to a natural notion of morphisms of tangential structures. 

\begin{example}\label{Ex: G-structure}
We list some important examples of $\mathcal{G}$-structures.
\begin{itemize}
	\item For $\mathcal{G}=\star$, a $\mathcal{G}$-structure is the same as the choice of a framing on $M$.
	\item For $\mathcal{G}=SO(n) \longrightarrow GL(n)$ the canonical embedding, a $\mathcal{G}$-structure is the same as the choice of an orientation.
	\item For $\mathcal{G}=SO(n) \times \G$ and $\rho \colon SO(n) \times \G \xrightarrow{\pr_{SO(n)}} SO(n) \longrightarrow GL(n)$, a $\mathcal{G}$-structure is the choice of an orientation on $M$ together with a map $M\longrightarrow B\G$, i.e.~a principal $\G$-bundle. This is the example considered in this paper.
\end{itemize}  
\end{example}

To construct the $\infty$-category $\Man_n^\mathcal{G}$ of manifolds with $\mathcal{G}$-structure we proceed as follows: there is a symmetric monoidal functor $\tau \colon \Man_n \longrightarrow \Spaces_{/BGL(n)}$ of $\infty$-categories sending a manifold $M$ to the classifying map $M\longrightarrow BGL(n)$ of its frame bundle~\cite[Section
2.1]{AF}. The category $\Man_n^\mathcal{G}$ of manifolds with tangential $\mathcal{G}$-structure is defined as the pullback
\begin{equation}
\begin{tikzcd}
\Man_n^\mathcal{G} \ar[r] \ar[d] & \Spaces_{/B\mathcal{G}} \ar[d] \\
\Man_n \ar[r,"\tau"] & \Spaces_{/BGL(n)}
\end{tikzcd}
\end{equation}
Denote by $\Disk_n^\mathcal{G} \subset \Man_n^\mathcal{G}$ the full symmetric monoidal subcategory whose objects are disjoint unions of Euclidean spaces. Let $\Va$ be a symmetric monoidal $\infty$-category. A \emph{$\Disk_n^\mathcal{G}$-algebra} in $\Va$ is a symmetric monoidal functor $\mathcal{A} \colon \Disk_n^\mathcal{G} \longrightarrow \Va$. 

\begin{remark}
For the tangential structures of Example~\ref{Ex: G-structure}, disk algebras have a description in terms of more classical objects:
\begin{itemize}
\item A $\Disk^\star_n$-algebra is an $\mathsf{E_n}$-algebra, see for example~\cite{AF}.
\item A $\Disk^{SO(n)}_n$-algebra is a framed $\mathsf{E_n}$-algebra, see for example~\cite{AF}. 
\item A $\Disk^{SO(n)\times \G}_n$-algebra is a framed $\mathsf{E_n}$-algebra 
equipped with a $\G$-action, see for example~\cite[Proposition 4.6]{EFH}.
\end{itemize} 
\end{remark}

\begin{example}
In Figure \ref{fig:diskalgebras} we give a sketch for $n = 2$ of the disk operations in $\Disk^{\mathcal{G}}_2$, for the tangential structures of the previous remark, and the corresponding algebraic structures on $\mathcal{A} \colon \Disk^{\mathcal{G}}_2 \to (\mathcal{V},\otimes)$.
\end{example}

\begin{figure}[h!]
\vspace{0.25cm}
	\begin{overpic}[scale=0.5
		,tics=10]
		{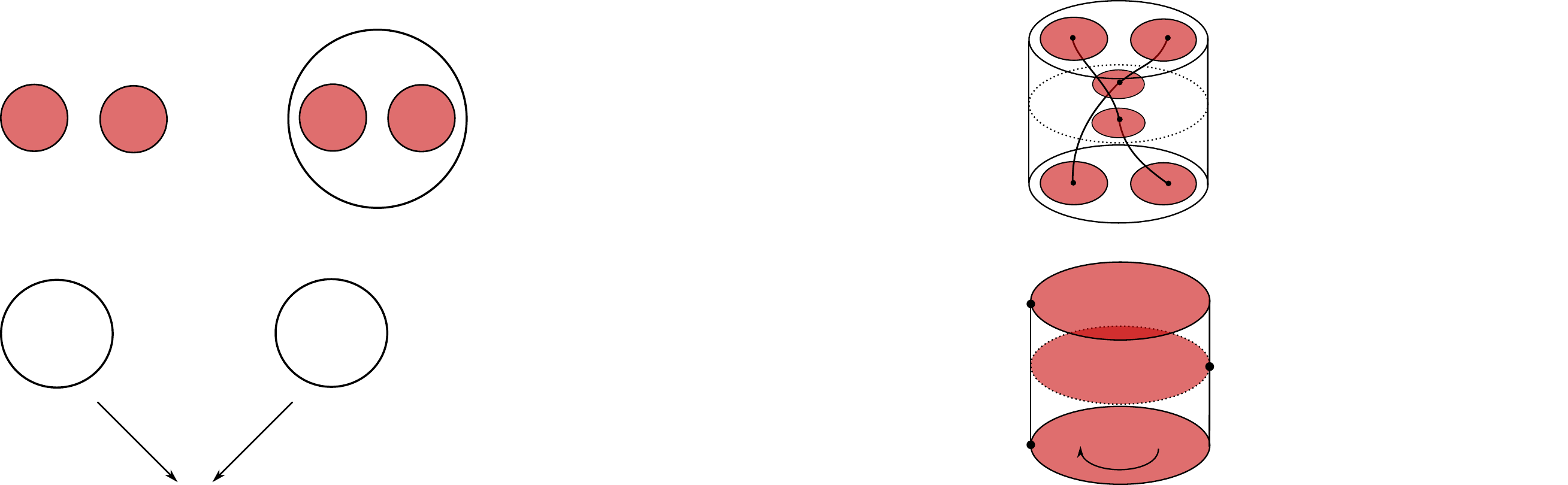}
		\put(13.5,22.5){$\hookrightarrow$}
		\put(32.5,22.5){$\rightsquigarrow \quad m \colon \mathcal{A} \otimes \mathcal{A} \to \mathcal{A}$}
		\put(5.5,3){$e$}
		\put(10,-2.5){$B\G$}
		\put(18,3){$e$}
		\put(10,8.5){$\hooklongrightarrow$}
		\put(11,11){$\id$}
		\put(11,5){$\SWarrow$}
		\put(14,4){$\g$}
		\put(30,7.5){$\rightsquigarrow \quad \vartheta(\g) \colon \mathcal{A} \to \mathcal{A}$}
		\put(80,22.5){$\rightsquigarrow \quad \sigma \colon m \Longrightarrow m\circ \tau$}
		\put(80,7.5){$\rightsquigarrow \quad \theta \colon \id_{\mathcal{A}} \Longrightarrow \id_{\mathcal{A}}$}
	\end{overpic}  
	\vspace{0.25cm}
	\caption{\emph{First row}: Disk embeddings (or isotopies thereof) in $\Disk^{\ast}_2$ that give rise to the multiplication $m$ and the braiding $\sigma$ in the $\mathsf{E_2}$-algebra $\mathcal{A}$. Here $\tau \colon A \otimes A \to A \otimes A$ denotes the braiding in $\mathcal{V}$. \emph{Second row}: On the right, the additional operation in the oriented case given by a loop in the space of disk embeddings in $\Disk^{SO(2)}_2$, rotating the disk by $2\pi$. Together with the operations in the first row, this endows $\mathcal{A}$ with the structure of a framed $\mathsf{E_2}$-algebra. On the left, the additional operation in the $D$-decorated oriented case, given by the identity disk embedding in $\Disk^{SO(2) \times \G}_2$ with homotopy $d \colon \id^*(e) \Rightarrow e$, inducing an automorphism of $\mathcal{A}$ for each $\g \in \G$, i.e.~a $\G$-action on $\mathcal{A}$.}
\label{fig:diskalgebras}
\end{figure}

Let $(\mathcal{V},\otimes)$ be a symmetric monoidal $\infty$-category. We assume that $\mathcal{V}$ admits sifted colimits and that $\otimes$ preserves them in each component. \emph{Factorisation homology} $\int_\bullet \Aa$ with coefficients in the $\Disk_n^\mathcal{G}$-algebra $\mathcal{A}$ is the left Kan-extension~\cite{AF}:
\begin{equation}
\begin{tikzcd}
\Disk_n^\mathcal{G} \ar[r,"\Aa"]  \ar[d] & \Va \\
\Man_n^\mathcal{G} \ar[ru,"\int_\bullet \Aa",swap] & 
\end{tikzcd}
\end{equation}
The condition that $\otimes$ preserves sifted colimits makes factorisation homology into a symmetric monoidal functor. Hence, the value of factorisation homology on any manifold $M$ is naturally pointed by the inclusion $\emptyset \hookrightarrow M$ of the empty manifold:
\begin{align}
\int_\emptyset \mathcal{A} \cong 1_\mathcal{V} \longrightarrow \int_M \mathcal{A} \ \ . 
\end{align} 

\subsubsection{Excision}\label{Sec:Excision}

The main tool for computing factorisation homology will be $\otimes$-excision. Excision allows one to reconstruct 
the value of factorisation homology from a certain decomposition of $M$, namely from a {collar-gluing}~\cite[Section 3.3]{AF}. We recall that a \emph{collar-gluing} of a $\mathcal{G}$-structured manifold $M$ is given by a smooth map 
$$
f\colon M \longrightarrow [-1,1] \ \ ,
$$ 
such that $f^{-1}(-1,1) \longrightarrow (-1,1)$ is a manifold bundle. If we define $M_-\coloneqq f^{-1}[-1,1)$, $M_+\coloneqq f^{-1}(-1,1]$ and $M_0\coloneqq f^{-1}(-1,1)$, we will often denote the collar-gluing by $M = M_-\bigcup_{M_0} M_+$. 

\begin{figure}[t]
\begin{center}
\vspace{0.25cm}
	\begin{overpic}[scale=0.3
		,tics=10]
		{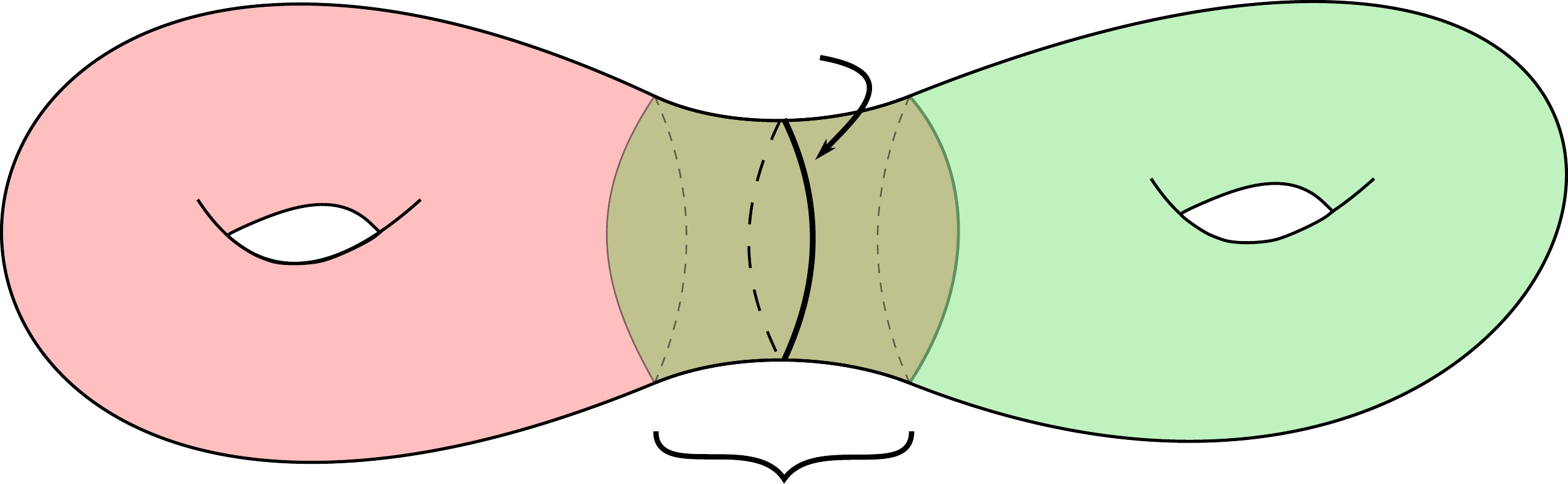}
		\put(47.5,-5){$M_0$}
		\put(47.5,27.5){$N$}
		\put(7.5,22.5){$M_-$}
		\put(90,22.5){$M_+$}
	\end{overpic}  
	\vspace{0.5cm}
	\caption{An example of a collar-gluing.}
\label{fig:collar-gluing}
\end{center}
\end{figure}

We can choose an equivalence $\theta \colon M_0 \xrightarrow{\ \cong \ } N\times (-1,1)$ in the $\infty$-category of $\mathcal{G}$-structured manifolds, where $N$ is the fibre over an arbitrary point in $(-1,1)$, as illustrated in Figure \ref{fig:collar-gluing}. The object $\int_{N\times (-1,1)} \mathcal{A}$ has a natural 
$E_1$-algebra structure in $\Va$, which gives rise to an $E_1$-algebra structure on $\int_{M_0} \mathcal{A}$. We fix oriented embeddings 
\begin{align}
\mu_+ \colon (-1,1) \sqcup (-1,1] \longrightarrow (-1,1] \  \text{ and } \  \mu_- \colon [-1,1) \sqcup (-1,1) \longrightarrow [-1,1) \ \ ,
\end{align}      
which are the identity in a neighbourhood of the boundary. Using the equivalence $\theta$, we lift these embeddings to maps $\act_- \colon M_- \sqcup M_0 \longrightarrow M_- $ and $\act_+ \colon M_0 \sqcup M_+ \longrightarrow M_+$ of $\mathcal{G}$-structured manifolds, see Figure~\ref{fig:modulestructure} below for a sketch.

\begin{figure}[b]
\begin{center}
\vspace{0.25cm}
	\begin{overpic}[scale=0.3
		,tics=10]
		{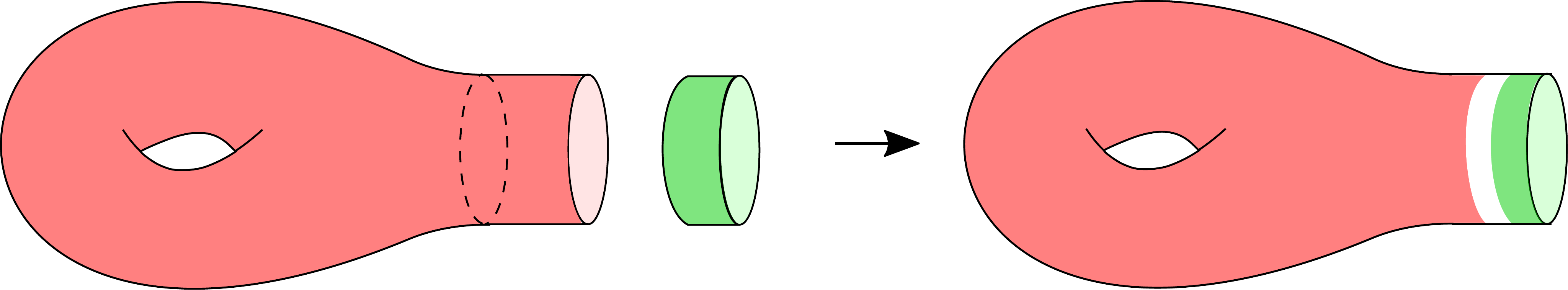}
	\end{overpic}  
	\vspace{0.5cm}
	\caption{The map which induces the right $\int_{M_0}\mathcal{A}$-module structure on $\int_{M_-} \mathcal{A}$. Here, the green collar depicts the manifold $N \times (-1,1)$.}
\label{fig:modulestructure}
\end{center}
\end{figure}

Evaluation of factorisation homology on $\act_-$ and $\act_+$ equips $\int_{M_-} \mathcal{A}$ and $\int_{M_+}\mathcal{A}$ with the structure of a right and left module over $\int_{M_0}\mathcal{A}$, respectively. At this point we want to highlight that the module structures depend on the chosen trivialisation $\theta$; see Section \ref{sec: excision D-mfd} for an example. The value of factorisation homology on $M$ can be computed as the relative tensor product~\cite[Lemma 3.18]{AF}
\begin{align}\label{Eq: Excision}
\int_M \mathcal{A} \cong  \int_{M_-} \mathcal{A}~\underset{{\int_{M_0}\mathcal{A}}}{\bigotimes}~\int_{M_+} \mathcal{A} \ \ ,
\end{align} 
defined through the bar construction in $\Va$.

\subsubsection{Point defects and boundary conditions}\label{Sec: PD and BC}

Factorisation homology admits a natural extension to stratified manifolds~\cite{AFT}, which in more physics oriented language 
corresponds to incorporating defects in the field theory that we wish to study via factorisation homology. For us, only two types of defects will be relevant; namely point defects and boundary 
conditions. Instead of going through the heavy machinery of stratified manifolds, we only mention the concrete examples studied in 
this paper following~\cite{bzbj2}. \par 
We fix $\mathcal{G} = SO(2) \times \G$ and define the $\infty$-category $\Man_{2,\ast}^{\mathcal{G}}$ whose objects are oriented 2-dimensional manifolds $\Sigma$, together with a collection of marked points $p_1,\dots ,p_n\in \Sigma$ and a continuous map $\varphi \colon \Sigma \setminus \{ p_1,\dots ,p_n \}\longrightarrow B\G$. Morphisms are embeddings of manifolds, mapping marked 
points bijectively onto marked points, which are compatible with the morphisms into $B\G$. We denote by $\Disk_{2, \ast}^{\mathcal{G}}$ the full subcategory whose objects are disjoint unions of disks with one or zero marked points. Notice that we do not require the $\G$-bundles to extend to the whole of $\Sigma$. As for smooth manifolds, factorisation homology can again be defined by left Kan extension:\footnote{The slice categories appearing in the coend formula for the left Kan extension are not sifted. Hence, here we need to assume that $\Va$ is tensor cocomplete.} 
\begin{equation}
	\begin{tikzcd}
		\Disk_{2, \ast}^{\mathcal{G}} \ar[r,"\mathcal{F}"]  \ar[d] & \Va \\
		\Man_{2,\ast}^{\mathcal{G}} \ar[ru,"\int_\bullet \mathcal{F}",swap] & 
	\end{tikzcd}
\end{equation}   
      
The second type of defects we want to study are boundary conditions. To that end, we define the category $\Man_{2,\partial}^{\Ga}$ of oriented 2-dimensional manifolds $\Sigma$ with boundary $\partial \Sigma$ and continuous maps $\Sigma \longrightarrow B\G$. We denote by $\Disk_{2,\partial}^{\Ga}$ the full subcategory with objects disjoint unions of disks and half disks, by the latter we mean manifolds
diffeomorphic to $\R\times \R_{\geq 0}$. \par 
We will adopt the following terminology: 
\begin{definition}
By \emph{point defects} in $\mathcal{G} = SO(2) \times \G$-structured factorisation homology we mean a symmetric monoidal functor $\mathcal{F} \colon \Disk_{2, \ast}^{\mathcal{G}} \to \mathcal{V}$. Similarly, by a \emph{boundary condition} we mean a symmetric monoidal functor $\mathcal{F} \colon \Disk_{2, \partial}^{\mathcal{G}} \to \mathcal{V}$.
\end{definition}
In Section~\ref{Sec: Defects} we will give an algebraic characterisation of point defects and boundary conditions.

\begin{remark}
Unless otherwise stated, we will usually work with trivial boundary conditions in this paper, meaning that we use the same disk algebra for a disk with empty boundary, as for a disk with non-empty boundary.
\end{remark}

\subsection{The categorical case}\label{Sec: Categorical}

From now on we specialise to 2-dimensional manifolds and tangential structures of the form $\G \times SO(2)$, where $\G$ is a finite group. Throughout this paper we will work with factorisation homology with values in the $(2,1)$-category $\Prc$ of $k$-linear compactly generated presentable categories with compact and cocontinuous functors and natural isomorphisms between them, meaning that we will not use any non-invertible 2-morphisms. For us $k$ will always be an algebraically closed field of characteristic 0, usually $k = \mathbb{C}$. Recall that an object $c$ in a $k$-linear category $\Ca$ is \emph{compact} if the functor $\Hom(c,-)$ preserves filtered colimits. A category $\Ca$ is \emph{compactly generated} if every object can be written as a filtered colimit of compact objects and a functor is \emph{compact} if it preserves compact objects. We refer the reader to~\cite[Section 3]{bzbj} for more details on $\Prc$. \par 
Every $\infty$-functor from $\Man_2^{\G \times SO(2)}$ to $\Prc$ will factor through its homotopy 2-category  
which admits the following concrete description.
\begin{definition}
We denote by $\G\text{-}\catf{Man}_2$ the $(2,1)$-category with
\begin{itemize}
\item \emph{Objects}: Oriented 2-dimensional manifolds $\Sigma$ equipped
with a continuous map $\varphi\colon \Sigma \longrightarrow B\G$.
\item \emph{1-Morphisms}: Smooth embeddings $f\colon \Sigma_1 \longrightarrow \Sigma_2$ together with the choice of a homotopy $h \colon \varphi_1 \longrightarrow f^*\varphi_2$.
\item \emph{2-Morphisms}: A 2-morphism $(f_1,h_1)\longrightarrow (f_2,h_2)$ is given by an equivalence class of isotopies $\chi \colon f_1 \longrightarrow f_2$, together with a map $\gamma \colon \Sigma_1 \times \Delta^2 \longrightarrow B\G $ filling
\begin{equation}
\begin{tikzcd}
 & \ \  f_2^* \varphi_2 & \\
\varphi_1 \ar[ru, "h_2" ] \ar[rr, "h_1",swap] &  & f_1^* \varphi_2 \ar[lu, "\chi^*\varphi_2" ,swap]
\end{tikzcd}
\end{equation}
Two such pairs $(\chi,\gamma)$ and $(\chi',\gamma')$ are equivalent if 
there exists an isotopy of isotopies from $\chi$ to $\chi'$ (i.e. a map $\Omega \colon \Sigma_1 \times \Delta^2 \longrightarrow \Sigma_2$ filling the bottom in Diagram~\eqref{Eq: simplex}) and a map $\Gamma \colon \Sigma \times \Delta^3 \longrightarrow B\G $ filling
\begin{equation}\label{Eq: simplex}
\begin{tikzcd}
 & & & \varphi_1 & & \\ 
 & & &  & &  \\
 & & & & & \ar[lluu, leftarrow, "h_2" description] f_2^*\varphi_2  \\ 
f_1^*\varphi_2 \ar[rrruuu,"h_1" description, leftarrow] \ar[rrd,"\chi^*\varphi_2" description] \ar[rrrrru,"{\chi'}^*\varphi_2" description, near start] & & & & & \\
 & & f_2^*\varphi_2 \ar[uuuur, crossing over, leftarrow, "h_2" description, near end] \ar[rrruu]& & & \\
\end{tikzcd}
\end{equation}
where the faces are labeled with the various maps which are part of the morphisms.  
\end{itemize}
We denote the corresponding disk category by $\G\text{-}\Disk_2$.
\end{definition} 

\begin{remark}
Similarly, there are truncated versions $\G\text{-}\catf{Man}_{2,\ast}$ and $\G\text{-}\catf{Man}_{2,\partial}$ of the categories $\Man_{2,\ast}^{\G\times SO(2)}$
and $\Man_{2,\partial}^{\G \times SO(2)}$ introduced above. 
\end{remark}

One reason to work with $\Prc$ is that it is a closed symmetric monoidal $(2,1)$-category under the 
Deligne-Kelly tensor product $\boxtimes$. In particular, the tensor product $\boxtimes$ preserves sifted colimits in each variable, see~\cite[Proposition 3.5]{bzbj}. For any two objects $\mathcal{C}, \mathcal{D} \in \Prc$, the 
Deligne-Kelly tensor product $\mathcal{C} \boxtimes \mathcal{D} \in \Prc$ of $\mathcal{C}$ and $\mathcal{D}$ 
is characterised via the natural equivalence
$$
\Prc[\mathcal{C} \boxtimes \mathcal{D},\mathcal{E}] \cong \catf{Bil}_c[\mathcal{C},\mathcal{D};\mathcal{E}]  \ \ ,
$$
where $\catf{Bil}_c[\mathcal{C},\mathcal{D};\mathcal{E}]$ is the category of $k$-bilinear functors from $\mathcal{C} \times \mathcal{D}$ to $\mathcal{E}$, preserving colimits  in each variable separately. 

\begin{definition}
A tensor category $\mathcal{A}$ in $\Prc$ is \emph{rigid} if all compact objects of $\mathcal{A}$ are left and right dualisable.
\end{definition}

\begin{definition}
A \emph{balancing} is a family of natural isomorphisms $(\theta_V: V \to V)_{V \in \mathcal{A}}$, such that $\theta_{1_\mathcal{A}} = \id_{1_\mathcal{A}}$, and so that it is compatible with the braiding $\sigma$ of $\mathcal{A}$:
$\theta_{V \otimes W} = \sigma_{W,V} \circ \theta_W \otimes \theta_V \circ \sigma_{V,W}: V \otimes W \to V \otimes W$, graphically we depict this compatibility as follows:
\begin{equation}\label{Eq: balancing}
	\tikzfig{balancing}
\end{equation}
A \emph{balanced tensor category} is then a braided tensor category equipped with a balancing.
\end{definition}

By a result of Salvatore and Wahl~\cite{salvatorewahl}, the 2-category of framed $\mathsf{E}_2$-algebras (or equivalently $\Disk_2^{SO(2)}$-algebras) in $\Prc$ can be identified with the 2-category of balanced braided tensor categories $\bBr$.  We also recall that a ribbon category in $\Prc$ is a rigid balanced braided tensor category so that the balancing maps satisfy $\theta_{V^\vee} = (\theta_V)^\vee$. One can show that in this case giving a balancing is equivalent to giving a pivotal structure, see e.g.~\cite[Appendix A.2]{Ctrace}. Finally, a $\G\text{-}\Disk_2$-algebra is described by a balanced braided tensor category with $\G$-action. 

\begin{definition}
Let $\mathcal{A}$ be a balanced tensor category. A \emph{$\G$-action on 
$\mathcal{A}$} is a (2-)functor $\vartheta\colon \star \DS \G \longrightarrow \star\DS \Aut_{\bBr}(\mathcal{A})$ from the category with one object and $\G$ as automorphisms to the 2-category with one object, balanced braided automorphisms\footnote{A braided automorphism is \emph{balanced} if it preserves $\theta$.} of
$\mathcal{A}$ as 1-morphisms and natural transformations as 2-morphisms. In more details, the action consists of an auto-equivalence $\vartheta(\g)\colon \mathcal{A} \to \mathcal{A}$ for each $\g \in \G$, and for each composable pair $\g_i,\g_j \in \G$ we have a natural isomorphism $c_{ij}\colon \vartheta(\g_i\g_j) \xrightarrow{\cong} \vartheta(\g_1)\vartheta(\g_2)$ satisfying the usual associativity axiom. 
\end{definition}

Our main example will be constructed from Dynkin diagram automorphisms acting on the representation categories of quantum groups, see Section~\ref{Ex: Aut-extension}. 

\subsubsection{Excision for manifolds with $\G$-bundles}\label{sec: excision D-mfd} 

Consider an object $(\Sigma,\varphi)$ in $\G\text{-}\catf{Man}_2$, where $\Sigma$ is an oriented 2-manifold and $\varphi \colon \Sigma \to B\G$ a continuous map. Let $\Sigma = \Sigma_- \cup_{\Sigma_0} \Sigma_+$ be a collar-gluing and $\theta \colon \Sigma_0 \cong N \times (-1,1)$ a diffeomorphism of oriented manifolds. Notice that when using excision to compute factorisation homology on $(\Sigma,\varphi)$, the restriction $\varphi|_{N \times (-1,1)}$ is \emph{not} required to be constant along the interval $(-1,1)$, though it will be homotopic to the constant map. For the cases of interest to us, making this homotopy compatible with the collar-gluing will introduce a $\G$-twist in the action featuring in excision. We illustrate this last point with an example which will be relevant later on:

\begin{example}\label{Ex:Excision D-mfd}
Assume that the map $\varphi$ is such that its restriction $\varphi|_{\Sigma_- \setminus \Sigma_0}$ as well as $\varphi|_{\Sigma_+ \setminus \Sigma_0}$ agree with the constant map to the base point $\ast$ of $B\G$. Furthermore, let us fix a diffeomorphism $\theta \colon \Sigma_0 \xrightarrow{\cong}  N \times (-1,1)$ of oriented manifolds. Here, $N$ is the codimension 1 submanifold determined by the given collar-gluing, see Figure \ref{fig:collar-gluing}. We choose $\varphi$ such that its pullback to $N \times (-1,1)$ is given by
\begin{align*}
(\theta^{-1})^*\varphi(n,s) = 
\begin{cases}
\ast, & \text{for}~s \notin (-\tfrac{1}{2},\tfrac{1}{2}) \\
\gamma_{\g^{-1}}(s + \frac{1}{2}), & \text{for}~s \in (-\tfrac{1}{2},\tfrac{1}{2})
\end{cases}
\end{align*}
for all $n \in N$, as illustrated in Figure \ref{fig:excisionDmfd}. Here, $\gamma_{\g^{-1}}\colon [0,1] \longrightarrow B\G$ is the loop corresponding to the inverse of a given group element $\g \in \G$.
\begin{figure}[b]
\vspace{0.25cm}
\begin{subfigure}[t]{0.5\textwidth}
\centering
	\begin{overpic}[scale=0.4
		,tics=10]
		{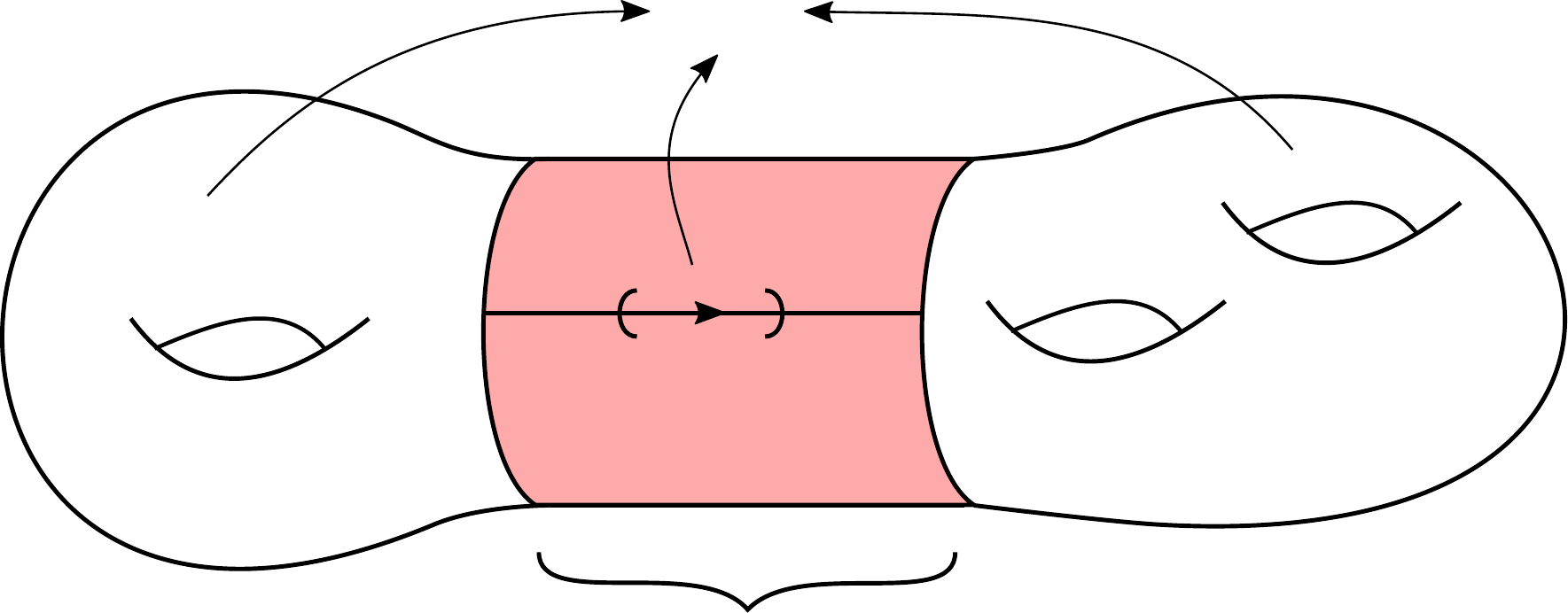}
		\put(35,-7){$N \times (-1,1)$}
		\put(7.5,7.5){$\Sigma_- \setminus \Sigma_0$}
		\put(77.5,12.5){$\Sigma_+ \setminus \Sigma_0$}
		\put(42.5,40){$B\G$}
		\put(20,35){$\ast$}
		\put(77.5,35){$\ast$}
		\put(45,25){$\gamma_{\g^{-1}}$}
		\put(35,12.5){$-\frac{1}{2}$}
		\put(48,12.5){$\frac{1}{2}$}
	\end{overpic} 
	\vspace{0.5cm} 
	\caption{The map $\varphi$ on a collar-gluing.}
\label{fig:excisionDmfd}
\end{subfigure}
\begin{subfigure}[t]{0.5\textwidth}
\vspace{-1.5cm}
\centering
	\begin{overpic}[scale=0.5
		,tics=10]
		{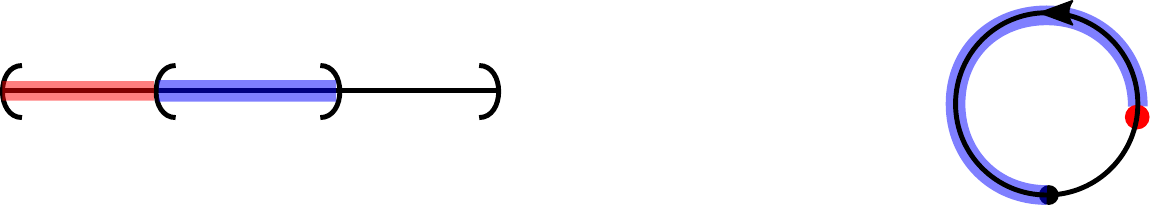}
		\put(-15,8.5){$N~\times$}
		\put(-5,0){$-1$}
		\put(10,0){$-\frac{1}{2}$}
		\put(27.5,0){$\frac{1}{2}$}
		\put(41,0){$1$}
		\put(90,-5){$\ast$}
		\put(102.5,7.5){\textcolor{red}{$\gamma_{\g^{-1}}(t_0)$}}
		\put(75,22.5){\textcolor{blue}{$\gamma_{\g^{-1}}(t \geq t_0)$}}
		\put(47,8.5){$,~~H(s,t_0) = $}
	\end{overpic}  
	\vspace{1cm}
	\caption{Sketch of the homotopy $H$ for some fixed $t_0 \in [0,1]$.}
\label{fig:htpyExcisionDmfd}
\end{subfigure}
\caption{ }
\end{figure}
We then extend $\theta$ to an equivalence of $\G$-manifolds, where the collar $N \times (-1,1)$ is equipped with the constant map to $B\G$. The equivalence is established using a homotopy $H \colon  (\theta^{-1})^*\varphi|_{\Sigma_0} \Rightarrow \ast$, which is given by $\gamma_{\g^{-1}}$ for every point in $N \times (-1,-\tfrac{1}{2}]$, continues the loop $\gamma_{\g^{-1}}$ to its end point on $N \times (-\tfrac{1}{2},\tfrac{1}{2})$ and is constant on $N \times [\tfrac{1}{2},1)$, as sketched in Figure \ref{fig:htpyExcisionDmfd}. Hence, we get an equivalence 
$$
\int_{(\Sigma_0,\varphi|_{\Sigma_0})} \mathcal{A} \cong \int_{N \times (-1,1)} \mathcal{A} \eqqcolon \mathcal{C} \ \ .
$$  
Given a balanced tensor category $\mathcal{A}$ with a $\G$-action, we now want to deduce the module structures featuring in the excision formulae for 
$\int_{(\Sigma,\varphi)} \mathcal{A}$. Denote by $\Sigma_-^\ast$ and $\Sigma_+^\ast$ two objects in $\G\text{-}\catf{Man}_2$, whose underlying manifolds agree with $\Sigma_-$ and $\Sigma_+$, but whose maps to $B\G$ are assumed to be constant. The value of factorisation homology on these manifolds naturally defines module categories $\mathcal{M}_-$ and $\mathcal{M}_+$ over the $E_1$-algebra $\mathcal{C}$. In order to obtain an explicit description of the module structures obtained by excision, note that the homotopy $H$ from above can be used to construct an equivalence $\theta_+ \colon \Sigma_+ \xrightarrow{\cong} \Sigma_+^\ast$ with homotopy $H_+ \colon (\theta^{-1})^*\varphi|_{\Sigma_+}\Rightarrow \ast$ in $\G\text{-}\catf{Man}_2$. We use this equivalence to identify 
$\int_{(\Sigma_+,\varphi|_{\Sigma_+})} \mathcal{A} \cong \int_{\Sigma_+^*} \mathcal{A}$ as categories. This equivalence can be promoted to an equivalence of module categories, i.e.~the following diagram commutes:
\begin{center}
\begin{tikzcd}
N \times (-1,1) \sqcup \Sigma_+ \arrow[r] \arrow[d,"\id \sqcup (\theta_+{,} H_+)",swap] & \Sigma_+ \arrow[d,"(\theta_+{,}H_+)"] \\
N \times (-1,1) \sqcup \Sigma_+^\ast \arrow[r] & \Sigma_+^\ast 
\end{tikzcd}
\end{center}
We can see that the action of $\mathcal{C}$ on $\int_{(\Sigma_+^*,\varphi|_{\Sigma_+})}\mathcal{A}$ is precisely given by the $\mathcal{C}$-module structure of $\mathcal{M}_+$. On the contrary, the situation is a bit more involved for the $\mathcal{C}$-module structure of $\int_{(\Sigma_-, \varphi|_{\Sigma_-})} \mathcal{A}$: We cannot simply identify $\Sigma_- \cong \Sigma_-^\ast$ via $H$ since the homotopy is not constant near $N \times \{-1\}$. However, we can construct an equivalence $\theta_-\colon \Sigma_- \xrightarrow{\cong} \Sigma_-^\ast$ from a homotopy $H_-$, which is defined by using the loop $\gamma_\g$ similarly to how we used $\gamma_{\g^{-1}}$ above. This gives rise to an identification $\int_{(\Sigma_-,\varphi|_{\Sigma_-})} \mathcal{A} \cong \int_{\Sigma_-^\ast}\mathcal{A}$ together with a weakly commuting diagram 
\begin{equation}
	\begin{tikzcd}
	\Sigma_- \sqcup N \times (-1,1) \ar[d, "(\theta_-{,} H_-) \sqcup (\id{,}\gamma_{\g})",swap] \ar[r] & \Sigma_- \ar[d,"(\theta_-{,} H_-)"] \\ 
	\Sigma_-^\ast \sqcup N \times (-1,1) \ar[r] & \Sigma_-^\ast 
	\end{tikzcd}
\end{equation}  
in $\G\text{-}\Man_2$. From the horizontal maps we deduce that the module structure relevant for excision is obtained by twisting by the $\G$-action on $\Ca$:
\begin{align}
\act_-^\g \colon \mathcal{M}_- \boxtimes \mathcal{C} \xrightarrow{\id_{\mathcal{M}_-}\boxtimes \vartheta(\g)} \mathcal{M}_- \boxtimes \mathcal{C} \xrightarrow{\act_-} \mathcal{M}_-  \ \ .
\end{align}
We write $\mathcal{M}_{-,\g}$ for this module category. Combining everything we arrive at 
\begin{align}\label{Eq: twisted excision}
\int_{(\Sigma,\varphi)} \mathcal{A} \cong \mathcal{M}_{-,\g} \underset{\mathcal{C}}{\boxtimes} \mathcal{M}_+ \ \ .
\end{align} 
\end{example}

\begin{remark}
Notice that alternatively we could have chosen a trivialisation of $\Sigma_0$ which extends to $\Sigma_-$, instead of $\Sigma_+$, which would have resulted in a twisting of $\mathcal{M}_+$ by $\g^{-1}$. In this sense the module structures featuring in excision for $\G$-structured oriented 2-manifolds are not unique, though the value of the relative tensor product is. 
\end{remark}

\subsection{Actions of diagram automorphisms and their quantisation}\label{Ex: Aut-extension}

For applications to quantum physics, we will be mostly interested in factorisation homology for the ribbon category $\Rep_q(G)$. In this section we will show that $\Rep_q(G)$ admits an $\Out(G)$-action, which can be seen as a quantisation of the $\Out(G)$-symmetry in gauge theory. \par 
The outer automorphism group $\Out(G)$ of $G$ is finite and can be identified with the group of Dynkin diagram automorphisms. Concretely, one finds for the non-trivial outer automorphism groups 
\begin{center}
	\begin{tabular}{|c||c|c|c|c|}
		\hline {Type} & {$A_n$ , $n\geq 2$}& {$D_n$ , $n>4$} & {$D_4$} &
		{$E_6$}
		\\[4pt] \hline
		$\Out(G)$ & $\Z_2$ & $\Z_2$ & $S_3$ & $\Z_2$
		\\\hline
	\end{tabular}
\end{center} The identification of outer automorphisms and Dynkin digram automorphisms provides an explicit splitting $\Out(G) \longrightarrow \Aut(G)$ and allows us to write down the short exact sequence 
\begin{align}
1\longrightarrow G \longrightarrow G\rtimes \Out(G) \longrightarrow \Out(G) \longrightarrow 1
\end{align}
containing the semi-direct product. \par 
The category $\Rep(G)$ of $G$-representations is a symmetric monoidal ribbon category and hence in particular a framed $\mathsf{E}_2$-algebra. The finite group $\Out(G)$ acts naturally on the category $\Rep(G)$ by pulling back representations along the inverse and this symmetry extends to the representation category of the corresponding quantum group, see Proposition \ref{prp:OutGactionRephG} below. \par 
We will use the following notation and conventions. We consider a finite-dimensional simple complex Lie algebra $\mathfrak{g}$ with Cartan matrix $(a_{ij})_{1 \leq i,j \leq n}$. We fix a Cartan subalgebra $\mathfrak{h} \subset \mathfrak{g}$ and select a set of simple roots $\Pi = \{\alpha_1, \dots, \alpha_n\}$. We write $\Lambda$ for the weight lattice and we choose a symmetric bilinear form $(\cdot,\cdot)$ on $\Lambda$ such that $(\alpha_i,\alpha_j) = a_{ij}$. For the rest of this paragraph we will restrict our attention to Lie algebras with Dynkin diagrams of type $A_n$ ($n \geq 2$), $D_n$ ($n \geq 4$), or $E_6$, since these are the only cases for which we have non-trivial Dynkin diagram automorphisms. \par 
The formal quantum group $U_\hbar(\mathfrak{g})$ is a Hopf algebra deformation of the universal enveloping algebra $U(\mathfrak{g})$ over $\C[[\hbar]]$ with generators $\{H_{\alpha_i}, X^{\pm}_{\alpha_i} \}_{\alpha_i \in \Pi}$, subjected to certain relations, see for example \cite[Section 6.5]{quantumgroups} for details. In order to define positive and negative root vectors, we fix a reduced decomposition $\omega_0 = s_{i_1} s_{i_2} \dots s_{i_N}$ of the longest element $\omega_0$ in the Weyl group of $\mathfrak{g}$. The positive and negative root vectors are then defined as
$$
X_{\beta_r}^{\pm} = T_{i_1}T_{i_2} \dots T_{i_{r-1}} X^{\pm}_{\alpha_{i_r}}
$$
in $U_\hbar(\mathfrak{g})$ by acting on the generators with elements $T_i \in \mathfrak{B}_\mathfrak{g}$ of the braid group associated to $\mathfrak{g}$~\cite[Section 8.1 ]{quantumgroups}. The formal quantum group $U_\hbar(\mathfrak{g})$ is quasi-triangular with universal R-matrix given by the multiplicative formula \cite[Theorem 8.3.9]{quantumgroups}
$$
\mathcal{R} = \Omega \widehat{\mathcal{R}}, \quad \Omega = \prod_{\alpha_i \in \Pi} e^{\hbar ( {a_{ij}^{-1} H_{\alpha_i} \otimes H_{\alpha_j} )}}, \quad \widehat{\mathcal{R}} = \prod_{\beta_r} \widehat{\mathcal{R}}_{\beta_r} \ \ ,
$$
where the order in the second product is such that the $\beta_r$-term is to the left of the $\beta_s$-term if $r > s$, and $\widehat{\mathcal{R}}_{\beta_r} = \exp_q((1-q^{-2}) X^+_{\beta_r} \otimes X^-_{\beta_r})$ for $q=\exp(\hbar)$. It is shown in \cite[Corollary 8.3.12]{quantumgroups} that $\mathcal{R}$ is independent of the chosen reduced decomposition of $\omega_0$. We denote by $\Rep_\hbar(G)$ the category of topologically free left modules over $U_\hbar(\mathfrak{g})$ of finite rank. This tensor category comes with a braiding defined via the universal R-matrix $\mathcal{R}$ of $U_\hbar(\mathfrak{g})$.

\begin{proposition}\label{prp:OutGactionRephG}
The braided tensor category $\Rep_\hbar(G)$ admits a left action of $\Out(G)$.
\end{proposition}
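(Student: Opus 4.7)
The plan is to construct the action at the level of the Hopf algebra $U_\hbar(\mathfrak{g})$ and then induce it on the category of modules by pullback. An outer automorphism $\kappa \in \Out(G)$ corresponds to a Dynkin diagram automorphism, i.e.\ a permutation of the index set $\{1,\dots,n\}$ preserving the Cartan matrix: $a_{\kappa(i)\kappa(j)} = a_{ij}$.

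First, I would define an algebra endomorphism $\Phi_\kappa$ of $U_\hbar(\mathfrak{g})$ by setting $\Phi_\kappa(H_{\alpha_i}) = H_{\alpha_{\kappa(i)}}$ and $\Phi_\kappa(X^\pm_{\alpha_i}) = X^\pm_{\alpha_{\kappa(i)}}$ on the Chevalley generators. The Serre-type defining relations are expressed purely through the Cartan matrix entries, so the invariance $a_{\kappa(i)\kappa(j)} = a_{ij}$ guarantees that $\Phi_\kappa$ is well-defined; $\Phi_{\kappa^{-1}}$ provides an inverse. A direct check on generators shows that $\Phi_\kappa$ preserves the coproduct, counit, and antipode, so it is a Hopf algebra automorphism.

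The key step is to verify that $\Phi_\kappa \otimes \Phi_\kappa$ fixes the universal R-matrix $\mathcal{R} = \Omega \widehat{\mathcal{R}}$. For the Cartan part $\Omega$, invariance of the inverse Cartan matrix under $\kappa$ together with the fact that the exponent is a sum over all pairs $(i,j)$ implies $(\Phi_\kappa \otimes \Phi_\kappa)(\Omega) = \Omega$ by reindexing. For the nilpotent part $\widehat{\mathcal{R}} = \prod_{\beta_r} \widehat{\mathcal{R}}_{\beta_r}$ I would exploit the independence of $\widehat{\mathcal{R}}$ from the chosen reduced decomposition of $\omega_0$ \cite[Corollary 8.3.12]{quantumgroups}: applying $\Phi_\kappa \otimes \Phi_\kappa$ to the product associated to $\omega_0 = s_{i_1}\cdots s_{i_N}$ produces the product associated to $\omega_0 = s_{\kappa(i_1)}\cdots s_{\kappa(i_N)}$, which is again reduced because $\kappa$ induces a Weyl group automorphism. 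The main subtlety here is establishing that $\Phi_\kappa$ intertwines the quantum braid group action, i.e.\ $\Phi_\kappa \circ T_i = T_{\kappa(i)} \circ \Phi_\kappa$, so that $\Phi_\kappa(X^\pm_{\beta_r}) = X^\pm_{\kappa(\beta_r)}$; this reduces to a direct computation on generators using Lusztig's explicit formulas for $T_i$, which are again built only from the Cartan matrix.

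Finally, the auto-equivalence $\vartheta(\kappa)$ of $\Rep_\hbar(G)$ is defined by pullback along $\Phi_{\kappa^{-1}}$. Since $\Phi_\kappa$ is a Hopf algebra automorphism preserving $\mathcal{R}$, pullback is a $\C[[\hbar]]$-linear braided tensor equivalence, and preservation of the ribbon (hence balancing) element follows from $(\Phi_\kappa \otimes \Phi_\kappa)(\mathcal{R}) = \mathcal{R}$ together with $\Phi_\kappa \circ S = S \circ \Phi_\kappa$. The identity $\Phi_{\kappa_1 \kappa_2} = \Phi_{\kappa_1} \circ \Phi_{\kappa_2}$ holds strictly on generators and hence on all of $U_\hbar(\mathfrak{g})$, so the induced action on $\Rep_\hbar(G)$ is strict; the coherence isomorphisms $c_{ij}$ can be chosen to be identities and associativity is automatic. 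I expect the most delicate step to be the invariance of the nilpotent factor $\widehat{\mathcal{R}}$, specifically the compatibility of the diagram automorphism with the quantum braid group operators used to define the root vectors $X^\pm_{\beta_r}$.
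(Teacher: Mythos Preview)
Your proposal is correct and follows essentially the same strategy as the paper: define the automorphism on the Chevalley generators, induce the categorical action by pullback along the inverse, and verify invariance of $\mathcal{R}$ by appealing to the independence of $\widehat{\mathcal{R}}$ from the chosen reduced decomposition of $\omega_0$. The one substantive difference lies in how you show that $\kappa$ sends a reduced decomposition of $\omega_0$ to another: the paper does this case-by-case, writing down explicit expressions $\omega_0 = (ab)^h$ (with $a,b$ products of commuting simple reflections) for types $A_n$, $D_n$, $E_6$ and observing that $\kappa$ permutes the factors; you instead argue abstractly that a diagram automorphism induces a Weyl-group automorphism, which necessarily fixes the unique longest element and preserves reducedness. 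Your route is cleaner and avoids casework, and you are also more explicit than the paper about the intertwining $\Phi_\kappa \circ T_i = T_{\kappa(i)} \circ \Phi_\kappa$ needed to control the root vectors $X^\pm_{\beta_r}$, a point the paper uses without comment. Note that the paper treats compatibility with the balancing as a separate proposition rather than folding it into this one.
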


\begin{proof}
The outer automorphisms $\Out(G)$ can be identified with the automorphism group $\Aut(\Pi)$ of the Dynkin diagram of $\mathfrak{g}$. An element $ \kappa \in \Aut(\Pi)$ acts on the generators of $U_\hbar(\mathfrak{g})$ via
$$
H_{\alpha_i} \mapsto H_{\alpha_{\kappa(i)}}, \quad X^{\pm}_{\alpha_i} \mapsto X^{\pm}_{\alpha_{\kappa(i)}}.
$$
We thus get an action $\rho$ of $\Out(G)$ on the tensor category $\Rep_\hbar(G)$ defined by pulling back a representation along the inverse automorphism, i.e.~$\rho(\kappa)(X) = (\kappa^{-1})^*X$, for any $X \in \Rep_\hbar(G)$. It is left to show that the action preserves the braiding. The action of $\kappa$ on a positive, respectively negative, root vector is given by 
$$
\kappa.X^\pm_{\beta_r} = T_{\kappa(i_r)} \dots T_{\kappa(i_{r-1})} X^\pm_{\alpha_{\kappa(i_r)}} \ \ .
$$
We now make use of the following explicit expressions for $\omega_0$, details can be found for example in \cite[Section 3.19]{Humphreys}. First, divide the vertices of the Dynkin diagram into two nonempty disjoint subsets $S$ and $S'$, so that in each subset the corresponding simple reflections commute. Let $a$ and $b$ be the products of the simple reflections in $S$ and $S'$, respectively. For $A_n$ ($n$ odd), $D_n$ ($n \geq 4$) and $E_6$ we can set $\omega_0 = (ab)^{h}$, where $h$ is the respective Coxeter number. Whereas for $A_n$ ($n$ even), $\omega_0$ can be represented either as $\omega_0 = (ab)^{\frac{n}{2}}a$ or as $\omega_0 = b(ab)^{\frac{n}{2}}$. We thus see that $\kappa$ sends a given reduced decomposition of the longest Weyl group element $\omega_0$ to another reduced decomposition of $\omega_0$. But since the R-matrix is independent of the chosen reduced decomposition the result follows.
\end{proof}

\begin{proposition}
The action of $\Out(G)$ on $\Rep_\hbar(G)$ is compatible with the balancing automorphism of $\Rep_\hbar(G)$.
\end{proposition}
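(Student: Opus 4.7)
The plan is to identify the balancing on $\Rep_\hbar(G)$ with the action of the ribbon element $\nu \in U_\hbar(\mathfrak{g})$ and then show that $\nu$ is fixed by every Dynkin diagram automorphism $\kappa \in \Out(G)$. Concretely, recall that $\Rep_\hbar(G)$ carries a canonical ribbon structure whose twist is given by $\theta_V(v) = \nu^{-1}\cdot v$, where
\begin{align}
\nu = u\, K_{-2\rho}, \qquad u = m \circ (S\otimes \id)(\mathcal{R}^{21}),
\end{align}
with $K_{-2\rho}$ the group-like element associated to minus twice the half-sum $\rho = \tfrac{1}{2}\sum_{\beta>0}\beta$ of the positive roots; see e.g.\ \cite[Section 8.3]{quantumgroups}. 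In these terms the compatibility we must check amounts to the identity $\kappa(\nu)=\nu$ in $U_\hbar(\mathfrak{g})$, since once we have this, functoriality of pullback along $\kappa^{-1}$ gives $\rho(\kappa)(\theta_V)=\theta_{\rho(\kappa)(V)}$, which is exactly the requirement that $\rho(\kappa)$ is a balanced (in particular braided) automorphism.

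First I would show that $\kappa(u)=u$. This is an immediate consequence of the previous proposition: since $\kappa$ preserves the universal R-matrix $\mathcal{R}$, it preserves $\mathcal{R}^{21}$, and since $\kappa$ is a Hopf algebra automorphism it commutes with the antipode $S$ and the multiplication $m$. Hence $\kappa$ preserves the Drinfeld element $u$.

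Next I would verify that $\kappa(K_{-2\rho}) = K_{-2\rho}$. The action of $\kappa$ on the Cartan part is by $H_{\alpha_i} \mapsto H_{\alpha_{\kappa(i)}}$, so on the weight lattice it permutes the simple roots. Since any Dynkin diagram automorphism permutes the full set of positive roots (as seen in the proof of Proposition~\ref{prp:OutGactionRephG} via its action on reduced expressions of $\omega_0$), the element $2\rho = \sum_{\beta>0}\beta$ is fixed, and therefore so is $K_{-2\rho}$. Combining the two steps gives $\kappa(\nu) = \kappa(u)\kappa(K_{-2\rho}) = u K_{-2\rho} = \nu$, which finishes the argument.

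The only mildly subtle point is that the ribbon element involves the antipode and a Cartan element built from $\rho$, so one needs both the R-matrix invariance from the previous proposition and the permutation of positive roots to conclude; neither ingredient alone suffices, but together they trivialise the claim.
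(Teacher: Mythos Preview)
Your argument is correct and follows essentially the same route as the paper: both proofs identify the balancing with the ribbon element, invoke the R-matrix invariance from Proposition~\ref{prp:OutGactionRephG} to conclude that the Drinfeld element $u$ is fixed, and then check separately that the Cartan factor attached to $\rho$ is $\kappa$-invariant. The only cosmetic difference is in this last step: the paper writes the Cartan part as $\exp(\hbar H_\rho)$ with $H_\rho = \sum_i \mu_i H_{\alpha_i}$, $\mu_i = \sum_j a_{ij}^{-1}$, and argues directly from the $\kappa$-invariance of the Cartan matrix that $\mu_{\kappa(i)} = \mu_i$; you instead argue that $\kappa$ permutes the positive roots and hence fixes $2\rho = \sum_{\beta>0}\beta$. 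These are two equally short ways to the same conclusion.
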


\begin{proof}
The balancing in $\Rep_\hbar(G)$ is given by acting with the ribbon element $c_\hbar = \exp(\hbar H_{\rho}) u_\hbar$ of $U_\hbar(\mathfrak{g})$, see \cite[Section 8.3.F]{quantumgroups}. Here, $H_\rho = \sum_{i = 1}^n \mu_i H_{\alpha_i}$ with coefficients $\mu_i = \sum_{j = 1}^n a^{-1}_{ij}$ and $u_\hbar = m_\hbar(S_\hbar \otimes \id)\mathcal{R}_{2,1}$ with $m_\hbar$ and $S_\hbar$ the multiplication and antipode in $U_\hbar(\mathfrak{g})$ respectively. It follows from Proposition \ref{prp:OutGactionRephG} that a Dynkin diagram automorphism $\kappa \in \Aut(\Pi)$ preserves the element $u_\hbar$. So it is left to show that $\kappa$ preserves the element $H_{\rho}$. Since the Cartan matrix is invariant under the Dynking diagram automorphism, we have $\mu_i = \sum_{j = 1}^n a^{-1}_{i,j} = \sum_{j = 1}^n a^{-1}_{\kappa(i),\kappa(j)} = \sum_{j = 1}^n a^{-1}_{\kappa(i),j} = \mu_{\kappa(i)}$ and thus $\kappa.H_{\rho} = H_\rho$.
\end{proof}

Let $q \in \mathbb{C}^\times$ be a non-zero complex number which is not a root of unity and let $U_q(\mathfrak{g})$ be the corresponding  specialisation of the rational form of $U_\hbar(\mathfrak{g})$. A precise definition of $U_q(\mathfrak{g})$ can be found e.g.~in \cite[Section 9]{quantumgroups}. We denote by $\Rep_q(G)$ the category of locally finite $U_q(\mathfrak{g})$-modules of type 1. Strictly speaking, $U_q(\mathfrak{g})$ is not quasi-triangular. However, it's representation category admits a braiding \cite[Section 10.1.D]{quantumgroups}. On a representation $V \otimes V' \in \Rep_q(G)$, the braiding is defined by the so-called quasi R-matrix $\Theta_{V,V'} = \tau \circ E_{V, V'} \widehat{\mathcal{R}}_{V,V'}$, where $\tau$ is the map swapping the tensor factors and $E_{V,V'}$ is an invertible operator on $V \otimes V'$ acting on the subspace $V_\lambda \otimes V'_\mu$ by the scalar $q^{(\lambda,\mu)}$, for $\lambda,\mu \in \Lambda$. Moreover, the standard ribbon element for $U_q(\mathfrak{g})$ acts on $V_\lambda$ as the constant $q^{-(\lambda,\lambda)-2(\lambda,\rho)}$ with $\rho$ the half-sum of positive roots, giving rise to the balancing in $U_q(\mathfrak{g})$. Hence, we get the $q$-analog of Proposition \ref{prp:OutGactionRephG}:

\begin{proposition}
The braided balanced tensor category $\Rep_q(G)$ admits a left action of $\Out(G)$.
\end{proposition}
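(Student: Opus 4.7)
The plan is to imitate the proofs of Proposition~\ref{prp:OutGactionRephG} and the subsequent proposition on the balancing, adapted to the setting of $U_q(\mathfrak{g})$ and its quasi R-matrix $\Theta$. The key observation is that, just as in the formal case, the defining relations of $U_q(\mathfrak{g})$ depend only on the Cartan matrix, which is invariant under any Dynkin diagram automorphism $\kappa \in \Aut(\Pi) \cong \Out(G)$. Thus $\kappa$ extends to a Hopf algebra automorphism of $U_q(\mathfrak{g})$ permuting the Chevalley generators. Pullback along $\kappa^{-1}$ then furnishes a monoidal auto-equivalence $\rho(\kappa)$ of $\Rep_q(G)$, and $\kappa \mapsto \rho(\kappa)$ assembles into a left action of $\Out(G)$.

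Next, one needs to show that $\rho(\kappa)$ preserves both the braiding and the balancing. For the braiding, recall that on $V \otimes V'$ it is defined via the quasi R-matrix $\Theta_{V,V'} = \tau \circ E_{V,V'} \widehat{\mathcal{R}}_{V,V'}$. The $\widehat{\mathcal{R}}_{V,V'}$ factor is built from a product over positive roots of exponentials in the root vectors, exactly as in the formal case, and the argument of Proposition~\ref{prp:OutGactionRephG} applies verbatim: $\kappa$ sends any reduced decomposition of the longest Weyl group element $\omega_0$ to another reduced decomposition, and $\widehat{\mathcal{R}}$ is independent of that choice. For the diagonal factor $E_{V,V'}$, which acts on $V_\lambda \otimes V'_\mu$ by $q^{(\lambda,\mu)}$, one uses that Dynkin diagram automorphisms act on the weight lattice $\Lambda$ preserving the symmetric bilinear form $(\cdot,\cdot)$, since $(\alpha_i,\alpha_j) = a_{ij}$ and the Cartan matrix is $\kappa$-invariant. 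The weight spaces of $\rho(\kappa).V$ relate to those of $V$ by the induced permutation of $\Lambda$, from which invariance of $E_{V,V'}$ under $\rho(\kappa) \boxtimes \rho(\kappa)$ follows.

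For the balancing, the standard ribbon element acts on a weight space $V_\lambda$ by the scalar $q^{-(\lambda,\lambda) - 2(\lambda,\rho)}$. The half-sum of positive roots $\rho$ is fixed by every Dynkin diagram automorphism, since these permute the set of positive roots, and the bilinear form is $\kappa$-invariant as above. Hence the scalar is unchanged under $\rho(\kappa)$, and the balancing is preserved. Combining these steps yields the claim.

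The main obstacle I anticipate is the careful bookkeeping relating the weight decomposition of the pulled-back module $\rho(\kappa).V$ to that of $V$, needed to make the invariance of the diagonal factor $E_{V,V'}$ precise; once this is pinned down everything else is a direct transcription of the formal case treated above.
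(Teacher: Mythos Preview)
Your proposal is correct and is precisely the argument the paper has in mind. In fact, the paper gives no explicit proof for this proposition at all: it records the formulas for the quasi R-matrix $\Theta_{V,V'} = \tau \circ E_{V,V'}\widehat{\mathcal{R}}_{V,V'}$ and for the ribbon scalar $q^{-(\lambda,\lambda)-2(\lambda,\rho)}$, and then simply states ``Hence, we get the $q$-analog of Proposition~\ref{prp:OutGactionRephG}.'' Your write-up spells out exactly the details the paper leaves implicit---the $\widehat{\mathcal{R}}$ factor via the same reduced-decomposition argument as in the formal case, the $E_{V,V'}$ factor via $\kappa$-invariance of the form on $\Lambda$, and the balancing via $\kappa$-invariance of $\rho$---so there is nothing to compare.
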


\subsection{Reconstruction theorems for module categories}
The following is a brief recollection of \cite[Section 4]{bzbj} which will allow us to compute the value of
factorisation homology explicitly in terms of module categories over certain algebras in the next section. We start by recalling that the inclusion $\emptyset \hookrightarrow \Sigma$ of the empty manifold into a surface $\Sigma$ induces a canonical functor $1_{\Prc} \cong \mathsf{Vect}_k \to \int_{\Sigma} \mathcal{A}$ on the level of factorisation homology, see Section \ref{Sec:ReviewFH}. We thus have a distinguished object $\operatorname{Dist}_\Sigma \in \int_{\Sigma} \mathcal{A}$, given as the image of $k$ under this functor. If we assume that $\Sigma$ is not closed and we choose a marked interval in its boundary, there is a natural $\mathcal{A}$-module structure on $\int_\Sigma \mathcal{A}$, induced by embedding a disk along the marked interval. In order to study the factorisation homology of the surface $\Sigma$, we wish to describe the entire category $\int_\Sigma \mathcal{A}$ internally in terms of $\mathcal{A}$. To that end, following \cite{bzbj}, we will apply techniques from Barr-Beck monadic reconstruction to monads arising from adjunctions of module functors of the form $\act_{\operatorname{Dist}_{\Sigma}} \colon \mathcal{A} \to \int_\Sigma \mathcal{A}$. \par 
Applying monadic reconstruction techniques to module categories was first done for fusion categories in the work of Ostrik \cite{ostrikmodulecats}, and later in the setting of finite abelian categories in \cite{dsps}. Here, we will recall its further generalisation to categories in $\Prc$, as developed in~\cite[Section 4]{bzbj}. For the remainder of this section, let $\Aa$ be an abelian rigid tensor category in $\Prc$ and $\Ma$ an abelian right $\Aa$-module category with action functor $\act \colon \Ma \boxtimes \Aa \to \Ma$. For each $m \in \Ma$, the induced functor 
$$
\act_m \colon \Aa \to \Ma, \quad \act_m(a) \coloneqq m \otimes a
$$
admits a right adjoint which we denote $\act^R_m$. For any pair of objects $m,n \in \Ma$, define the \emph{internal morphisms} from $m$ to $n$ as the object $\underline{\Hom}_\Aa(m,n)=\act_m^R(n) \in \Aa$ representing the functor $a \mapsto \Hom_{\mathcal{M}}(m \otimes a, n)$. Then, there is a natural algebra internal to $\Aa$ given by $\underline{\End}_\Aa(m) \coloneqq \underline{\Hom}_\Aa(m,m)$, which is called the \emph{internal endomorphism algebra} of $m$. For each $m \in \mathcal{M}$, we get a functor 
$$
\widetilde{\act^R_m} \colon \mathcal{M} \to (\act^R_m \circ \act_m)\operatorname{-mod}_\Aa
$$
sending an object $n \in \mathcal{M}$ to $\underline{\Hom}_\mathcal{A}(m,n)$ with canonical action $\act^R_m \circ \act_m \circ \act^R_m (n) \to \act^R_m(n)$ given by the counit of the adjunction. The monadicity theorem (see Theorem \ref{thm:reconstruction} below) then tells us when this functor is an equivalence. In order to state the theorem, we adopt the following terminology.

\begin{definition}
\normalfont
An object $m \in \Ma$ is called 
\begin{itemize}
\item an \emph{$\Aa$-generator} if $\act^R_m$ is faithful,
\item \emph{$\Aa$-projective} if $\act^R_m$ is colimit-preserving,
\item an \emph{$\Aa$-progenerator} if it is both $\Aa$-projective and an $\Aa$-generator.
\end{itemize}
\end{definition}

\begin{theorem}[{\cite[Theorem 4.6]{bzbj}}]
Let $m \in \Ma$ be an $\Aa$-progenerator. Then the functor 
$$
\widetilde{\act^R_m} \colon \Ma \xrightarrow{\cong} \underline{\End}_\Aa(m)\operatorname{-mod}_\Aa, 
$$
is an equivalence of $\Aa$-module categories, where $\Aa$ acts on the right by the tensor product.
\end{theorem}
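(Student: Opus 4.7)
The strategy is to deduce the result from the Barr--Beck monadicity theorem applied to the adjunction $\act_m \dashv \act_m^R$ between $\Aa$ and $\Ma$, followed by an identification of the resulting monad on $\Aa$ with right tensoring by $\underline{\End}_\Aa(m)$.

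First, I would set up the comparison picture. The adjunction $\act_m \dashv \act_m^R$ produces a monad $T \coloneqq \act_m^R \circ \act_m$ on $\Aa$, and $\widetilde{\act_m^R}$ factors as the comparison functor $\Ma \to T\operatorname{-mod}_\Aa$ followed by the forgetful functor. To apply Barr--Beck in the present $(2,1)$-categorical setting (compare the discussion in \cite[Section~4]{bzbj}), it suffices to check that $\act_m^R$ is conservative and preserves sifted (in fact all) colimits. Preservation of colimits is exactly the hypothesis that $m$ is $\Aa$-projective. Conservativity follows from the $\Aa$-generator hypothesis: $\act_m^R$ is faithful, and since it is also exact (being both a left and right adjoint in $\Prc$), faithfulness implies conservativity. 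Hence the comparison $\Ma \xrightarrow{\cong} T\operatorname{-mod}_\Aa$ is an equivalence.

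Second, I would identify $T$ with the monad $a \mapsto \underline{\End}_\Aa(m) \otimes a$. For any compact $a \in \Aa$ (which is both left and right dualisable by rigidity), and any $b \in \Aa$, the projection formula gives
\begin{align}
\Hom_\Aa\bigl(b, T(a)\bigr) &= \Hom_\Ma\bigl(m \otimes b, m \otimes a\bigr) \cong \Hom_\Ma\bigl(m \otimes b \otimes a^\vee, m\bigr) \\
&\cong \Hom_\Aa\bigl(b \otimes a^\vee, \underline{\End}_\Aa(m)\bigr) \cong \Hom_\Aa\bigl(b, \underline{\End}_\Aa(m) \otimes a\bigr),
\end{align}
yielding a natural isomorphism $T(a) \cong \underline{\End}_\Aa(m) \otimes a$ by the Yoneda lemma. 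Both $T$ and $\underline{\End}_\Aa(m) \otimes (-)$ preserve colimits (the former since $m$ is $\Aa$-projective, the latter since $\boxtimes$ preserves colimits in each variable), so this natural isomorphism extends from compact generators to all of $\Aa$.

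Third, I would verify that the monad structure on $T$ transported along this isomorphism coincides with the canonical monad structure on $\underline{\End}_\Aa(m) \otimes (-)$ induced by the algebra structure of $\underline{\End}_\Aa(m) = \underline{\Hom}_\Aa(m,m)$, and that the unit matches as well. This is a bookkeeping check: the multiplication $T \circ T \Rightarrow T$ arises from the counit $\act_m \circ \act_m^R \Rightarrow \id_\Ma$, while the multiplication on $\underline{\End}_\Aa(m)$ is by definition induced by the same counit composed with itself, so the two agree under the Yoneda identifications above. Composing the two equivalences then gives the desired equivalence $\Ma \simeq \underline{\End}_\Aa(m)\operatorname{-mod}_\Aa$.

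Finally, the equivalence is manifestly right $\Aa$-linear: the right $\Aa$-action on $\Ma$ commutes with $\act_m^R$ only up to tensoring by $\Aa$ on the right, and since the internal-hom representability gives $\underline{\Hom}_\Aa(m, n \otimes a) \cong \underline{\Hom}_\Aa(m,n) \otimes a$ (again by the rigidity/projection argument above), $\widetilde{\act_m^R}$ intertwines the two $\Aa$-module structures. The principal obstacle is the projection-formula step, where one must carefully reduce to compact (hence dualisable) objects and then extend the identification to all of $\Aa$ using colimit-preservation; all other steps are formal consequences of Barr--Beck and the definitions.
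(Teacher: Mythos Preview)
The paper does not supply its own proof of this theorem; it is quoted verbatim from \cite[Theorem~4.6]{bzbj} as a background reconstruction result, so there is nothing in the present paper to compare against. Your Barr--Beck argument is correct and is essentially the proof given in \cite{bzbj}: the progenerator hypotheses yield conservativity and colimit-preservation of $\act_m^R$, and rigidity of $\Aa$ supplies the projection formula identifying the monad $\act_m^R\circ\act_m$ with $\underline{\End}_\Aa(m)\otimes(-)$.
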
\label{thm:reconstruction}

When computing factorisation homology of a surface, we will make extensive use of $\otimes$-excision, as explained in Section \ref{Sec:Excision}. On a categorical level this means that we wish to apply monadic reconstruction to the relative Deligne-Kelly tensor product of two module categories. For this, notice that if $\Ma$ is a left $\Aa$-module category and $a$ an algebra in $\Aa$, one can use the $\Aa$-action on $\mathcal{M}$ to define the category of $a$-modules in $\Ma$, which we denote by $a\text{-mod}_\Ma$. 

\begin{theorem}[{\cite[Theorem 4.12]{bzbj}}]
Let $\Ma_-$ and $\Ma_+$ be right-, respectively left $\Aa$-module categories. Assume that $m \in \Ma_-$ and $n \in \Ma_+$ are both $\Aa$-progenerators. Then there are equivalences
$$
\Ma_- \boxtimes_\mathcal{A} \Ma_+ \cong \underline{\End}_\Aa(m)\operatorname{-mod}_{\Ma_+} \cong (\underline{\End}_\Aa(m),\underline{\End}_\Aa(n))\operatorname{-bimod}_\mathcal{A}
$$
of categories.
\end{theorem}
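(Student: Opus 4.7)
The plan is to derive both equivalences from the preceding reconstruction theorem (\cite[Theorem~4.6]{bzbj}) combined with the bar-complex presentation of the relative Deligne-Kelly tensor product $\boxtimes_\Aa$. I would start by applying that theorem separately to the two progenerators, obtaining canonical equivalences $\Ma_- \cong A\operatorname{-mod}_\Aa$ and $\Ma_+ \cong B\operatorname{-mod}_\Aa$ of right- and left-$\Aa$-module categories respectively, where $A \coloneqq \underline{\End}_\Aa(m)$ and $B \coloneqq \underline{\End}_\Aa(n)$. Substituting the first identification into the left-hand side of the theorem reduces the first equivalence to the base-change statement
$$A\operatorname{-mod}_\Aa \underset{\Aa}{\boxtimes} \Ma_+ \cong A\operatorname{-mod}_{\Ma_+},$$
which I take to be the key technical lemma.

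To prove this lemma I would argue monadically. The $\Aa$-linear forgetful functor $A\operatorname{-mod}_\Aa \to \Aa$ produces, after relative tensor product with $\Ma_+$, a functor $U \colon A\operatorname{-mod}_\Aa \boxtimes_\Aa \Ma_+ \to \Aa \boxtimes_\Aa \Ma_+ \cong \Ma_+$ admitting a left adjoint $F$ that sends $x \in \Ma_+$ to the free $A$-module $A \otimes x$, the tensor being formed via the $\Aa$-action on $\Ma_+$. The induced monad $UF$ on $\Ma_+$ is precisely ``tensoring with $A$'' through the $\Aa$-action, so by definition its algebras are the objects of $A\operatorname{-mod}_{\Ma_+}$. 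It remains to verify that $U$ is monadic, which I would do by exhibiting $A\operatorname{-mod}_\Aa \boxtimes_\Aa \Ma_+$ as the geometric realisation of the two-sided bar construction in $\Prc$ and reading off conservativity and the preservation of $U$-split coequalisers directly from this presentation; here I would invoke \cite[Proposition~3.5]{bzbj} to guarantee that $\boxtimes$ preserves the relevant sifted colimits in each variable.

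For the second equivalence, I would substitute $\Ma_+ \cong B\operatorname{-mod}_\Aa$ into the right-hand side of the first equivalence. Unwinding the definition of the internal module structure, an $A$-module structure on an object of $B\operatorname{-mod}_\Aa$ amounts precisely to a pair of commuting left $A$- and right $B$-actions internal to $\Aa$, yielding
$$A\operatorname{-mod}_{\Ma_+} \cong A\operatorname{-mod}_{B\operatorname{-mod}_\Aa} \cong (A,B)\operatorname{-bimod}_\Aa.$$

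The main obstacle lies in the monadicity step: one needs that the forgetful functor $U$ out of the relative tensor product is conservative and preserves the $U$-split coequalisers appearing in the bar construction, and neither of these is automatic at the $(2,1)$-categorical level of $\Prc$. Both properties ultimately rest on the progenerator hypotheses --- compactness of $m$ and $n$ ensures that the internal endomorphism algebras interact well with the sifted colimits defining $\boxtimes_\Aa$, while the generator property supplies the conservativity needed to upgrade the Barr--Beck comparison functor to an equivalence.
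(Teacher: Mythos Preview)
Your proof sketch is reasonable and follows the natural monadic line of argument; however, the paper does not actually prove this theorem. It is stated as a recollection of \cite[Theorem~4.12]{bzbj} and no proof is given in the paper itself. So there is no ``paper's own proof'' to compare your proposal against --- the statement appears in the background section on reconstruction theorems purely as a quoted result, to be applied later in Theorem~\ref{Thm: value surface}. Your approach (reduce to $A\operatorname{-mod}_\Aa \boxtimes_\Aa \Ma_+ \cong A\operatorname{-mod}_{\Ma_+}$ via Barr--Beck, then iterate) is essentially the standard one and matches the argument in the original reference.
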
\label{thm:reconstructionreltensorprod}

The following special case will be of particular interest for us: We assume that $\mathcal{M}_+$ is itself a tensor category and that the $\mathcal{A}$-module structure on $\mathcal{M}_+$ is induced by a tensor functor $F\colon \Aa \to \Ma_+$, which is such that every object in $\Ma_+$ appears as a subobject, or equivalently a quotient, of an object in the image of $F$. Tensor functors with this property are called \emph{dominant}. When in this setting, we have the following base-change formula:

\begin{corollary}[{\cite[Corollary 4.13]{bzbj}}]\label{crl:basechange}
Let $F\colon \Aa \to \Ma_+$ be a dominant tensor functor and $m \in \Ma_-$ an $\Aa$-progenerator. Then there is an equivalence of $\Ma_+$-module categories 
$$
\Ma_- \boxtimes_\Aa \Ma_+ \cong F(\underline{\End}_\Aa(m))\operatorname{-mod}_{\Ma_+}.
$$
\end{corollary}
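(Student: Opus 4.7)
The plan is to derive the corollary from Theorem~\ref{thm:reconstructionreltensorprod} by choosing the monoidal unit $1_{\mathcal{M}_+}$ as an $\mathcal{A}$-progenerator in $\mathcal{M}_+$, and then translating the resulting description of modules over an algebra internal to $\mathcal{A}$ into one of modules over its $F$-image in the tensor category $\mathcal{M}_+$. Because the $\mathcal{A}$-module structure on $\mathcal{M}_+$ is given by $a \cdot x = F(a) \otimes x$, the action functor $\act_{1_{\mathcal{M}_+}} \colon \mathcal{A} \to \mathcal{M}_+$ coincides with $F$ itself, so its right adjoint is $F^R$ and consequently $\underline{\End}_{\mathcal{A}}(1_{\mathcal{M}_+}) = F^R(1_{\mathcal{M}_+})$.

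The first step I would carry out is the verification that $1_{\mathcal{M}_+}$ is indeed an $\mathcal{A}$-progenerator. Faithfulness of $F^R$ is the classical reformulation of dominance of $F$ (every object of $\mathcal{M}_+$ being a subquotient of some $F(a)$), hence $1_{\mathcal{M}_+}$ is an $\mathcal{A}$-generator. Colimit preservation of $F^R$ yields projectivity; in the $\Prc$-framework this follows from the rigidity of $\mathcal{A}$ together with $F$ being a tensor functor, which ensures via standard adjoint-lifting arguments in the presentable setting of \cite[Section~4]{bzbj} that $F^R$ is itself cocontinuous.

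The second step is the application of Theorem~\ref{thm:reconstructionreltensorprod} with $m \in \mathcal{M}_-$ and $n = 1_{\mathcal{M}_+}$, yielding
\begin{equation}
\mathcal{M}_- \boxtimes_{\mathcal{A}} \mathcal{M}_+ \cong \underline{\End}_{\mathcal{A}}(m)\operatorname{-mod}_{\mathcal{M}_+},
\end{equation}
where the algebra $A \coloneqq \underline{\End}_{\mathcal{A}}(m) \in \mathcal{A}$ acts on objects of $\mathcal{M}_+$ through the inherited $\mathcal{A}$-structure. To finish, I would identify this with the category of $F(A)$-modules internal to the tensor category $\mathcal{M}_+$: since $F$ is monoidal, $F(A)$ is naturally an algebra in $\mathcal{M}_+$, and an $A$-module structure on $x \in \mathcal{M}_+$ with respect to the $\mathcal{A}$-action is precisely a map $F(A) \otimes x \to x$ whose associativity and unit diagrams are, via the coherence isomorphisms of the tensor functor $F$, exactly those of an $F(A)$-module in $\mathcal{M}_+$. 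The $\mathcal{M}_+$-module structure on either side is given by right multiplication in $\mathcal{M}_+$, so the equivalence is one of $\mathcal{M}_+$-module categories.

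I expect the main obstacle to be the projectivity verification for $1_{\mathcal{M}_+}$, since the generator property is a direct restatement of the dominance hypothesis, and the translation between algebra-actions through $F$ is formal once one knows $F$ is monoidal. The critical non-formal input is that the right adjoint $F^R$ of the dominant tensor functor $F \colon \mathcal{A} \to \mathcal{M}_+$ preserves colimits, which relies crucially on the rigidity of $\mathcal{A}$ and on the presentable, compactly generated ambient structure of $\Prc$.
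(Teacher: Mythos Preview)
The paper does not supply a proof of this corollary; it simply records the statement as \cite[Corollary~4.13]{bzbj}. So there is no in-paper argument to compare against, and the relevant question is whether your derivation from Theorem~\ref{thm:reconstructionreltensorprod} is sound.

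Your approach is correct and is precisely the intended one. Identifying $\act_{1_{\mathcal{M}_+}}$ with $F$, verifying that dominance gives faithfulness of $F^R$ and that rigidity of $\mathcal{A}$ forces $F^R$ to be cocontinuous (this is exactly the content of \cite[Section~4]{bzbj}, in particular the proof that compact-preserving tensor functors out of rigid categories in $\Prc$ have cocontinuous right adjoints), and then invoking Theorem~\ref{thm:reconstructionreltensorprod} with $n=1_{\mathcal{M}_+}$ is the standard route. The final identification of $A$-modules in $\mathcal{M}_+$ (via the $\mathcal{A}$-action) with $F(A)$-modules in $\mathcal{M}_+$ (via the internal tensor product) is, as you say, a formal consequence of the monoidal structure on $F$. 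One minor remark: the first equivalence in Theorem~\ref{thm:reconstructionreltensorprod} does not actually require a progenerator on the $\mathcal{M}_+$ side, so strictly speaking your first step is only needed if you want to pass through the bimodule description; but since the theorem is stated with both hypotheses, checking it does no harm.
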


\section{Factorisation homology for surfaces with $\G$-bundles}\label{Sec:FHD-mfds}

In this section we use excision and reconstruction theorems to compute factorisation homology of an abelian rigid balanced braided tensor category $\mathcal{A}$ equipped with $\G$-action, for $\G$ a finite group, over a surface $\Sigma$ with principal $\G$-bundles and at least one boundary component.\par 
Furthermore, we study the algebraic structure corresponding to the evaluation on annuli, boundary conditions 
and point defects.

\subsection{Reconstruction for rigid braided tensor categories with group action}\label{Sec: G-Reconstruction}

For $d \in D$, consider the right $\Aa^{\boxtimes 2}$-module category $\Ma_\g$, whose underlying category is $\Aa$ and the action is
\begin{equation}\label{eq:twsited regular action}
\reg^{\g} \colon \Ma_{\g} \boxtimes \Aa \boxtimes \Aa \xrightarrow{\id \boxtimes \id \boxtimes \vartheta(\g)} \Ma_{\g} \boxtimes \Aa \boxtimes \Aa \xrightarrow{T^3} \Ma_{\g} \ \ ,
\end{equation}
where $T^3$ is the iterated tensor product functor $x \boxtimes y \boxtimes z \mapsto x \otimes y \otimes z$.
\begin{lemma}\label{lma:pg}
$1_\mathcal{A}$ is a progenerator for the twisted regular action $\reg^{\g}$.
\end{lemma}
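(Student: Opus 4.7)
The plan is to reduce the claim to the corresponding untwisted statement (the $\g=e$ case), which is established using rigidity of $\mathcal{A}$ along the lines of \cite[Section 5]{bzbj}, by exploiting that $\vartheta(\g)$ is an autoequivalence of $\mathcal{A}$ in $\Prc$ with quasi-inverse $\vartheta(\g)^{-1} \cong \vartheta(\g^{-1})$ supplied by the coherence data of the $\G$-action.

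Unfolding the definition of $\reg^{\g}$ in \eqref{eq:twsited regular action}, the action functor of $1_\mathcal{A}$ is
\begin{equation}
\act_{1_\mathcal{A}}^{\reg^\g}(y \boxtimes z) \;=\; 1_\mathcal{A} \otimes y \otimes \vartheta(\g)(z) \;\cong\; y \otimes \vartheta(\g)(z),
\end{equation}
so that $\act_{1_\mathcal{A}}^{\reg^\g} \cong \act_{1_\mathcal{A}}^{\reg} \circ (\id_\mathcal{A} \boxtimes \vartheta(\g))$ as functors $\mathcal{A}^{\boxtimes 2} \to \mathcal{A}$. Taking right adjoints reverses the order of composition, yielding
\begin{equation}
\left(\act_{1_\mathcal{A}}^{\reg^\g}\right)^R \;\cong\; (\id_\mathcal{A} \boxtimes \vartheta(\g)^{-1}) \circ \left(\act_{1_\mathcal{A}}^{\reg}\right)^R,
\end{equation}
since the right adjoint of an equivalence in $\Prc$ is its quasi-inverse.

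It then suffices to know that $\left(\act_{1_\mathcal{A}}^{\reg}\right)^R$ is faithful and cocontinuous, which is exactly the untwisted version of the lemma; this is a standard consequence of rigidity of $\mathcal{A}$ and is essentially the content of the corresponding progenerator statements in \cite[Section 5]{bzbj}. Since $\id_\mathcal{A} \boxtimes \vartheta(\g)^{-1}$ is an autoequivalence of $\mathcal{A}^{\boxtimes 2}$, and both faithfulness and cocontinuity are closed under composition with equivalences, the composite $\left(\act_{1_\mathcal{A}}^{\reg^\g}\right)^R$ is again faithful and cocontinuous, so $1_\mathcal{A}$ is a progenerator for $\reg^\g$.

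I expect the only mildly delicate point to be the bookkeeping of the coherence isomorphisms of the $\G$-action when identifying the various natural transformations above; no new analytic input beyond the untwisted case is required, since the hard work of controlling the right adjoint of the tensor product by means of rigidity is entirely confined to the untwisted statement.
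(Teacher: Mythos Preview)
Your proposal is correct and follows essentially the same approach as the paper: reduce to the untwisted case and use that $\vartheta(\g)$ is an autoequivalence. The paper's proof is a two-line version of your argument, citing \cite[Proposition~4.15]{bzbj} (not Section~5) for the untwisted progenerator statement and then simply observing that precomposing with the automorphism $\vartheta(\g)$ preserves the progenerator property.
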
 
\begin{proof}
The unit $1_\mathcal{A}$ is a progenerator for the right regular action (see \cite[Proposition 4.15]{bzbj}). Since $\vartheta(\g)$ is an automorphism of $\mathcal{A}$, it is also a progenerator for $\reg^{\g}$.
\end{proof}

The internal endomorphism algebra $\underline{\End}_{\Aa^{\boxtimes 2}}(1_\mathcal{A})$ can 
be explicitly described by the coend 
\begin{align}
\int^{V \in \text{comp}(\mathcal{A})} V^\vee \boxtimes \vartheta(\g^{-1}).V \ \ ,
\end{align}
where $V^\vee$ is the dual of $V$ and the colimit is taken over compact objects in $\Aa$. To derive the above expression it is enough to note that the action is given by pre-composition of the regular action with the automorphism $\id \boxtimes \vartheta(\g)$ with adjoint $\id \boxtimes \vartheta(\g^{-1})$ and use Remark 4.16 of \cite{bzbj}. Applying the tensor product functor $T\colon \mathcal{A} \boxtimes \mathcal{A}
\longrightarrow \mathcal{A}$ to $\underline{\End}_{\Aa^{\boxtimes 2}}(1_\mathcal{A})$ we get the object 
\begin{equation}\label{eq:twistedcoend}
\mathcal{F}_{\mathcal{A}}^{\g} \coloneqq \int^{V \in \text{comp}(\mathcal{A})} V^\vee \otimes \vartheta(\g^{-1}).V \ \ .
\end{equation}
Notice that for the identity element $e \in \G$, this is Lyubashenko's coend $\int V^\vee \otimes V$ \cite{LyubCoend}, which in particular is a braided Hopf algebra in $\Aa$.

\begin{example}\label{ex:CoendRep(H)}
Let $H$ be a ribbon Hopf algebra with $\G$-action, meaning that an element $\g \in \G$ acts on $H$ by Hopf algebra automorphisms, the universal R-matrix is $\G$-invariant, i.e.~$\mathcal{R} \in (H \otimes H)^{\G}$ and the ribbon element is preserved by the $H$-action. Let $\Rep(H)$ be the braided tensor category of locally finite left modules over $H$ on which the elements $\g \in \G$ act through the pullback of representations along $\g^{-1}$. It is a well-known result that at the identity element $e \in \G$, the algebra $\Fa_{\Rep(H)}^{e}$ is identified with the braided dual of $H$, also known as the reflection equation algebra (REA), equipped with the coadjoint action. Its underlying vector space is given by the matrix coefficients $H^\circ$ of finite dimensional $H$-representations. As an algebra, the REA can be obtained from the so-called Faddeev-Reshetikhin-Takhtajan (FRT) algebra via twisting by a cocycle given in terms of the universal R-matrix \cite{twistREalg}. In more detail, the FRT algebra is identified with the coend 
$$
\mathcal{F}_{\text{FRT}} = \int^{V \in \Rep^{\fd}(H)} V^\vee \boxtimes V \in \Rep(H)^{\rev} \boxtimes \Rep(H) \ \ ,
$$
where $\Rep(H)^{\rev}$ is the category with the opposite monoidal product, with multiplication $m_{\text{FRT}}$ induced by the canonical maps 
$$
(V^\vee \boxtimes V) \otimes (W^\vee \boxtimes W) = (V^\vee \otimes^{\rev} W^\vee) \boxtimes (V \otimes W) \cong (W \otimes V)^\vee \boxtimes (W \otimes V) \xrightarrow{\iota_{V \otimes W}} \mathcal{F}_{\text{FRT}} \ \ .
$$
The REA is then given by the image of the FRT algebra under the composite functor 
\begin{equation}\label{eq:FRTandREA}
\Rep(H)^{\rev} \boxtimes \Rep(H) \xrightarrow{(\id,\sigma) \boxtimes \id} \Rep(H) \boxtimes \Rep(H) \xrightarrow{T} \Rep(H) \ \ ,
\end{equation}
where $(\id, \sigma)$ denotes the identity functor, equipped with a non-trivial tensor structure given by the braiding $\sigma$ in $\Rep(H)$. \par 
In the decorated case, we precompose the functor in \eqref{eq:FRTandREA} with the automorphism $1 \boxtimes \vartheta(d)$. Then, for any $\g \in \G$, the underlying vector space of $\Fa_{\Rep(H)}^{\g}$ is identified again with $H^\circ$ via 
$$
V^\vee \otimes {\g}^*V \xrightarrow{\iota_V} H^\circ, \quad \iota_V(\phi \otimes v) = \phi( - \triangleright ({\g}^{-1})^*v),
$$
for any $V \in \Rep^{\fd}(H)$, but $H^\circ$ is now equipped with the twisted coadjoint action $\ad^*_d(h \otimes \phi)(v)=\phi(S(h_{(1)})(-){\g}.h_{(2)}\triangleright v)$. The multiplication on the coend algebra is defined in terms of its universal property. Concretely, consider the following dinatural map 
$$
f_{V,W} \colon  V^\vee \otimes {\g}^*V  \otimes  W^\vee \otimes {\g}^*W  \xrightarrow{\sigma_{{\g}^*V,W^\vee \otimes {\g}^*W}}  V^\vee \otimes W^\vee \otimes {\g}^*W \otimes {\g}^*V  \xrightarrow{\cong}(W \otimes V)^\vee \otimes {\g}^*(W \otimes V) \xrightarrow{\iota_{W \otimes V}} \Fa_{\Rep(H)}^{\g} 
$$
Then there exists a unique multiplication map $\Fa_{\Rep(H)}^{\g} \otimes \Fa_{\Rep(H)}^{\g} \xrightarrow{m} \Fa_{\Rep(H)}^{\g}$ such that $f_{V,W} = m \circ (\iota_V \otimes \iota_W)$. Explicitly, the product of $\phi,\psi \in \Fa_{\Rep(H)}^{\g}$ is given by
$$
m_{\text{REA}}^{\g}(\phi \otimes \psi) = m_{\text{FRT}}(\phi(\mathcal{R}_1(-){\g}.\mathcal{R}'_1) \otimes \psi(S(\mathcal{R}'_2)\mathcal{R}_2(-)) \ \ .
$$
In the language of \cite{twistREalg}, we thus find that $\Fa^{\g}_{\Rep(H)}$ is obtained by twisting the module algebra $(H^\circ, \ad^*_\g)$ by the cocycle $\mathcal{R}_1 \otimes {\g}.\mathcal{R}'_1 \otimes \mathcal{R}_2 \mathcal{R}'_2 \otimes 1$, where we write $\mathcal{R} = \mathcal{R}_1 \otimes \mathcal{R}_2$ and we use primes to distinguish different copies of the R-matrix.
\end{example}

\begin{example}
The category of finite-dimensional $U_q(\mathfrak{g})$-modules of type 1 is a semisimple braided tensor category via the quasi R-matrix $\Theta$. The quantised coordinate algebra $\mathcal{O}_q(G)$ is then defined as the algebra of matrix coefficients of objects in this category. Given an automorphism $\kappa \in \Out(G)$, the twisted coend algebra \eqref{eq:twistedcoend} takes the form $T(\underline{\End}_{\Rep_q(G)^{\boxtimes 2}}(\mathbb{C})) \cong \bigoplus_{V} V^\vee \otimes \kappa^*V$, where the sum runs over the simple objects. By a quantum version of the Peter-Weyl theorem (see for example \cite[Proposition 4.1]{QPeterWeyl}) we get an identification $\bigoplus_{V} V^\vee \otimes \kappa^*V \cong \mathcal{O}_q(G)$ as vector spaces, and by the previous example, we thus find that the coend algebra is isomorphic to $\mathcal{O}_q(G)$ with $\kappa$-twisted multiplication.
\end{example}

\subsection{Computation on punctured surfaces}\label{sec:compFH}

Throughout this section we consider connected oriented surfaces with at least one boundary component. We can pick a ciliated fat graph model to describe the surface $\Sigma$ we want to work with, which in~\cite{bzbj} is conveniently defined via a \emph{gluing-pattern}, that is a bijection $P \colon \{1,1',\dots,n,n'\} \to \{1,\dots,2n\}$, such that $P(i) < P(i')$. Here, $n$ is the number of edges of the fat-graph model of $\Sigma$. Given a gluing pattern $P$, we can reconstruct $\Sigma$ as depicted in Figure \ref{fig:gluingpattern}, namely by gluing $n$ disks $\mathbb{D}_{\bullet \bullet}$ with two marked intervals each to a disk ${_{\bullet^{2n}}}\mathbb{D}_\bullet$ with $2n+1$ marked intervals, thereby gluing the intervals $i$ and ${i}'$ to $P(i)$ and $P(i')$, respectively. 

\begin{definition}\label{Def: Gluing pattern}
A \emph{$\G$-labeled gluing pattern} is a gluing pattern $P\colon \{ 1,1',\dots ,n ,n' \} \longrightarrow \{ 1, \dots , 2n \}$ together with $n$ elements $\g_1,\dots , \g_n \in \G$.  
\end{definition}

Notice that the fundamental group of a genus $g$ surface with $r+1$ boundary components is free on $n = 2g + r$ generators. This implies that a $\G$-labeled gluing pattern determines a principal $\G$-bundle on the surface constructed from the gluing pattern. Furthermore, up to equivalence all principal $\G$-bundles on surfaces with at least one boundary arise in this way. 

\begin{figure}[h!]
\begin{subfigure}{0.5\textwidth}
\centering
\vspace{0.25cm}
	\begin{overpic}[scale=0.5
		,tics=10]
		{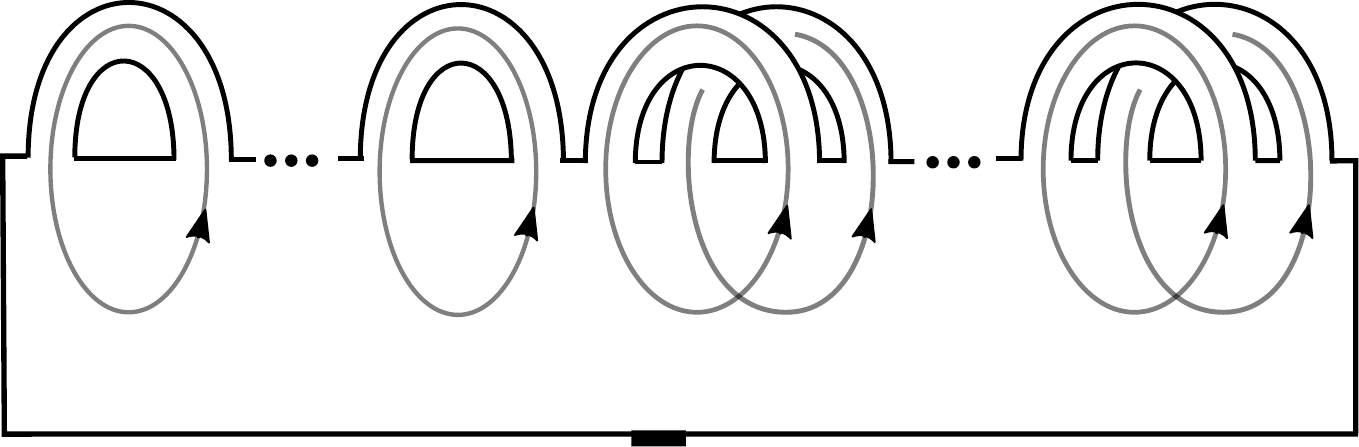}
		\put(5.25,4.25){\footnotesize $[\gamma_{\g_1}]$}
		\put(29.5,4.25){\footnotesize $[\gamma_{\g_r}]$}
		\put(42.5,4.25){\footnotesize $[\gamma_{\g_{r+1}}]$}
		\put(56,4.25){\footnotesize $[\gamma_{\g_{r+2}}]$}
		\put(73.5,4.25){\footnotesize $[\gamma_{\g_{n-1}}]$}
		\put(87.5,4.25){\footnotesize $[\gamma_{\g_{n}}]$}
	\end{overpic}  
	\vspace{0.5cm}
	\caption{Generators of the homotopy group $\pi_1(\Sigma)$.}
\end{subfigure}
\begin{subfigure}{.5\textwidth}
  \centering
  \vspace{0.25cm}
  \begin{overpic}[scale=0.5
		,tics=10]
		{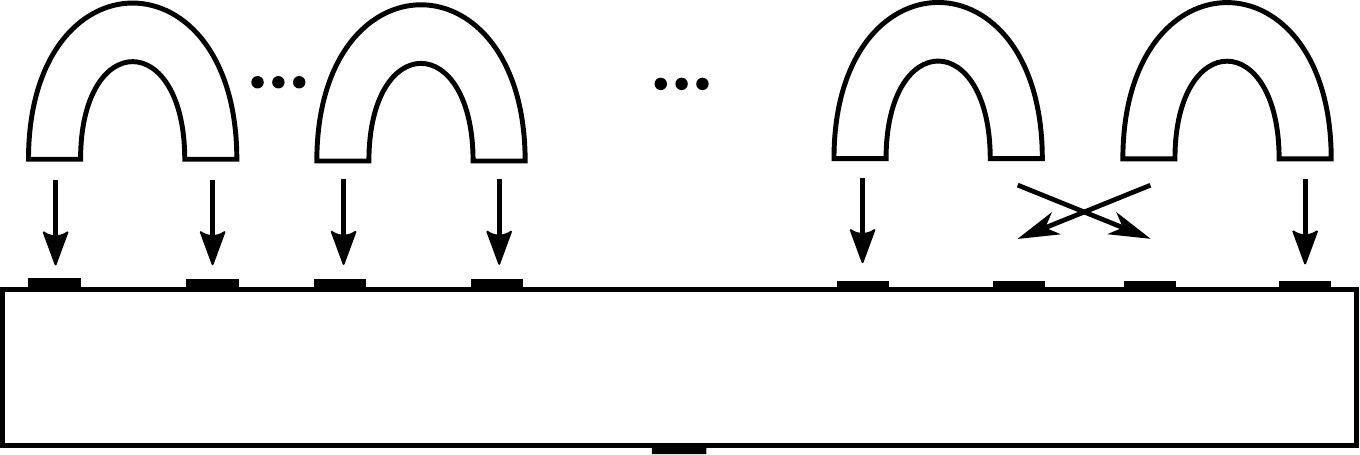}
		\put(2,7.5){\tiny $P(1)$}
		\put(12,7.5){\tiny $P(1')$}
		\put(22.5,7.5){\tiny $P(r)$}
		\put(32.5,7.5){\tiny $P(r')$}
		\put(60,7.5){$\cdots$}
		\put(70,7.5){\tiny $P(n)$}
		\put(84.5,6.75){$|$}
		\put(77.5,2.5){\tiny $P((n-1)')$}
		\put(90,7.5){\tiny $P(n')$}
		\put(-1.5,30){\footnotesize $\g_1$}
		\put(20,30){\footnotesize $\g_r$}
		\put(52.5,30){\footnotesize $\g_{n-1}$}
		\put(78.5,30){\footnotesize $\g_n$}
	\end{overpic}  
	\vspace{0.5cm}
  \caption{Gluing a surface from a decorated gluing pattern.}
  \label{fig:gluingpattern}
\end{subfigure}
\caption{ }
\end{figure}
 
For a $\G$-labeled gluing pattern $(P,\g_1\dots \g_n)$ we are going to define an algebra $a_P^{\g_1,\dots, \g_n} \in \mathcal{A}$. As an object in $\Aa$, it is defined by the tensor product 
\begin{align}
a_P^{\g_1,\dots , \g_n} \coloneqq \bigotimes_{i=1}^n \mathcal{F}_{\mathcal{A}}^{\g_i} \ \ ,
\end{align}  
where the $\mathcal{F}_{\mathcal{A}}^{\g_i}$ are defined by the coend in Equation \eqref{eq:twistedcoend}. The gluing pattern can be used to define an algebra structure on this object in complete analogy with \cite{bzbj}. To that end, we will use the following terminology: Two labeled discs $\mathbb{D}_{\bullet \bullet}^{\g_i}$ and $\mathbb{D}_{\bullet\bullet}^{\g_j}$ with $i < j$ are called 
\begin{itemize}
    \item \emph{positively (negatively) linked} if $P(i) < P(j) < P(i') < P (j')$ ($P(j) < P(i) < P (j') < P (i')$)
    \item \emph{positively (negatively) nested} if $P(i) < P(j) < P(j') < P(i')$ ($P(j) < P(i) < P(i') < P(j')$)
    \item \emph{positively (negatively) unlinked} if $P(i) < P(i') < P(j) < P (j')$ ($P(j) < P(j') < P(i) < P (i')$)
\end{itemize}
To each of the above cases, we assign a crossing-morphism as depicted in Figure \ref{Fig:crossingmorphisms} below. Notice that the crossing-morphism in the nested case differs from the one given in \cite[Definition 5.8]{bzbj}.

\begin{figure}[h!]
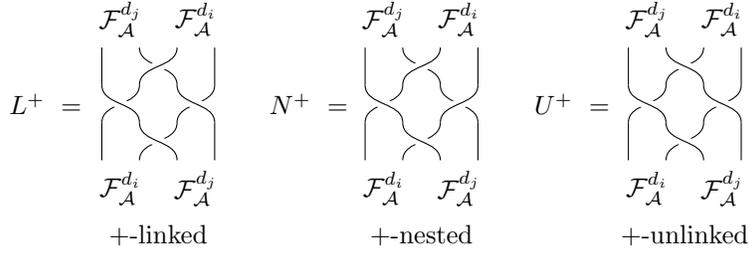

\begin{equation}
	\tikzfig{crossing}
\end{equation}
 \caption{Definition of crossing-morphisms $L^+,N^+,U^+ \colon \Fa_\Aa^{\g_i} \otimes \Fa_\Aa^{\g_j} \to \Fa_\Aa^{\g_j} \otimes \Fa_\Aa^{\g_i}$ for positively linked, nested and unlinked decorated discs. Notice that we read the diagrams from bottom to top.}
 \label{Fig:crossingmorphisms}
 \end{figure}
 
Now, for each pair of indices $1 \leq i < j \leq n$, the restriction of the multiplication to $\Fa^{\g_i}_\Aa \otimes \Fa^{\g_j}_\Aa \subset a_P^{\g_1,\dots,\g_n}$ is defined by
$$
\Fa^{\g_i}_\Aa \otimes \Fa^{\g_j}_\Aa \otimes \Fa^{\g_i}_\Aa \otimes \Fa^{\g_j}_\Aa \xrightarrow{\id \otimes C \otimes \id} \Fa^{\g_i}_\Aa \otimes \Fa^{\g_i}_\Aa \otimes \Fa^{\g_j}_\Aa \otimes \Fa^{\g_j}_\Aa \xrightarrow{m \otimes m} \Fa^{\g_i}_\Aa \otimes \Fa^{\g_j}_\Aa \ \ ,
$$
where $C$ is either $L^\pm$, $N^\pm$ or $U^\pm$, depending on whether the decorated discs $\mathbb{D}^{\g_i}_{\bullet \bullet}$ and $\mathbb{D}^{\g_j}_{\bullet \bullet}$ are $\pm$-linked, $\pm$-nested or $\pm$-unlinked.\par 
Finally, given a $\G$-labeled gluing pattern, we wish to describe the module structure induced by gluing the marked disks $\mathbb{D}_{\bullet \bullet}^{\g_i}$ to the disk $_{\bullet^{2n}}\mathbb{D}_\bullet$ as sketched in Figure \ref{fig:gluingpattern}. To that end, we look at the example of a sphere with three punctures $(\mathbb{S}^2)_3$ and a $\G$-bundle described by the map $\varphi \colon \pi_1((\mathbb{S}^2)_3) \to \G$ sending the two generators of the fundamental group to $\g_1$ and $\g_2$, respectively. The corresponding gluing pattern is $P(1,1',2,2') = (1,2,3,4)$, decorated by the tuple $(\g_1,\g_2) \in \G^2$. We then choose a collar-gluing $(\mathbb{S}^2)_3 \cong \Sigma_- \cup_{\Sigma_0} \Sigma_+$ for the punctured sphere, as sketched on the right hand side of Figure \ref{fig:sphere}, and an equivalence in $\G\text{-}\mathsf{Man}_2$, so that the maps to $B\G$ are constant on $\Sigma_- \setminus \Sigma_0$ and $\Sigma_+ \setminus \Sigma_0$ and are given by the loops $\gamma_{\g_1}$ and $\gamma_{\g_2}$ on fixed open intervals in $\Sigma_0$, which are depicted by the red and blue intervals in Figure \ref{fig:sphere}. We immediately see that we are in a situation similar to Example \ref{Ex:Excision D-mfd}: The right $\Aa \boxtimes \Aa$-module structure on $\int_{\mathbb{D}^{\g_i}_{\bullet \bullet}} \Aa$, for $i=1,2$, is the twisted regular action $\reg^{\g_i}$ from \eqref{eq:twsited regular action}. The module structure for more general decorated gluing patterns can be worked out analogously.
 
\begin{figure}[H]
 \centering
  \vspace{0.25cm}
  \begin{overpic}[scale=0.4
		,tics=10]
		{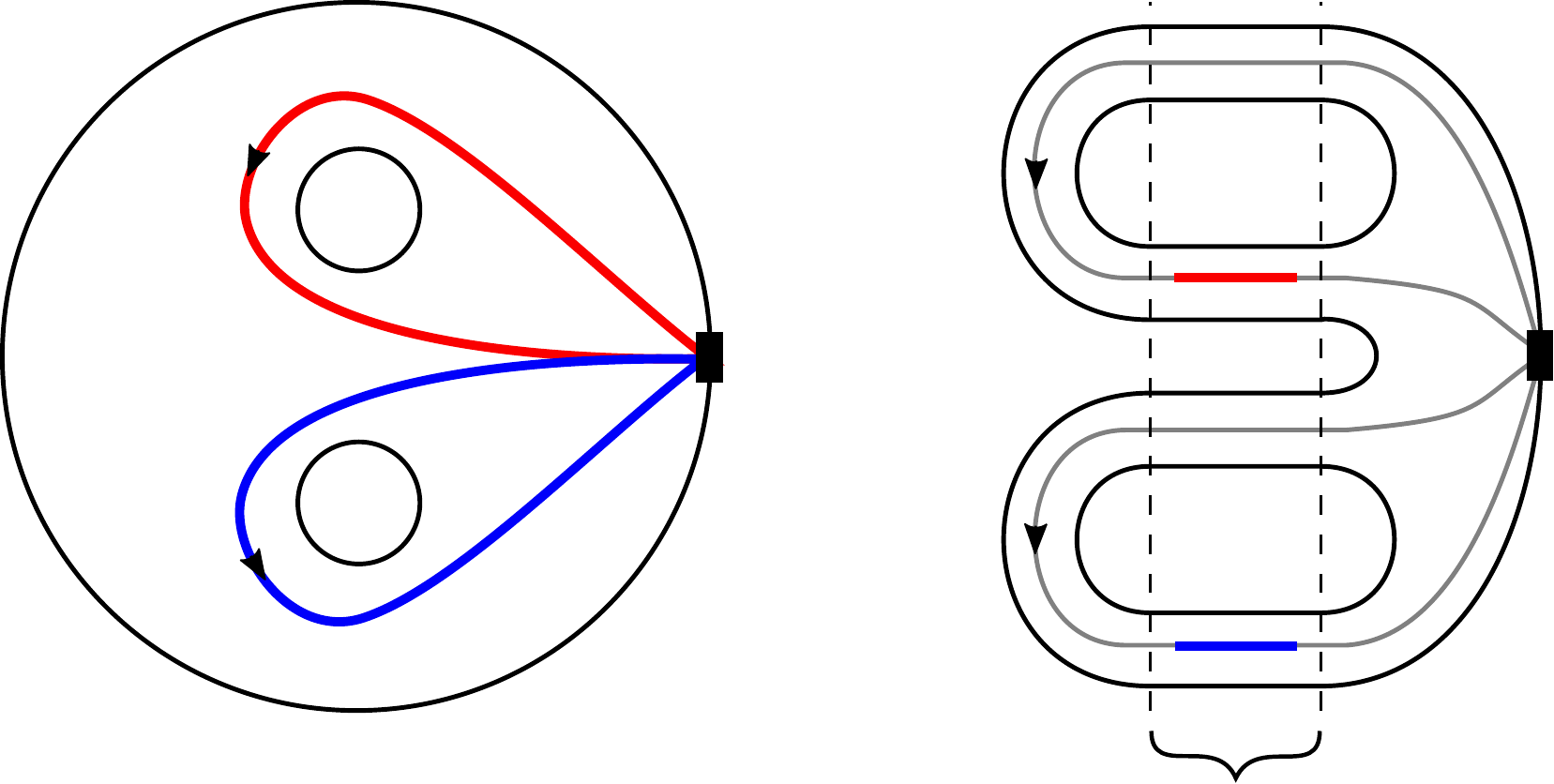}
		\put(52.5,25){$\cong$}
		\put(77.5,-7.5){$\Sigma_0$}
		\put(7.5,40){\textcolor{red}{$\gamma_{\g_2}$}}
		\put(7.5,15){\textcolor{blue}{$\gamma_{\g_1}$}}
	\end{overpic}  
	\vspace{0.5cm}
  \caption{Example: sphere with three punctures.}
  \label{fig:sphere}
\end{figure}

\begin{theorem}\label{Thm: value surface}
Let $\Sigma$ be a surfaces with at least one boundary component. Fix a principal $\G$-bundle $\varphi\colon \Sigma \longrightarrow B\G$ on $\Sigma$ and a corresponding $\G$-labeled gluing pattern $(P,\g_1, \dots , \g_n)$. There is an equivalence of categories
\begin{align}
\int_{(\Sigma,\varphi)} \mathcal{A} \cong a_P^{\g_1,\dots, \g_n} \operatorname{-mod}_{\mathcal{A}}
\end{align} 
\end{theorem}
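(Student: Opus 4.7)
The plan is to follow the strategy of \cite[Theorem 5.14]{bzbj}, adapting it to the $\G$-equivariant setting by using the twisted excision formula of Example~\ref{Ex:Excision D-mfd}.

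First, I would invoke the $\G$-labeled gluing pattern to exhibit $\Sigma$ as built from $n$ decorated bigons $\mathbb{D}_{\bullet\bullet}^{\g_i}$ glued to a central multi-marked disk ${}_{\bullet^{2n}}\mathbb{D}_\bullet$ carrying a trivial $\G$-bundle. Since the diffeomorphism type of the glued surface, together with the loop decorations produced by the $\g_i$, recovers $(\Sigma,\varphi)$, this presentation realises $\int_{(\Sigma,\varphi)}\mathcal{A}$ as an iterated relative Deligne--Kelly tensor product over copies of $\mathcal{A}$.

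Next, I would compute each tensor factor separately. The central disk is contractible and its $\G$-bundle is trivial, so its factorisation homology is $\mathcal{A}$, viewed as an $\mathcal{A}^{\boxtimes 2n}$-module in the canonical way through the tensor product. For each decorated bigon, $\int_{\mathbb{D}_{\bullet\bullet}^{\g_i}}\mathcal{A}\cong \mathcal{A}$ as a category, but by the twisted excision analysis of Example~\ref{Ex:Excision D-mfd} its $\mathcal{A}^{\boxtimes 2}$-module structure is precisely the twisted regular action $\reg^{\g_i}$ from~\eqref{eq:twsited regular action}. Lemma~\ref{lma:pg} then guarantees that $1_\mathcal{A}$ is an $\mathcal{A}^{\boxtimes 2}$-progenerator for this action, and Theorem~\ref{thm:reconstruction} identifies this bigon piece with modules over the internal endomorphism algebra, which is the coend $\int^{V}V^\vee\boxtimes\vartheta(\g_i^{-1}).V$.

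I would then assemble these factors by iterating the base-change Corollary~\ref{crl:basechange}. Each base-change step applies the tensor functor $T\colon \mathcal{A}^{\boxtimes 2}\to \mathcal{A}$ to the coend algebras, producing the factors $\mathcal{F}_\mathcal{A}^{\g_i}$ of~\eqref{eq:twistedcoend}; their ordered tensor product yields $a_P^{\g_1,\dots,\g_n}$ as an object of $\mathcal{A}$. The main obstacle, and the step requiring the most care, is verifying that the algebra structure induced by the iterated reconstruction coincides with the combinatorial multiplication defined via the crossing morphisms $L^\pm$, $N^\pm$, $U^\pm$ of Figure~\ref{Fig:crossingmorphisms}. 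For every ordered pair $(i,j)$, the cross-multiplication on $\mathcal{F}_\mathcal{A}^{\g_i}\otimes\mathcal{F}_\mathcal{A}^{\g_j}$ is assembled from three ingredients: the braiding picked up when the bigon handles cross inside the central disk, the balancing that materialises from the framing on each bigon, and the $\vartheta(\g^{-1})$-twist contributed by the regular action on the bigons. Enumerating the positively/negatively linked, nested and unlinked cases and checking case by case that these three contributions combine into exactly the string diagrams of Figure~\ref{Fig:crossingmorphisms} will complete the identification. The nested case is where I expect the genuinely new $\G$-equivariant feature to appear and to be responsible for the discrepancy with the formula in \cite{bzbj}, since it is precisely here that the twist on one bigon must be transported through the other via the $\G$-action on $\mathcal{A}$.
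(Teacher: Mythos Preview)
Your overall strategy is correct and matches the paper's: decompose $(\Sigma,\varphi)$ via the $\G$-labeled gluing pattern, identify each bigon with $\mathcal{A}$ carrying the twisted regular action, apply monadic reconstruction and base change along the iterated tensor functor, and finally match the induced algebra structure with the combinatorial one. Two points are worth correcting, though.

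First, a structural comment. You propose to \emph{iterate} Corollary~\ref{crl:basechange} one bigon at a time, whereas the paper performs a single excision over $\mathcal{A}^{\boxtimes 2n}$ and a single application of base change along $T^{2n}\colon \mathcal{A}^{\boxtimes 2n}\to\mathcal{A}$. Both are valid, but the one-shot approach makes the verification of the cross-multiplication transparent: the crossing morphism between the $i$-th and $j$-th handles is exactly $J_{i,j}^{-1}\circ J_{j,i}$, where $J$ is the shuffle braiding encoding the monoidal structure of $T^{2n}$. If you iterate, you will have to keep track of how each successive base change interacts with the previously-glued handles, which is messier and obscures this clean description.

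Second, and more substantively, your expectations about what enters the cross-multiplication are off. No balancing appears: the crossing morphisms $L^\pm,N^\pm,U^\pm$ of Figure~\ref{Fig:crossingmorphisms} are built entirely from the braiding of $\mathcal{A}$, and the verification in the paper uses only the shuffle braiding of $T^{2n}$. Likewise, the $\G$-action does \emph{not} show up in the cross-terms between different handles. The equivariant novelty is entirely absorbed into the objects $\mathcal{F}_\mathcal{A}^{\g_i}$ themselves (they are the $\vartheta(\g_i^{-1})$-twisted coends), and the way two such coends commute past each other is governed by the same braiding combinatorics as in \cite{bzbj}. The discrepancy in the nested case noted before Figure~\ref{Fig:crossingmorphisms} is a correction to the formula in \cite[Definition~5.8]{bzbj}, not a new $\G$-equivariant phenomenon. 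So the step you flag as ``requiring the most care'' is in fact identical to the undecorated argument once you have set up the twisted coends correctly; your anticipated transport of the twist through the $\G$-action will not materialise.
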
   

\begin{proof}
The following is an extension of the proof given in \cite[Theorem 5,14]{bzbj} to surfaces with $\G$-bundles. We have seen that for a $\g$-labeled disk $\mathbb{D}^\g_{\bullet\bullet}$ with two marked intervals we have $\int_{\mathbb{D}^\g_{\bullet\bullet}}\Aa \cong \Aa$ as plain categories, with the markings inducing the structure of a right $\Aa^{\boxtimes 2}$-module category with module structure given by the twisted regular action $\reg^{\g}$. Now, $\int_{\sqcup_i \mathbb{D}^{\g_i}_{\bullet\bullet}} \Aa \cong \Aa^{\boxtimes n}$ has the structure of a right $\Aa^{\boxtimes 2n}$-module category. Indeed, using the decorated gluing pattern $(P,\g_1,\dots,\g_n)$ we have an action:
$$
\reg_P^{\g_1,\dots,\g_n} \colon (x_1 \boxtimes \dots \boxtimes x_n) \boxtimes (y_1 \boxtimes \dots \boxtimes y_{2n}) \mapsto (x_1 \otimes y_{P(1)} \otimes \vartheta(\g_1).y_{P(1')}) \boxtimes \dots \boxtimes (x_n \otimes y_{P(n)} \otimes \vartheta(\g_n).y_{P(n')})
$$
We denote the resulting right module category by $\Ma_{P}^{\g_1,\dots,\g_n}$.\par  
On the other hand, we have the disk ${_{\bullet^{2n}}}\mathbb{D}_\bullet$ with $2n$ marked intervals to the left and one marked interval to the right. This turns $\int_{{_{\bullet^{2n}}}\mathbb{D}_\bullet} \mathcal{A} \cong \mathcal{A}$ into a $(\Aa^{\boxtimes 2n},\Aa)$-bimodule via the iterated tensor product 
$$
(x_1 \boxtimes \dots \boxtimes x_{2n}) \boxtimes y \boxtimes z \mapsto x_1 \otimes \dots \otimes x_{2n} \otimes y \otimes z.
$$
We denote the resulting bimodule category by ${_{\Aa^{\boxtimes 2n}}}\Aa_\Aa$. Using excision, we then have 
$$
\int_{(\Sigma, \varphi)}\Aa \cong \Ma_P^{\g_1,\dots,\g_n} \underset{\Aa^{\boxtimes 2n}}{\boxtimes} {_{\Aa^{\boxtimes 2n}}}\Aa_\Aa \ \ .
$$
Let $\tau_P \colon \{1, \dots ,2n\} \to \{1, \dots, 2n\}$ be the bijection given by postcomposing the map defined by $2k-1 \mapsto k$, $2k \mapsto k'$ with $P$. Notice that the inverse of this map is part of the action $\reg^{\g_1,\dots,\g_2}_P$. Applying monadic reconstruction as in Theorem \ref{thm:reconstruction}, together with Lemma \ref{lma:pg}, we can identify $\Ma_P^{\g_1,\dots,\g_n}$ with modules over an algebra $\underline{\End}_{\Aa^{\boxtimes 2n}}(1_\Aa)_P^{\g_1,\dots,\g_n} \in \Aa^{\boxtimes 2n}$, obtained from $\underline{\End}_{\Aa^{\boxtimes 2}}(1_\Aa)^{\g_1} \boxtimes \dots \boxtimes \underline{\End}_{\Aa^{\boxtimes 2}}(1_\Aa)^{\g_n}$ by acting with $\tau_P$. Applying Corollary \ref{crl:basechange} to the dominant tensor functor $T^{2n} \colon \Aa^{2n} \to \Aa$, we thus get 
$$
\int_{\Sigma}\Aa \cong T^{2n}(\underline{\End}_{\Aa^{\boxtimes 2n}}(1_\Aa)_P^{\g_1,\dots,\g_n})\operatorname{-mod}_\Aa \ \,
$$
as right $\Aa$-module categories. \par 
Let us write $T^{2n}(\underline{\End}_{\Aa^{\boxtimes 2n}}(1_\Aa)_P^{\g_1,\dots,\g_n}) = \widetilde{a}_P$ for brevity. To finish the proof, we want to show that there is an isomorphism of algebras $\widetilde{a}_P \cong a_P^{\g_1, \dots, \g_n}$. Consider the subalgebras $$\Fa^{(i,i')}_\Aa \coloneqq \underline{\End}_{\Aa_{P(i)} \boxtimes \Aa_{P(i')}}(1_\Aa)^{\g_i} \in \Aa^{\boxtimes 2n}$$ and their images under the tensor functor $\Fa_\Aa^{(i)} \coloneqq T^{2n}(\Fa_\Aa^{(i,i')}) \in \Aa$. By embedding each $\Fa_\Aa^{(i)}$ into $\widetilde{a}_P$ we get a map 
$$
\widetilde{m}_P \colon \Fa_\Aa^{(1)} \otimes \dots \otimes \Fa_\Aa^{(n)} \hookrightarrow \widetilde{a}_P^{\otimes n} \xrightarrow{\widetilde{m}} \widetilde{a}_P \ \ ,
$$
where $\widetilde{m}$ is the multiplication in $\widetilde{a}_P$. This map establishes the isomorphism on the level of objects in $\mathcal{A}$. The restriction of the multiplication to the image of one of the $\Fa_\Aa^{(i)}$ agrees with the multiplication $m$ in $\Fa_\Aa^{\g_i}$. So it is left to show that for each pair of indices $1 \leq i < j \leq n$ the composition
$$
\Fa_\Aa^{(i)} \otimes \Fa_\Aa^{(j)} \otimes \Fa_\Aa^{(i)} \otimes \Fa_\Aa^{(j)} \xrightarrow{\id \otimes C \otimes \id} \Fa_\Aa^{(i)} \otimes \Fa_\Aa^{(i)} \otimes \Fa_\Aa^{(j)} \otimes \Fa_\Aa^{(j)} \xrightarrow{m \otimes m} \Fa_\Aa^{(i)} \otimes \Fa_\Aa^{(j)} \xrightarrow{\widetilde{m}_P} \widetilde{a}_P,
$$
for $C$ being $L^\pm, N^\pm$ or $U^\pm$, agrees with $\widetilde{m}_P|_{(\mathcal{F}_{\Aa}^{(i)} \otimes \mathcal{F}_{\Aa}^{(j)})^{\otimes 2}}$. To that end, consider the following diagram
\begin{center}
\begin{tikzcd}
& T^4(\Fa_\Aa^{(i,i')} \otimes \Fa_\Aa^{(j,j')}) = T^4(\Fa_\Aa^{(j,j')} \otimes \Fa_\Aa^{(i,i')}) \arrow[dd,"T^4(m)"] \\
\Fa_\Aa^{(i)} \otimes \Fa_\Aa^{(j)} \arrow[ur,dashed,"J_{i,j}"] \arrow[dr,"\widetilde{m}_P",swap] && \Fa_\Aa^{(j)} \otimes \Fa_\Aa^{(i)} \arrow[dl,"\widetilde{m}_P"] \arrow[ul,dashed,"J_{j,i}",swap] \\
& \widetilde{a}_P
\end{tikzcd}
\end{center}
where the label $T^4(m)$ on the vertical arrow means applying the tensor functor to the multiplication in $\underline{\End}_{\Aa^{\boxtimes 2n}}(1_\Aa)_P^{\g_1, \dots, \g_n}$. The dashed arrows, making the above diagram commute, can be described by exhibiting the tensor structure of the iterated tensor product functor
$$
J_{i,j} \colon \Fa_\Aa^{(i)} \otimes \Fa_\Aa^{(j)} = T^4(\Fa_\Aa^{(i,i')}) \otimes T^4(\Fa_\Aa^{(j,j')}) \xrightarrow{\cong} T^4(\Fa_\Aa^{(i,i')} \otimes \Fa_\Aa^{(j,j')})
$$
given by the shuffle braiding\footnote{The shuffle braiding $
J \colon a_1 \otimes \dots \otimes a_n \otimes b_1 \otimes \dots \otimes b_n \xrightarrow{\cong} a_1 \otimes b_1 \otimes \dots \otimes a_n \otimes b_n$
is given by $J = \sigma_{a_n,b_{n-1}} \circ \dots \circ \sigma_{a_3 \otimes \dots \otimes a_n,b_2} \circ \sigma_{a_2 \otimes \dots \otimes a_n,b_1}$, where $\sigma$ is the braiding of $\Aa$.}. As an example, consider the gluing pattern $P(1,1',2,2') = (1,3,4,2)$ describing positively nested handles. The corresponding shuffle braiding is 
$$
J_{1,2} = (1 \otimes 1 \otimes \sigma) \circ (1 \otimes \sigma \otimes 1), \quad J_{2,1} = (\sigma \otimes 1 \otimes 1) \circ (1 \otimes \sigma \otimes 1),
$$
and we observe that the composition $J^{-1}_{1,2} \circ J_{2,1}$ agrees with the nested crossing morphism $N_{1,2}^{+} \colon \Fa_\Aa^{\g_2} \otimes \Fa_\Aa^{\g_1} \to \Fa_\Aa^{\g_1} \otimes \Fa_\Aa^{\g_2}$. From commutativity of the above diagram, we then get that $\widetilde{m}_P|_{\Fa^{\g_2}_\Aa \otimes \Fa_\Aa^{\g_1}} = \widetilde{m}_P|_{\Fa_\Aa^{\g_1} \otimes \Fa_\Aa^{\g_2}} \circ N_{1,2}^+$, which finishes the proof for the positively nested case. The other five cases can be worked out analogously.
\end{proof}

\subsection{Little bundles algebras and braided $\G$-crossed categories}\label{Sec: LB}

The value of oriented factorisation homology of a rigid balanced braided category $\Aa$ on $\mathbb{S}^1 \times \R $ is given by the Drinfeld centre $\mathcal{Z}(\Aa)$ of $\Aa$. In \cite[Remark 3.2]{bzbj2} it is observed that $\int_{\mathbb{S}^1\times \R} \Aa $ carries two natural monoidal structures induced from the
topology of genus zero surfaces; one is induced by stacking annuli in the $\R$-direction, which we will denote $\otimes_{\R}$, and the other one is induced by embedding annuli into the pair of pants and will be denoted $\otimes_{\text{Pants}}$. The monoidal structure coming from the pair of pants requires some explanation: Evaluating factorisation homology on the pair of embeddings sketched in Figure \ref{fig:pairofpants} gives rise to the cospan 
\begin{align}
\int_{\mathbb{S}^1\times \R} \Aa \boxtimes \int_{\mathbb{S}^1\times \R} \Aa \xrightarrow{(\iota_1 \sqcup \iota_2)_*} \int_{\text{Pants}} \Aa \xleftarrow{{\iota_{\text{out}}}_*} \int_{\mathbb{S}^1 \times \R} \Aa
\end{align}  
in $\Prc$. Using the right adjoint\footnote{Note that the right adjoint $\iota_{\text{out}}^*$ is again in $\Prc$ since $\iota_{\text{out}}$ is given by acting on the distinguished object in $\int_{\text{Pants}} \Aa$ which is a progenerator.} $\iota_{\text{out}}^*$ to ${\iota_{\text{out}}}_*$ we get 
an induced tensor product $\otimes_{\text{Pants}}$, which agrees with the usual tensor product on the Drinfeld centre. 
We refer to~\cite{Wasserman} for a detailed 
algebraic discussion of the type of interaction we expect between these two monoidal structures in the case of fusion categories.

\begin{figure}[h!]
\begin{center}
\vspace{0.25cm}
	\begin{overpic}[scale=0.3
		,tics=10]
		{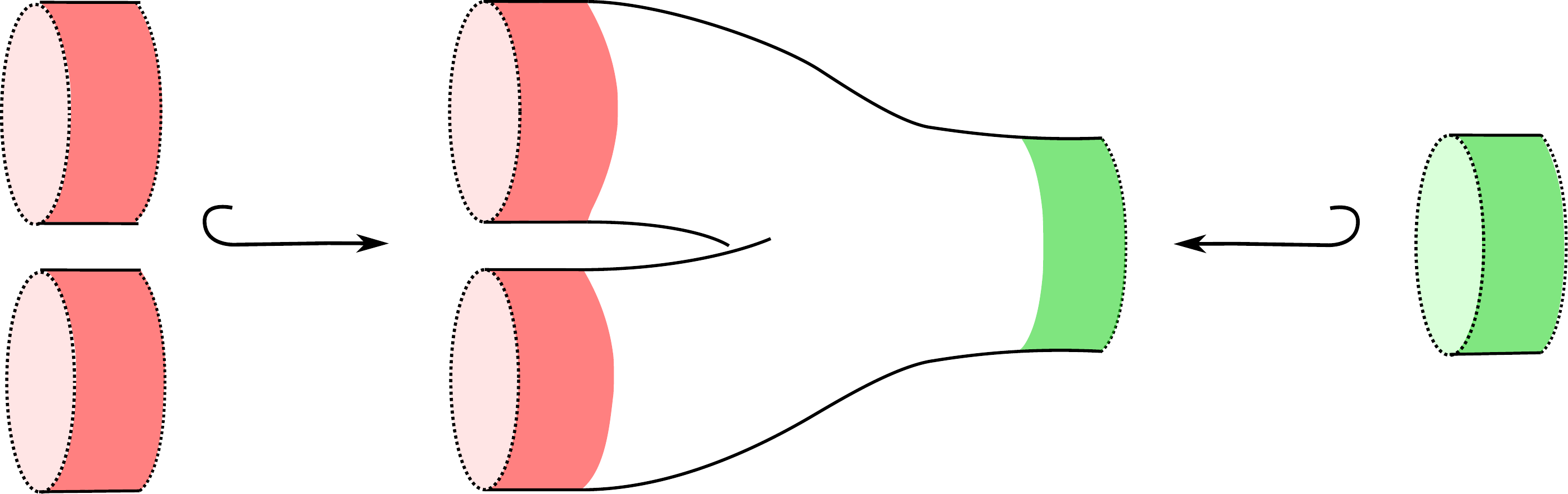}
		\put(13,10){$\iota_1 \sqcup \iota_2$}
		\put(77.5,10){$\iota_{\text{out}}$}
	\end{overpic}  
	\vspace{0.5cm}
	\caption{The maps inducing the monoidal structure $\otimes_{\text{Pants}}$.}
\label{fig:pairofpants}
\end{center}
\end{figure}

For the case of interest in the present work, i.e.~in the case that $\Aa$ is equipped with a $\G$-action, the situation is slightly different since the annulus $\mathbb{S}^1\times \R$ can be endowed with different maps into $B\G$. We can assume that, up to homotopy, every map $\varphi \colon \mathbb{S}^1\times \R \longrightarrow B\G$ is constant in the $\R$-direction and hence we still find an $\mathsf{E}_1$-algebra structure $\otimes_\R$ on $\int_{(\mathbb{S}^1\times \R, \varphi)} \Aa $. On the other hand, the pair of pants only induces an $\mathsf{E}_2$-algebra structure in the case that all maps into $B\G$ are chosen to be constant. The non-constant maps into $B\G$ induce instead another interesting algebraic structure on the collection of values taken by factorisation homology on all possible maps $\varphi \colon \mathbb{S}^1\times \R \longrightarrow B\G$, a little $\G$-bundles algebra~\cite{littlebundles}. \par
The operad $\mathsf{E}_2^{\G}$ of little $\G$-bundles is coloured over the space of maps from $\mathbb{S}^1$ to $B\G$. To describe the space of operations we need to introduce some notation: For a disk embedding $f\in \mathsf{E}_2(r)$ we denote by $\comp(f)$ the complement of the interior of all embedded disks.
Let $\underline{\varphi}=(\varphi_1,\dots,\varphi_r)$ be an $r$-tuple of maps $\varphi_i \colon \mathbb{S}^{1} \to B\G$ and $\psi \colon \mathbb{S}^{1} \to B\G$ another map. The space of operations $\mathsf{E}_2^{\G} \binom{\psi}{ \underline{\varphi}}$ consists of pairs of an element $f \in \mathsf{E}_2(r)$ together with a map $\xi \colon \comp(f) \to B\G$ whose restriction to $\partial \comp(f)$ is given by $(\underline{\varphi},\psi)$. By construction we have the following:

\begin{proposition}
The value of factorisation homology on $\mathbb{S}^1$ equipped with varying $\G$-bundle decorations has the structure of a little $\G$-bundles algebra. 
\end{proposition}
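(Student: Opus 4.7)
The plan is to construct the little $\G$-bundles algebra structure by directly mimicking the pair-of-pants construction of $\otimes_{\mathrm{Pants}}$ recalled just above the proposition, but tracking the bundle decorations. For each operation $\mu = (f,\xi)$ in $\mathsf{E}_2^\G\binom{\psi}{\underline{\varphi}}$, the disk embedding $f \in \mathsf{E}_2(r)$ together with the map $\xi\colon \comp(f)\to B\G$ produces a cospan
$$\bigsqcup_{i=1}^r (\mathbb{S}^1\times\R,\varphi_i) \xrightarrow{\iota_{\mathrm{in}}} (\comp(f)^\circ, \xi) \xleftarrow{\iota_{\mathrm{out}}} (\mathbb{S}^1\times\R, \psi)$$
in $\G\text{-}\Man_2$, obtained by thickening the inner and outer boundary circles to annular collars. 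Compatibility of the $\G$-bundles on the three pieces is automatic because $\xi|_{\partial \comp(f)} = (\underline{\varphi},\psi)$ by definition of an operation of $\mathsf{E}_2^\G$.

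The next step is to apply factorisation homology and extract an honest morphism out of the resulting cospan in $\Prc$. As in the undecorated pair-of-pants discussion, the embedding $\iota_{\mathrm{out}}$ is a collar neighbourhood of the distinguished outgoing boundary, so $(\iota_{\mathrm{out}})_*$ is the action functor on the distinguished object of $\int_{(\comp(f)^\circ,\xi)}\mathcal{A}$; this object is a progenerator for the natural module structure, and therefore $(\iota_{\mathrm{out}})_*$ admits a right adjoint $(\iota_{\mathrm{out}})^*$ in $\Prc$. I then define the operation associated to $\mu$ by
$$\mu_\ast \coloneqq (\iota_{\mathrm{out}})^* \circ (\iota_{\mathrm{in}})_* \colon \boxtimes_{i=1}^r \int_{(\mathbb{S}^1\times\R,\varphi_i)}\mathcal{A} \longrightarrow \int_{(\mathbb{S}^1\times\R,\psi)}\mathcal{A}.$$
It remains to verify that the assignment $\mu \mapsto \mu_\ast$ respects the symmetric group actions, units, and operadic composition of $\mathsf{E}_2^\G$. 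Equivariance and unitality are immediate from functoriality of $\int_\bullet \mathcal{A}$; operadic composition is obtained by stacking the cospans in $\G\text{-}\Man_2$ along the common middle annulus and then applying functoriality.

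The main obstacle I expect is the Beck--Chevalley compatibility governing operadic composition: when forming $\mu' \circ_i \mu$, the two instances of right adjoints $(\iota_{\mathrm{out}})^*$ appearing in $\mu_\ast$ and $\mu'_\ast$ must interact coherently with the pushforward along the intermediate gluing embedding. This reduces to a projection formula in $\Prc$ for collar embeddings of $\G$-decorated annuli, which should follow from the progenerator property of the distinguished object together with the $(2,1)$-categorical functoriality of $\int_\bullet \mathcal{A}$ on $\G\text{-}\Man_2$ established in Section~\ref{Sec:ReviewFH}. The genuine book-keeping effort lies in tracking the homotopies of maps to $B\G$ through nested operadic compositions and checking that these assemble into the coherence data required by an $\mathsf{E}_2^\G$-algebra.
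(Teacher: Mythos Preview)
Your approach is correct and is precisely what the paper has in mind. The paper's entire treatment of this proposition is the phrase ``By construction we have the following'', which points back to the cospan/right-adjoint construction of $\otimes_{\text{Pants}}$ spelled out in the paragraph immediately preceding the definition of $\mathsf{E}_2^{\G}$; you have simply written out that construction in the decorated setting. Your identification of the Beck--Chevalley/projection-formula coherence as the genuine technical content is accurate, and the paper does not verify it either --- it treats the statement as essentially tautological given the undecorated pair-of-pants discussion.
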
 

The main result of~\cite[Theorem 4.13]{littlebundles} identifies algebras over $\mathsf{E}^{\G}_2$ inside the 2-category $\Cat$ of categories with braided $\G$-crossed categories as defined by Turaev~\cite{turaevgcrossed,turaevhqft} and recalled below. The proof directly carries over to $\Prc$.

\begin{definition}
A \emph{braided $\G$-crossed category} is a $\G$-graded monoidal category 
$$ 
\Aa^{\G} = \bigoplus_{\g \in \G} \Aa_{\g}, \quad \text{such that }  \otimes \colon \mathcal{A}_{\g} \boxtimes \mathcal{A}_{\g'} \to \mathcal{A}_{\g\g'}
$$
together with a $\G$-action on $\Aa^{\G}$, which is such that the image of the action by an element $h \in \G$ on $\Aa_{\g}$ is contained in $\Aa_{h \g h^{-1}}$, and natural isomorphisms $c_{X,Y}\colon X\otimes Y \longrightarrow \g.Y\otimes X$ for $X \in \Aa_{\g}$, satisfying natural coherence conditions~\cite{Coherence}.
\end{definition} 

We call the braided $\G$-crossed category assigned to $\Aa$ by factorisation homology the \emph{$\G$-centre $\mathcal{Z}^{\G}(\Aa)$ of $\Aa$}. The $\g$-components $\mathcal{Z}^{\G}_{\g}(\Aa)$ are given by factorisation homology on $\varphi_{\g} \colon \mathbb{S}^1\times \R \longrightarrow B\G$, where $\varphi_{\g}$ corresponds to the loop $\g \in \pi_{1}(B\G)=\G$ and is constant in the $\R$-direction. To compute the $\G$-centre explicitly, we recall the concept of bimodule traces and twisted centres from~\cite{dsps,fss}. Let $\mathcal{A}\in \Prc$ be a monoidal category and $\mathcal{M}$ be an $\mathcal{A}$-bimodule category. The bimodule trace of $\mathcal{M}$ is 
\begin{align}
\Tr_{\Aa}(\Ma) \coloneqq \mathcal{M} \boxtimes_{\Aa \boxtimes \Aa^{\operatorname{rev}}} \Aa \ \ ,
\end{align}
where $\Aa^{\operatorname{rev}}$ denotes the category $\Aa$ with the reverse multiplication. Assume now that $\mathcal{F} \colon \Aa \to \Aa$ is a monoidal functor and denote by ${_{\langle \mathcal{F} \rangle}}\mathcal{M}$ the $(\Aa, \Aa)$-bimodule whose left action is pulled back along $\mathcal{F}$. Similarly, we will denote $\mathcal{M}_{\langle \mathcal{F} \rangle}$ the bimodule whose right action is pulled back along $\mathcal{F}$. The $\mathcal{F}$-twisted centre $\mathcal{Z}^\mathcal{F}(\Ma)$ is then the Drinfeld centre of the bimodule category $\Ma_{\langle \mathcal{F} \rangle}$.

\begin{proposition}
Let $\g$ be an element of $\G$. There is a natural isomorphism 
\begin{align}
\mathcal{Z}_{\g}^{\G}(\Aa) \cong  \Tr_{\Aa}(\Ma_{\g}) \ \ ,
\end{align}
where $\Ma_{\g}$ is the bimodule constructed in Section~\ref{Sec: G-Reconstruction} via the twisted regular action. Moreover, one can identify the bimodule trace $\Tr_{\Aa}(\Ma_d)$ with the twisted Drinfeld centre $\mathcal{Z}^{\vartheta(\g^{-1})}(\Aa)$.
\end{proposition}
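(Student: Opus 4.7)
The plan is to compute $\int_{(\mathbb{S}^1 \times \R,\, \varphi_\g)} \Aa$ via excision on a suitable collar gluing, and then to identify the resulting relative tensor product with the twisted Drinfeld centre using self-duality of the regular bimodule.

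First, I would decompose $\mathbb{S}^1 \times \R$ as a collar gluing of two open pieces each diffeomorphic to $\R^2$, whose overlap is the disjoint union of two copies of $\R \times (-1,1)$. One arranges the bundle decoration $\varphi_\g$ to be constant on each piece, with the holonomy $\g$ entirely concentrated on one of the two components of the overlap. This places us in the setting of Example~\ref{Ex:Excision D-mfd}, specialised to the annulus. Factorisation homology of each half then evaluates to $\Aa$, and the two marked overlap strips endow each copy with the structure of a right $\Aa \boxtimes \Aa^{\operatorname{rev}}$-module. By the twist analysis of Example~\ref{Ex:Excision D-mfd}, the half carrying the $\g$-holonomy produces precisely the twisted regular action $\reg^\g$ defining $\Ma_\g$ in Section~\ref{Sec: G-Reconstruction}, while the other half yields the untwisted regular bimodule. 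Applying the twisted excision formula~\eqref{Eq: twisted excision} gives
\[
\int_{(\mathbb{S}^1 \times \R,\, \varphi_\g)} \Aa \;\cong\; \Ma_\g \boxtimes_{\Aa \boxtimes \Aa^{\operatorname{rev}}} \Aa \;=\; \Tr_\Aa(\Ma_\g),
\]
which is the first claimed equivalence.

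For the identification $\Tr_\Aa(\Ma_\g) \cong \mathcal{Z}^{\vartheta(\g^{-1})}(\Aa)$, I would use that rigidity of $\Aa$ makes the regular bimodule $\Aa$ self-dual as an $\Aa^e := \Aa \boxtimes \Aa^{\operatorname{rev}}$-module in $\Prc$, giving a natural equivalence $\Ma \boxtimes_{\Aa^e} \Aa \cong \Fun_{\Aa^e}(\Aa, \Ma)$ for every bimodule $\Ma$. Applied to $\Ma_\g$, the pre-composition of the right action by $\vartheta(\g)$ transfers, through the self-duality, to a pre-composition by $\vartheta(\g^{-1})$ on the target bimodule, yielding
\[
\Tr_\Aa(\Ma_\g) \;\cong\; \Fun_{\Aa^e}\bigl(\Aa,\, \Aa_{\langle \vartheta(\g^{-1}) \rangle}\bigr) \;\cong\; \mathcal{Z}^{\vartheta(\g^{-1})}(\Aa),
\]
the last step being the definition of the twisted centre recalled immediately before the proposition.

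The main obstacle is the careful bookkeeping of the twist direction: one must verify that the $\g$-twist acquired on the action from Example~\ref{Ex:Excision D-mfd} corresponds, under bimodule self-duality, to a $\g^{-1}$-twist on the target bimodule and not a $\g$-twist on the same side. This ultimately reflects that passing to internal right duals inverts monoidal automorphisms of $\Aa$. A secondary delicate point is verifying the self-duality of $\Aa$ as an $\Aa^e$-module in $\Prc$ rather than in the finite abelian setting; this relies on the rigidity of $\Aa$ together with the compact generation assumption built into $\Prc$, in parallel with the reconstruction machinery already invoked in Section~2.4.
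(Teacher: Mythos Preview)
Your proposal is correct and follows essentially the same route as the paper. For the first equivalence both you and the paper cut the decorated annulus into two disks overlapping along two strips, push the holonomy onto one strip, and invoke the twisted excision of Example~\ref{Ex:Excision D-mfd} to obtain $\Ma_\g \boxtimes_{\Aa\boxtimes\Aa^{\rev}} \Aa = \Tr_\Aa(\Ma_\g)$. For the second equivalence the paper makes the self-duality step you describe more explicit: it first reconstructs $\Ma_\g$ as $\underline{\End}(1_\Aa)^{\vartheta(\g)}\text{-mod}_{\Aa\boxtimes\Aa^{\rev}}$ via Theorem~\ref{thm:reconstruction}, then applies the relative-tensor-product reconstruction and a categorical Eilenberg--Watts result from \cite{dualizabilityBrTens} to land in $\Hom_{\Aa\boxtimes\Aa^{\rev}}({_{\langle\vartheta(\g^{-1})\rangle}}\Aa,\Aa)$, and finally cites \cite{fss} to identify this with the twisted centre. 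Your direct appeal to self-duality of the regular bimodule is the same argument compressed into one step; the paper's version has the advantage of pinning down the twist bookkeeping by citing concrete results rather than leaving it as the ``main obstacle'' you flag.
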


\begin{proof}
The first statement follows directly from applying excision to the cover sketched in Figure~\ref{fig:annulus} combined with the results of Section~\ref{sec: excision D-mfd}. Note that 
here excision is not used as in the proof of Theorem~\ref{Thm: value surface}.
\begin{figure}[H]
 \centering
  \vspace{0.25cm}
  \begin{overpic}[scale=0.4
		,tics=10]
		{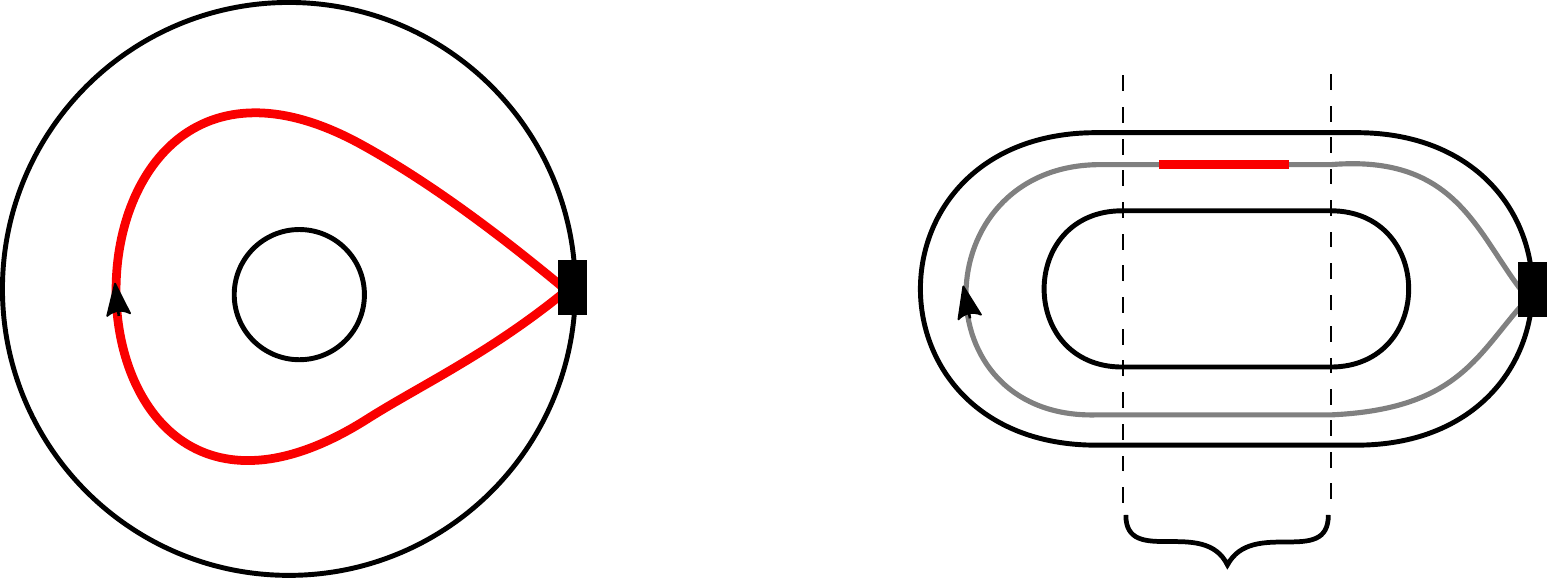}
		\put(47.5,17.5){$\cong$}
		\put(77.5,-7.5){$\Sigma_0$}
	\end{overpic}  
	\vspace{0.5cm}
  \caption{Collar-gluing for the annulus with a map to $B\G$.}
  \label{fig:annulus}
\end{figure}
For the second statement, recall that since $\Aa$ is rigid we can apply Theorem \ref{thm:reconstruction} to identify $\Ma_{\g} \cong \underline{\End}(1_\Aa)^{\vartheta(\g)}\operatorname{-mod}_{\Aa \boxtimes \Aa^{\rev}}$, where $\underline{\End}(1_\Aa)^{\vartheta(\g)}$ is the endomorphism algebra of $1_\Aa$ in $\Aa \boxtimes \Aa^{\rev}$ with respect to the $\vartheta(\g)$-twisted regular action. A categorical version of the Eilenberg-Watts theorem \cite[Lemma 5.7]{dualizabilityBrTens} then gives an equivalence 
$$
\Tr_\Aa(\Ma_{\g}) \underset{\text{Thm. }\eqref{thm:reconstructionreltensorprod}}{\cong} \underline{\End}(1_\Aa)^{\vartheta(\g)}\text{-mod}_{\Aa} \cong \Hom_{\Aa \boxtimes \Aa^{\rev}}( {_{\langle \vartheta(\g^{-1})\rangle}}\Aa, \Aa) \ \ .
$$
But, by \cite[Lemma 2.13]{fss} this is precisely the $\vartheta(\g)$-twisted Drinfeld centre of $\Aa$ as claimed.
\end{proof}

Let us introduce the following bimodule category $\Aa \rtimes \G \coloneqq \bigoplus_{\g \in \G} \mathcal{M}_{\g}$. This category has the structure of a $\G$-graded monoidal category via 
\begin{align}
\otimes_{\Aa \rtimes \G}\colon \mathcal{M}_{\g}\boxtimes \mathcal{M}_{\g'} & \longrightarrow \mathcal{M}_{\g\g'} \\ 
x \boxtimes x' & \longmapsto x \otimes \vartheta(\g).x'
\end{align}
as indicated by the notation. 

\begin{corollary}
The trace $\Tr_{\Aa}(\Aa \rtimes \G)$ of the bimodule $\Aa \rtimes \G$ agrees with the $\G$-centre $\mathcal{Z}^{\G}(\Aa)$ and is a braided $\G$-crossed category. 
\end{corollary}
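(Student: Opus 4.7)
The plan is to prove the corollary in three concrete steps: first identify the underlying category, then match the $\G$-graded monoidal structure, and finally appeal to the little bundles framework to get the braided $\G$-crossed structure essentially for free.

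First I would observe that the trace $\Tr_{\Aa}(-) = (-) \boxtimes_{\Aa \boxtimes \Aa^{\rev}} \Aa$ is a colimit and in particular commutes with direct sums in its argument. Since $\Aa \rtimes \G = \bigoplus_{\g \in \G} \Ma_\g$ as an $\Aa$-bimodule, this yields
\begin{align}
\Tr_{\Aa}(\Aa \rtimes \G) \cong \bigoplus_{\g \in \G} \Tr_{\Aa}(\Ma_\g) \cong \bigoplus_{\g \in \G} \mathcal{Z}_\g^{\G}(\Aa) = \mathcal{Z}^{\G}(\Aa),
\end{align}
where the middle equivalence is precisely the preceding proposition. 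This already gives the $\G$-graded equivalence of underlying categories.

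Next I would upgrade this to an equivalence of $\G$-graded monoidal categories. On the left, the $\G$-graded monoidal structure on $\Aa \rtimes \G$ descends to a monoidal structure on $\Tr_{\Aa}(\Aa \rtimes \G)$ via the universal property of the trace (the relevant $\Aa$-bilinear pairing exists because $\otimes_{\Aa \rtimes \G}$ is $\Aa$-balanced up to the twist built into the action on $\Ma_{\g'}$). On the right, the monoidal structure on $\mathcal{Z}^{\G}(\Aa)_{\g_1} \boxtimes \mathcal{Z}^{\G}(\Aa)_{\g_2} \to \mathcal{Z}^{\G}(\Aa)_{\g_1 \g_2}$ is produced from the pair-of-pants operation in $\mathsf{E}_2^{\G}$ whose two incoming loops carry decorations $\g_1, \g_2$ and whose outgoing loop carries $\g_1 \g_2$. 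Both structures are obtained from the same collar-gluing of the decorated pair of pants from Figure~\ref{fig:annulus} (extended to the pair of pants), so the excision argument of the previous proposition, applied componentwise to pairs $(\g_1, \g_2)$, identifies them. The twist $\vartheta(\g_1)$ appearing in $x \otimes_{\Aa \rtimes \G} x' = x \otimes \vartheta(\g_1).x'$ is precisely the $\G$-twist produced by the excision identification in Example~\ref{Ex:Excision D-mfd} when gluing the middle decorated loop.

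Finally, the braided $\G$-crossed structure follows from general principles: by construction $\mathcal{Z}^{\G}(\Aa)$ is the little $\G$-bundles algebra in $\Prc$ obtained by evaluating factorisation homology on decorated annuli, and the main result of \cite{littlebundles} identifies such algebras with braided $\G$-crossed categories. Transporting this structure along the equivalence above endows $\Tr_{\Aa}(\Aa \rtimes \G)$ with the claimed braided $\G$-crossed structure. The main obstacle I anticipate is the second step: carefully verifying that the monoidal structure induced on the trace by $\otimes_{\Aa \rtimes \G}$ matches, coherently in $\g_1, \g_2$, the one coming from the pair-of-pants embedding — that is, tracking how the action twists on the $\Ma_\g$ interact with the collar-gluing data under excision, and checking this against the explicit formula for $\otimes_{\Aa \rtimes \G}$.
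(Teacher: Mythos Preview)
Your proposal is correct and matches the paper's (implicit) argument: the corollary is stated without proof in the paper, as it follows immediately from the preceding proposition identifying $\mathcal{Z}_\g^{\G}(\Aa) \cong \Tr_{\Aa}(\Ma_\g)$ componentwise, the decomposition of the trace over the direct sum $\Aa \rtimes \G = \bigoplus_{\g} \Ma_\g$, and the already-established little bundles algebra structure on $\mathcal{Z}^{\G}(\Aa)$. Your step two, tracking the monoidal structure through excision on the decorated pair of pants, is more careful than anything the paper spells out, and your caveat about coherence there is well-placed; the paper simply absorbs this into the corollary without comment.
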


\begin{remark}
In~\cite{centerofgradedfusioncategories}, the \emph{graded centre} of a $\G$-graded fusion category $\mathcal{C} = \bigoplus_{\g \in \G} \mathcal{C}_{\g}$ is defined to be $\mathcal{Z}_{\mathcal{C}_e} (\mathcal{C}) \cong \Tr_{\mathcal{C}_e}(\mathcal{C})$ and equipped with the structure of a braided $\G$-crossed category. In the case that $\Aa$ is a braided fusion category with $\G$-action, the $\G$-centre $\mathcal{Z}^{\G}(\Aa)$ agrees with the graded centre of $\Aa \rtimes \G$. A careful comparison of the two little bundles algebra structures would take us too far from the content of the paper.
\end{remark}

\begin{remark}
We also leave a detailed study of the interaction of the monoidal structure $\otimes_{\R}$ induced by stacking annuli in the $\R$-direction with the $\G$-crossed braided structure as an interesting open question for further research.  
\end{remark}     

\subsection{Algebraic description of boundary conditions and point defects}\label{Sec: Defects}

In Section~\ref{Sec: PD and BC} we explained that boundary conditions and point defects for $\G \times SO(2)$-structured 
factorisation homology with values in $\Prc$ are classified by symmetric monoidal functors from the categories of stratified 
disks $D\text{-}\Disk_{2,\partial}$ and $D\text{-}\Disk_{2, *}$ to $\Prc$, respectively. In this section we will describe the algebraic structure classifying these functors. Our strategy will be the following: The 
source categories can naturally be identified with the envelope of the coloured operads $\G\text{-}\mathsf{fSC}$, a framed and $\G$-equivariant version of the Swiss-cheese operad~\cite{Swiss},
and $\G\text{-}\mathsf{fE_2^1}$, a framed and $\G$-equivariant $\mathsf{E}_2$-operad with a frozen strand~\cite{MopCA}, respectively. Hence, defect data corresponds to algebras over them. Both operads are aspherical, meaning that all the homotopy groups of the 
operation spaces vanish in degree higher than 1. For this reason we can work equivalently with the groupoid 
valued operads $\Pi_1(\G\text{-}\mathsf{fSC})$ and $\Pi_1(\G\text{-}\mathsf{fE_2^1})$, instead of
topological operads. We extend existing 
combinatorial models~\cite{Swiss-CheeseNI,MopCA} in terms of generators and relations to the situation at 
hand. The results will be 
combinatorially described groupoid valued coloured operads $\G\text{-}\mathsf{fPeBr}$ and $\G\text{-}\mathsf{fBr^1}$ equivalent to $\Pi_1(\G\text{-}\mathsf{fSC})$ and $\Pi_1(\G\text{-}\mathsf{fE_2^1})$. \par  
We will work within the 2-categorical framework for operads, see 
for example Section 2 of~\cite{cfE2}. The advantage of this is that all structures will automatically be coherent in the appropriate sense. 
Alternatively, one could work with $\Sigma$-cofibrant models, similar to the parenthesised braid model for the $\mathsf{E}_2$-operad~\cite[Chapter 6]{FresseI}, at the categorical level~\cite{MopCA,Swiss-CheeseNI}.  

\subsubsection{Boundary conditions}

We briefly recall the situation without principal bundles~\cite{bzbj2}. The category $\Disk^{\operatorname{fr}}_{2,\partial}$ is 
equivalent to the envelope of the topological Swiss-cheese operad $\SC$ with its two colours $\mathbb{D}$ and $\mathbb{D}_\partial$, corresponding to the standard disk and the half-disk. The spaces of operations are given by rectilinear embeddings. In particular, one has that  
$$
\SC(\underbrace{\mathbb{D},\dots, \mathbb{D}}_{n}; \mathbb{D}) = \mathsf{E}_2(n), \quad \SC(\underbrace{\mathbb{D}_\partial,\dots,\mathbb{D}_\partial}_{n}; \mathbb{D}_\partial) = \mathsf{E}_1(n) \ \ .
$$ 
In Figure~\ref{fig: EG SC} we sketch an operation with different 
colours and in Figure~\ref{fig: Generators SC} we list the generators\footnote{We refer \cite[Section 4.1]{littlebundles} for more details on generators and relations for groupoid valued operads.} 
for the corresponding combinatorial model $\mathsf{PeBr}$ of permutations and braids, constructed in $\cite{Swiss-CheeseNI}$, together with the respective topological operations. 

\begin{figure}[h]
	\begin{center}
		\vspace{0.25cm}
		\begin{overpic}[scale=0.5
			,tics=10]
			{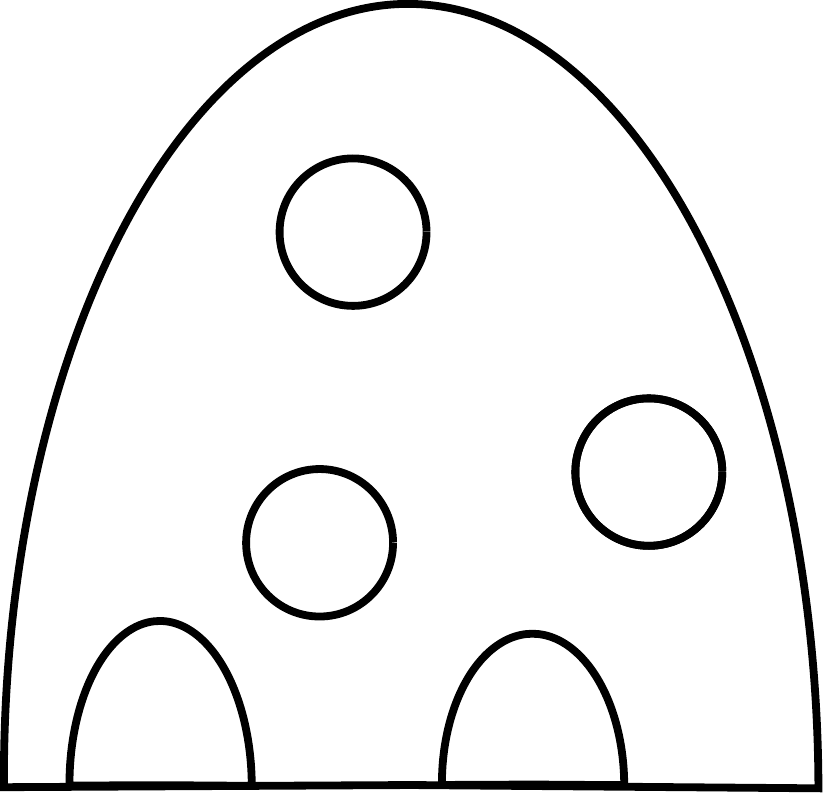}
			\put(18,5){$1$}
			\put(63,5){$2$}
			\put(36,27){$3$}
			\put(77,36){$4$}
			\put(40,65){$5$}
		\end{overpic}  
		\vspace{0.5cm}
		\caption{An example of an operation in $\SC(\mathbb{D}_\partial,\mathbb{D}_\partial,\mathbb{D}, \mathbb{D},\mathbb{D}; \mathbb{D}_\partial)$.}
		\label{fig: EG SC}
	\end{center}
\end{figure}

The relations for $\mathsf{PeBr}$ are such that an algebra over $\SC$ corresponds to a braided monoidal category $\mathcal{A}$, a monoidal category $\mathcal{C}$ and a braided functor $\mathcal{A}\longrightarrow \mathcal{Z}(\mathcal{C})$ into the Drinfeld centre of $\mathcal{C}$.
For a complementary physical perspective on the correspondence between boundary conditions and 
maps into $\mathcal{Z}(\mathcal{C})$ we refer the reader to~\cite{FSV}.   
\begin{figure}[t]
	\begin{center}
		\vspace{0.25cm}
		\begin{overpic}[scale=0.5
			,tics=10]
			{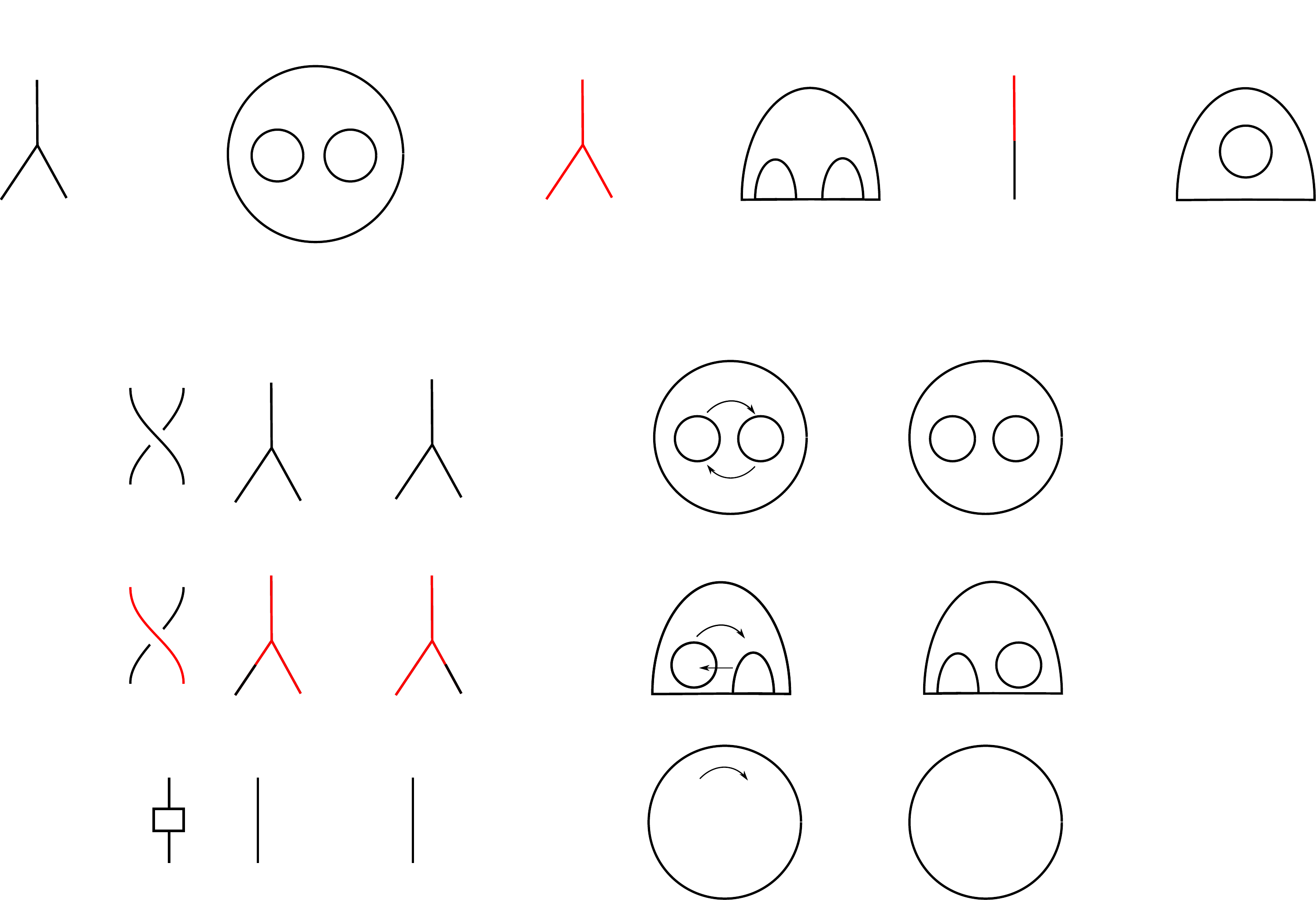}
			\put(2,67.5){$\text{Generating objects:}$}
			\put(2,43){$\text{Generating morphisms:}$}
			\put(9,56){\large$\longmapsto$}
			\put(33,52){\large$,$}
			\put(49,56){\large$\longmapsto$}
			\put(68,52){\large$,$}
			\put(17,28){$1$}
			\put(22.5,28){$2$}
			\put(29.5,28){$2$}
			\put(35,28){$1$}
			\put(17,13){$1$}
			\put(22.5,13){$2$}
			\put(29.5,13){$2$}
			\put(35,13){$1$}
			\put(81,56){\large$\longmapsto$}
			\put(15,34){\large$\colon$}
			\put(24,34){\large$\longrightarrow$}
			\put(40,34){\large$\longmapsto$}
			\put(63,34){\large$\longrightarrow$}
			\put(15,20){\large$\colon$}
			\put(24,20){\large$\longrightarrow$}
			\put(40,20){\large$\longmapsto$}
			\put(63,20){\large$\longrightarrow$}
			\put(15,5){\large$\colon$}
			\put(24,5){\large$\longrightarrow$}			
			\put(40,5){\large$\longmapsto$}
			\put(63,5){\large$\longrightarrow$}
		\end{overpic}  
		\vspace{0.5cm}
		\caption{Generating operations for $\mathsf{fPeBr}$ and their image under the equivalence $\mathsf{fPeBr}\xrightarrow{\cong} \Pi_1(\mathsf{fSC})$. The arrows indicate the paths in the space of embeddings. If we ignore the last generating morphism, we recover the generators of $\mathsf{PeBr}$.}
		\label{fig: Generators SC}
	\end{center}
\end{figure}

To study boundary conditions for oriented manifolds, one works with the framed Swiss-cheese operad $\fSC$ where embeddings are allowed to rotate the disks $\mathbb{D}$. In the respective combinatorial model $\mathsf{fPeBr}$ for the framed Swiss-cheese operad this is incorporated by introducing one additional generator in $\mathsf{fPeBr}(\mathbb{D};\mathbb{D})$, the balancing, and imposing the relation corresponding to Equation~\eqref{Eq: balancing} inside $\mathsf{fPeBr}(\mathbb{D},\mathbb{D};\mathbb{D})$, see also Figure~\ref{fig: Generators SC}. Hence, we see that in order to extend an algebra $(\mathcal{A},\mathcal{C})$ over $\SC$ to an algebra over $\fSC$, we need to equip $\mathcal{A}$ with a balancing. \par 
Finally, we turn our attention to the $\G$-equivariant version $\G\text{-}\fSC$ of the framed Swiss-Cheese operad, together with its combinatorial model $\G\text{-}\mathsf{fPeBr}$, whose envelope is equivalent to $\G\text{-}\Disk_{2,\partial}$. We can assume without loss of generality that all bundles are trivial and hence the colours of the operads do not change. However, for every group element $\g \in \G$, we get an additional arity one operation in both $\G\text{-}\mathsf{fPeBr}(\mathbb{D};\mathbb{D})$ and $\G\text{-}\mathsf{PeBr}(\mathbb{D}_\partial;\mathbb{D}_\partial)$ corresponding to gauge transformations of the trivial bundle, which `commute' with all the other generators. Hence, we can identify $\G\text{-}\mathsf{fPeBr}$ with the Boardman-Vogt tensor product $\mathsf{fPeBr}\otimes_{\text{BV}} \G $, where we consider the group $\G$ as an operad concentrated in arity
one. On the level of algebras this implies $\Alg(\G\text{-}\fSC; \Prc) \cong \Alg(\mathsf{fPeBr}; \Alg(\G;\Prc)) $. But a $\G$-algebra is just an object of $\Prc$ equipped with a $D$-action, and so we can summarise our discussion in the following proposition. 

\begin{proposition}\label{prp:D-bdrycondition}
Let $\mathcal{A}$ be a balanced braided category with $\G$-action. Boundary conditions for $\mathcal{A}$ in $\G \times SO(2)$-structured factorisation homology are given by a monoidal category $\mathcal{C} \in \Prc$ with $\G$-action and a $\G$-equivariant braided functor $\mathcal{A} \longrightarrow \mathcal{Z}(\mathcal{C})$ into the Drinfeld centre of $\mathcal{C}$ with its induced $\G$-action.   
\end{proposition}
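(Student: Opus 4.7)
The plan is to reduce the $\G$-equivariant classification to the non-equivariant one by recognising the relevant operad as a Boardman--Vogt tensor product and then applying adjunction. First I would verify carefully that $\G\text{-}\Disk_{2,\partial}$ is equivalent, as a symmetric monoidal $(2,1)$-category, to the envelope of (a combinatorial model for) the groupoid-valued coloured operad $\G\text{-}\mathsf{fSC}$. The main input is that $\G\text{-}\mathsf{fSC}$ is aspherical (the bundle-decorated embedding spaces deformation retract onto the configuration space decorated by holonomy data), so one can pass to its fundamental groupoid without loss of information. This lets one trade symmetric monoidal functors out of $\G\text{-}\Disk_{2,\partial}$ for $\Prc$-valued algebras over $\G\text{-}\mathsf{fPeBr}$.

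Next I would analyse the generators and relations of $\G\text{-}\mathsf{fPeBr}$. Since all principal $\G$-bundles on a (half-)disk are trivial, the colours are still $\mathbb{D}$ and $\mathbb{D}_{\partial}$, and the only new generating operations compared to $\mathsf{fPeBr}$ are the unary automorphisms of $\mathbb{D}$ and $\mathbb{D}_\partial$ indexed by elements $\g\in\G$, implementing a gauge transformation of the trivial bundle. Checking that these unary generators commute with all other generators of $\mathsf{fPeBr}$ (the multiplication, braiding, balancing, and the half-disk--disk pairing) amounts to exhibiting, in each case, a map $\Delta^2 \to B\G$ filling the corresponding 2-simplex; here one uses triviality of the $\G$-bundle on the complement and that the holonomy loops $\gamma_{\g}$ act on the base point. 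This yields the identification $\G\text{-}\mathsf{fPeBr}\cong \mathsf{fPeBr}\otimes_{\mathrm{BV}} \G$ where $\G$ is regarded as a one-coloured operad concentrated in arity one.

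Having this decomposition, the standard adjunction for the Boardman--Vogt tensor product gives
\begin{equation}
  \Alg(\G\text{-}\mathsf{fSC};\Prc)\;\cong\;\Alg\bigl(\mathsf{fPeBr};\Alg(\G;\Prc)\bigr).
\end{equation}
Since $\G$ sits in arity one, a $\G$-algebra in $\Prc$ is exactly an object of $\Prc$ equipped with a $\G$-action, and morphisms of $\G$-algebras are the $\G$-equivariant ones. Applying now the (already established, non-equivariant) classification of $\mathsf{fPeBr}$-algebras as a triple $(\mathcal{A},\mathcal{C},F\colon \mathcal{A}\to \mathcal{Z}(\mathcal{C}))$ with $\mathcal{A}$ balanced braided, $\mathcal{C}$ monoidal, and $F$ a braided tensor functor, but now internal to $\Alg(\G;\Prc)$, yields precisely the data listed in the proposition: a $\G$-action on $\mathcal{A}$, a $\G$-action on $\mathcal{C}$, and a $\G$-equivariant braided functor $\mathcal{A}\to \mathcal{Z}(\mathcal{C})$, where the $\G$-action on $\mathcal{Z}(\mathcal{C})$ is the canonical one induced from that on $\mathcal{C}$.

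The main obstacle is the bookkeeping in step two, i.e.\ pinning down the presentation $\G\text{-}\mathsf{fPeBr}\cong\mathsf{fPeBr}\otimes_{\mathrm{BV}}\G$. One has to show not only that the gauge generators commute with the existing ones on the nose in the groupoid-valued operad, but also that no new relations among them are imposed by the topology of decorated embedding spaces; equivalently, that the classifying map of the fibre sequence relating undecorated and decorated configuration spaces splits consistently on $\pi_0$ and $\pi_1$. Once this is verified, the remainder of the argument is formal, and the induced $\G$-action on $\mathcal{Z}(\mathcal{C})$ being the canonical one follows by naturality of the Drinfeld centre construction under monoidal autoequivalences.
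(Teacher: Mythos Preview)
Your proposal is correct and follows essentially the same argument as the paper: identify $\G\text{-}\mathsf{fPeBr}$ with the Boardman--Vogt tensor product $\mathsf{fPeBr}\otimes_{\mathrm{BV}}\G$ by noting that the only new generators are arity-one gauge transformations commuting with everything, then use the adjunction $\Alg(\G\text{-}\fSC;\Prc)\cong\Alg(\mathsf{fPeBr};\Alg(\G;\Prc))$ to reduce to the known non-equivariant classification applied in the category of objects with $\G$-action. Your account is in fact somewhat more explicit than the paper's about where the potential difficulties lie (the splitting on $\pi_0$ and $\pi_1$ of the decorated configuration spaces), which the paper handles by the same asphericity-plus-triviality-of-bundles-on-disks argument you sketch.
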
 

\begin{example}
The trivial boundary condition, corresponding to simply removing the boundary and computing factorisation homology on the resulting
manifold without boundary, is given by taking $\mathcal{C} = \mathcal{A}$ together with the canonical embedding $\mathcal{A} \longrightarrow \mathcal{Z}(\mathcal{A})$ induced by the braiding on $\mathcal{A}$. 
\end{example}

\begin{example}
The sources for boundary conditions from~\cite[Section 2.3]{bzbj2} have natural generalisations to the equivariant setting:

\begin{enumerate}
\item Let $\mathcal{A}$ be a balanced braided category with $\G$-action and denote by $E_2(\mathcal{A})$ the 
category of commutative algebras in $\mathcal{A}$, which comes with an induced $\G$-action. For every homotopy
fixed point\footnote{Here a \emph{homotopy fixed point} is a commutative algebra $a$ together with algebra isomorphisms $\tau_d \colon d.a \xrightarrow{\cong} a$ for all $d\in \G$ such that $\tau_{d'} \circ d'.\tau_d= \tau_{d'd}$.} $a \in E_2(\mathcal{A})^\G$, the category $a\operatorname{-mod}$ inherits a natural $\G$-action and
provides an example for boundary conditions of the bulk theory described by $\mathcal{A}$.
\item Consider the quantum Borel algebra $U_q(\mathfrak{b}) \hookrightarrow U_q(\mathfrak{g})$, which is the subalgebra generated by the elements $\{ K^{\pm}_{\alpha_i}, X^+_{\alpha_i} \}_{\alpha_i \in \Pi}$, following conventions from \cite[Section 9.1.B]{quantumgroups}. We get a forgetful tensor functor $\Rep_q(G) \to \Rep_q(B)$. Moreover, as noted in \cite[Section 2.3]{bzbj2}, the R-matrix provides a central structure on this forgetful functor. We observe that we have an $\Out(G)$-action on $U_q(\mathfrak{b})$, given on generators by $K^{\pm}_{\alpha_i} \mapsto K^{\pm}_{\kappa(\alpha_i)}$ and $H^+_{\alpha_i} \mapsto H^+_{\kappa(\alpha_i)}$ for any $\kappa \in \Out(G)$. We conclude that we get an $\Out(G)$-equivariant functor $\Rep_q(G) \to \mathcal{Z}(\Rep_q(B))$.
\end{enumerate}
\end{example}

\begin{remark}
There is another generalisation of the Swiss-cheese operad to the equivariant setting with operations consisting of an element in $\SC$ equipped with a map to $B\G$ on the complement
of the embedding. This is similar to the generalisation of the little disks operad given by the 
little bundles operad. We also expect this operad to play an important role in the description of boundary 
conditions for equivariant field theories. 
\end{remark}

\subsubsection{Point defects}

We again start by recalling the framed result from~\cite{bzbj2} in the language of coloured operads and then gradually build up to the oriented and $\G$-equivariant setting. The disk category $\Disk_{2, *}^{\operatorname{fr}}$ can be described
as the envelope of a topological operad with two colours, $\mathbb{D}$ and $\mathbb{D}_*$, corresponding to a disk and a marked disk, respectively. The spaces of operations are given by rectilinear embeddings which map marked points bijectively to marked points. The concrete structure of this coloured operad makes it into a \emph{moperad} as defined in~\cite[Definition 9]{Willwacher}. A combinatorial model for this topological operad is given  in~\cite{MopCA} in terms of parenthesised braids with a frozen strand. In Figure~\ref{fig: Generators fBr1}, we give a strict version of this combinatorial model, which will be denoted $\mathsf{Br}^1$. The description in terms of generators and relations allows us to read off the corresponding algebraic structure which was introduced in~\cite{BMod3, BMod1, BMod2}.

\begin{definition}\label{Def: Braided module}
Let $\mathcal{A}$ be a braided category. A \emph{braided module over $\mathcal{A}$} is a right module category 
$\triangleleft \colon \mathcal{M} \boxtimes \mathcal{A} \longrightarrow \mathcal{M}$
equipped with a natural isomorphism $E\colon \triangleleft \Longrightarrow \triangleleft$ satisfying (suppressing coherence isomorphisms)
\begin{align}\label{Eq: braided module}
E_{m\triangleleft x, y} &= (\id_{m} \triangleleft \sigma_{y,x}^{-1}) \circ (E_{m,y}\triangleleft \id_x ) \circ (\id_{m} \triangleleft \sigma_{x,y}^{-1}) \\ 
E_{m,x\otimes y}  &=  (E_{m,x}\triangleleft \id_y ) \circ E_{m\triangleleft x,y} \circ ( \id_m \triangleleft (\sigma_{y,x}\circ \sigma_{x,y}))
\end{align}
for all $m\in \mathcal{M}$ and $x,y\in \mathcal{A}$. 
\end{definition}

The framed version $\mathsf{fBr}^1$, giving a combinatorial model for the envelope of $\Disk_{2, *}$, can be described by an extension of $\mathsf{Br}^1$ obtained by adding two additional generating morphisms $\theta \in \mathsf{fBr}^1(\mathbb{D};\mathbb{D})$ and $\theta_* \in \mathsf{fBr}^1(\mathbb{D}_*,\mathbb{D}_*)$, corresponding to rotating the disks by $2\pi$. Furthermore, we need to include Relation~\eqref{Eq: balancing} for $\theta$ and Relation $(R4)$ from Figure~\ref{fig: Generators fBr1} for $\theta_*$. 
\begin{figure}[!t]
	\begin{center}
		\vspace{0.25cm}
		\begin{overpic}[scale=0.5, 
			,tics=10]
			{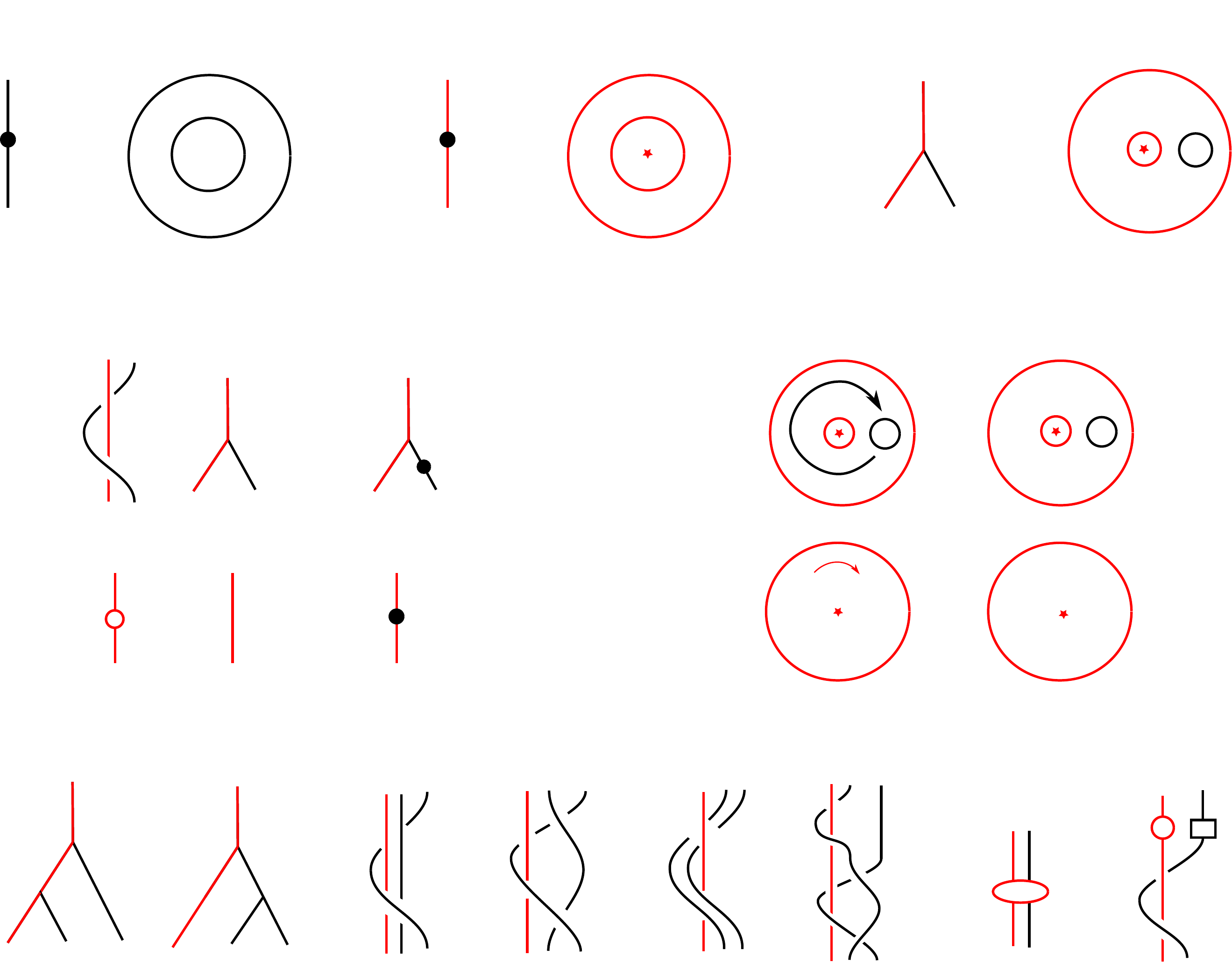}
			\put(0,76){$\text{Generating objects:}$}
			\put(0,53){$\text{Generating morphisms:}$}
			\put(0,18){$\text{Relations:}$}
			\put(-2,66){$d$}
			\put(2,66){ \large $\longmapsto$}
			\put(20,65){ $d$}
			\put(24,60){ \large $,$}
			\put(34,66){$d$}
			\put(34,72){$dhd^{-1}$}
			\put(35.5,59){$h$}
			\put(39,66){ \large $\longmapsto$}
			\put(55,65){ $d$}
			\put(60,60){ \large $,$}
			\put(71,59){$d$}
			\put(74,72){$d$}
			\put(78,66){ \large $\longmapsto$}
			\put(13,42){\large$\colon$}
			\put(13,27){\large$\colon$}
			\put(23,27){\large$\longrightarrow$}
			\put(23,42){\large$\longrightarrow$}
			\put(46,27){\large$\longmapsto$}
			\put(46,42){\large$\longmapsto$}
			\put(74.5,27){\large$\longrightarrow$}
			\put(74.5,42){\large$\longrightarrow$}
			\put(15,36){$d$}
			\put(29.4,36){$d$}
			\put(18,48){$d$}
			\put(33,48){$d$}
			\put(35,41){$d$}
			\put(18,22){$d$}
			\put(32,22){$d$}
			\put(33.5,27){$d$}
			\put(11.5,6){\large$=$}
			\put(37,6){\large$=$}
			\put(62,6){\large$=$}
			\put(87,6){\large$=$}
			\put(26.5,1){\large$,$}
			\put(52,1){\large$,$}
			\put(76,1){\large$,$}
			\put(10,9){$(R1)$}
			\put(35,9){$(R2)$}
			\put(60,9){$(R3)$}
			\put(85.5,9){$(R4)$}
		\end{overpic}  
		\vspace{0.5cm}
		\caption{Generating operations and relations for $\G\text{-}\mathsf{fBr^1}$ and their
image under the equivalence $\G\text{-}\mathsf{fBr^1} \xrightarrow{\cong} \Pi_1(\G\text{-}\mathsf{fE_2^1})$. Notice that we did not depict the relations related to the $\G$-action. The $\g$-labels on the disk for the first two generating objects mean that the map to $B\G$ is the loop $\g$ in radial direction. In $\DMan$ this embedding is isomorphic to the identity embedding equipped with the homotopy corresponding to $\g$. If we ignore the $\G$-labels, we get generators and relations of $\mathsf{fBr}^1$. If we furthermore drop the second generating morphism as well as relation $(R4)$, we get a combinatorial model for $\mathsf{E}^1_2$.}
		\label{fig: Generators fBr1}
	\end{center}
\end{figure}

We note that the system of relations is over-determined: To see this, note that Relation $(R4)$ allows one to rewrite $E_{\mathbb{D}_*,\mathbb{D}}$ in terms of the balancing $\theta$ and $\theta_*$. Inserting this into Relation~$(R3)$ in Figure~\ref{fig: Generators fBr1}, we find that this relation is automatically satisfied and hence obsolete. To show that the combinatorial description is correct, it is enough to note that the operation spaces in $\mathsf{fE}_2^1$ can be identified with the ones of $\mathsf{fE}_2$. 
Reading off the corresponding algebraic structure from the combinatorial model, one finds an equivalent reformulation of the braided balanced modules introduced in~\cite[Theorem 3.12]{bzbj2}. The only 
additional structure to the one described in Definition~\ref{Def: Braided module} is that of a balancing $\theta_{\mathcal{M}}
\colon \id_{\mathcal{M}} \Longrightarrow \id_{\mathcal{M}}$ on $\mathcal{M}$ compatible with $E$. \par 
Finally, we move on to describe point defects in the $D$-equivariant setting, which is slightly more subtle than the boundary conditions described in the previous section. The reason for this is that the disk with one marked point $\mathbb{D}_*$ is replaced by a collection of marked disks $\mathbb{D}_*^\g$ equipped with a map to $B\G$ with 
holonomy $d$. The combinatorial model for $\G\text{-}\mathsf{fE}_2^1$ can be derived from the model for the framed version of the 
little bundles operad given in~\cite[Section 5.4.2]{ThesisLW} similar to the derivation of the model for $\mathsf{fE}_2^1$
from the one for $\mathsf{fE}_2$. It is important to note here that we only consider configurations where the map to $B\G$ has 
non-trivial holonomy around the frozen strand. We list the generators and relations for the combinatorial model $\G\text{-}\mathsf{fBr}^1$ in Figure~\ref{fig: Generators fBr1}.
The corresponding algebraic notion is:
\begin{definition}\label{df:eqbalancedmodule}
Let $\mathcal{A}$ be a balanced braided category with $D$-action. An \emph{equivariant balanced right module over $\mathcal{A}$} is a $D$-graded
category $\mathcal{M}= \bigoplus_{\g \in \G} \mathcal{M}_{\g}$ equipped with 
\begin{itemize}
\item
a $D$-action $\act^\mathcal{M} \colon *\DS D \longrightarrow *\DS \Aut(\mathcal{M}) $ such that the image of $\mathcal{M}_{\g}$ under the action of $d'\in D$ is contained in $\mathcal{M}_{d' \g {d'}^{-1}}$,

\item an equivariant right $\mathcal{A}$-action $ \triangleleft \colon \mathcal{M} \boxtimes \mathcal{A} \longrightarrow \mathcal{M} \ \ ,$ 

\item natural isomorphisms $\theta_{\mathcal{M}}^d \colon \id_{\mathcal{M}_d} \longrightarrow \act^\mathcal{M}_d $ and 
$E^d \colon \triangleleft \longrightarrow \triangleleft \circ \left( \id_{\mathcal{M}_d}\boxtimes \act^\mathcal{A}_d \right) $ for all $d\in D \ \ .$

\end{itemize} 
such that (suppressing coherence isomorphisms)
\begin{itemize}
\item for all $m \in \mathcal{M}_d $ and $x,y \in \mathcal{A}$
\begin{align}
E^d_{m \triangleleft x, y} = \left( \id_{m} \triangleleft \sigma_{\act^\mathcal{A}_d(y),x}^{-1} \right) \circ \left( E^d_{m,y}\triangleleft \id_x \right) \circ \left(\id_{m} \triangleleft \sigma_{x,y}^{-1} \right) \ \ ,
\end{align} 
\item and for all $m \in \mathcal{M}_d $ and $x \in \mathcal{A}$
\begin{align}
\left(\theta_{\mathcal{M}}^d\right)_{m \triangleleft x} =  E^d_{\act^\mathcal{M}_d(m),x} \circ  \left( \left(\theta_{\mathcal{M}}^d\right)_{m}  \triangleleft (\theta_\mathcal{A})_x \right) \ \ .
\end{align}
\end{itemize}
\end{definition}  
 
We can summarise our discussion in the following proposition. 
\begin{proposition}\label{prp:D-mkdpoints}
Point defects in $D\times SO(2)$-structured factorisation homology are equivalent to equivariant balanced modules.
\end{proposition}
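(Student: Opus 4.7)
The strategy follows the same general pattern as for boundary conditions (Proposition~\ref{prp:D-bdrycondition}). The first step is to identify the category $\Disk_{2,*}^{\G \times SO(2)}$ with the envelope of the coloured topological operad $\G\text{-}\mathsf{fE}_2^1$, whose colours are the plain disk $\mathbb{D}$ and the family of marked disks $\mathbb{D}^{\g}_*$ for $\g \in \G$, indexing the holonomy around the (frozen) marked point. Under this identification, point defects correspond to algebras over $\G\text{-}\mathsf{fE}_2^1$ in $\Prc$.

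Next, since the spaces of operations in $\G\text{-}\mathsf{fE}_2^1$ fibre over the discrete set of compatible bundle decorations with fibres weakly equivalent to the corresponding spaces of $\mathsf{fE}_2^1$, asphericity of $\G\text{-}\mathsf{fE}_2^1$ follows from the asphericity of $\mathsf{fE}_2^1$ established in~\cite{MopCA}, and we may replace $\G\text{-}\mathsf{fE}_2^1$ by its fundamental groupoid $\Pi_1(\G\text{-}\mathsf{fE}_2^1)$. The core technical step is to prove that the combinatorially defined groupoid-valued operad $\G\text{-}\mathsf{fBr}^1$ displayed in Figure~\ref{fig: Generators fBr1} is equivalent to $\Pi_1(\G\text{-}\mathsf{fE}_2^1)$. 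The plan is to follow the blueprint of~\cite{MopCA,Swiss-CheeseNI} and derive the model from the framed little bundles model of~\cite[Section 5.4.2]{ThesisLW}, restricted to configurations with prescribed non-trivial holonomy around the frozen strand: the new generating objects encode the holonomy choices; the gauge-transformation generator together with relations (R1)--(R2) reproduces the conjugation behaviour of bundles under braiding; and the balancing generators $\theta$ and $\theta_*$ together with (R3)--(R4) capture the framing.

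Once this presentation is in place, an algebra $\mathcal{M} \colon \G\text{-}\mathsf{fBr}^1 \longrightarrow \Prc$ can be read off the generators and relations formally. The images of the objects $\mathbb{D}^{\g}_*$ assemble into the $\G$-grading $\mathcal{M} = \bigoplus_{\g \in \G} \mathcal{M}_{\g}$; the gauge-transformation generator, combined with relation (R1), produces the $\G$-action $\act^\mathcal{M}$ sending $\mathcal{M}_{\g}$ into $\mathcal{M}_{\g' \g {\g'}^{-1}}$; the module-multiplication generator yields the right $\mathcal{A}$-action $\triangleleft$; the marked-disk braiding generator together with relation (R2) gives the natural transformation $E^\g$; and the framing generators provide the balancings $\theta_{\mathcal{A}}$ and $\theta_{\mathcal{M}}^{\g}$. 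The two compatibility equations of Definition~\ref{df:eqbalancedmodule} are then exactly the algebraic translation of (R3) and (R4), with the first of them also recovering the braided-module relations of Definition~\ref{Def: Braided module}.

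The principal obstacle I anticipate is the rigorous verification of the presentation $\G\text{-}\mathsf{fBr}^1 \simeq \Pi_1(\G\text{-}\mathsf{fE}_2^1)$. As the authors note, even in the non-equivariant case the list of relations is over-determined, so one must isolate an independent subset and check that it surjects onto all $\pi_1$-relations. The natural approach is an induction on arity exploiting the fact that any $\G$-decoration on an operation is determined by the holonomies around the individual discs and the frozen strand, which are in turn built from products of the generating gauge transformations.
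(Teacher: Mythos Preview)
Your proposal is correct and follows essentially the same approach as the paper: identify point defects with algebras over $\G\text{-}\mathsf{fE}_2^1$, pass to fundamental groupoids via asphericity, derive the combinatorial presentation $\G\text{-}\mathsf{fBr}^1$ from the framed little bundles model of~\cite[Section 5.4.2]{ThesisLW}, and read off the algebraic structure from the generators and relations. The paper treats the proposition as a summary of this same discussion rather than giving a separate proof, and handles the ``principal obstacle'' you flag in exactly the way you anticipate, by deferring to the existing models in~\cite{ThesisLW,MopCA}. One small caveat: your assignment of relations to axioms is slightly off, since the paper's (R1)--(R4) do \emph{not} include the $\G$-action relations (those are left undepicted), and the first compatibility equation of Definition~\ref{df:eqbalancedmodule} corresponds to (R2) rather than (R3); indeed the paper notes that (R3) becomes redundant once (R4) is imposed.
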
 

\begin{example}
Let $\mathcal{C}$ be a boundary condition for a bulk theory $\mathcal{A}$. We can form a point defect from this boundary condition by removing a small 
circle around every marked point and inserting $\mathcal{C}$. On the algebraic level, the map from 
boundary conditions to point defects sends $\mathcal{C}$ to the $\G$-centre $\mathcal{Z}^\G(\mathcal{C})$ with the $\mathcal{A}$-action
induced by the functor $\mathcal{A} \longrightarrow \mathcal{Z}(\mathcal{C}) \subset \mathcal{Z}^\G(\mathcal{C})$. 
\end{example}    

\begin{remark}\label{Rem: braided modules}
In~\cite{bzbj2} a different approach to the description of point defects is taken: They are identified with modules over the value assigned to the annulus by factorisation homology equipped with the stacking tensor product. The same approach should work in the situation considered in this section, hence we expect that equivariant balanced modules 
over $\mathcal{A}$ can equivalently be described by graded modules over the graded centre $\mathcal{Z}^D(\mathcal{A})$ equipped with the stacking tensor product.   
\end{remark}

\begin{example}
Here we set $\G = \Out(G)$. For each element $\kappa \in \Out(G)$, let $h \in G$ act via $\kappa$-twisted conjugation $\Ad^\kappa_h(g) = hg\kappa(h^{-1})$ on $G$. Denote $C^\kappa \subset G$ the orbits of this action, i.e.~the $\kappa$-twisted conjugacy classes of $G$. For each $\kappa$-component of the $\Out(G)$-centre of $\Rep(G)$, we thus get a tensor functor 
$$
\int_{(\mathbb{S}^1,\kappa)} \Rep(G) \cong \QCoh(G/G) \to \QCoh(C^\kappa/G)
$$
where the $G$ acts by $\kappa$-twisted conjugation.
\end{example}

\subsubsection{Closed surfaces and marked points}\label{Sec:ClsdMfd}

We first compute the value of factorisation homology on a closed, unmarked surface $\Sigma$ equipped with a map $\varphi \colon \Sigma \longrightarrow B\G$. We use a decomposition of $\Sigma$ into a surface 
$\Sigma_o$ with one boundary component and a disk $\mathbb{D}$, see Figure~\ref{fig:clSurf}.

\begin{figure}[h!]
\begin{center}
	\begin{overpic}[scale=0.5
		,tics=10]
		{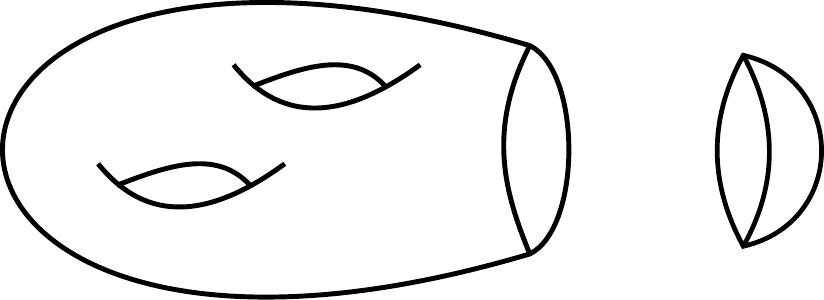}
		\put(40,10){$\Sigma_o$}
	\end{overpic}  
	\caption{The surface $\Sigma_o$ obtained from $\Sigma$ by removing a disk $\mathbb{D}$.}
\label{fig:clSurf}
\end{center}
\end{figure}
 
We denote by $\varphi_o$ the restriction of $\varphi$ to $\Sigma_o$ which has trivial holonomy around the boundary $\partial \Sigma_o$ since the bundle extends to $\Sigma$. Excision now implies that the value of factorisation homology on $\Sigma$ is given by  
the relative tensor product
\begin{align}\label{fhclosedsurface}
\int_{(\Sigma,\varphi)} \mathcal{A} \cong \int_{(\Sigma_o,\varphi_o)} \mathcal{A} \underset{{\int_{(\mathbb{S}^1 \times \R, \ast)} \mathcal{A}}}{\boxtimes}   \mathcal{A}  \ \ ,
\end{align}
where $\ast \colon \mathbb{S}^1 \times \R \longrightarrow B\G$ is the constant map at the base point. Given a decorated gluing pattern for $\Sigma_o$, we showed in Theorem~\ref{Thm: value surface} that one obtains identifications
$$
\int_{(\Sigma_o,\varphi_o)} \mathcal{A} \cong a_P^{\g_1, \dots \g_n}\text{-mod}_\mathcal{A}, \quad \int_{(\mathbb{S}^1\times \R, \ast)} \mathcal{A} \cong \mathcal{F}_{\mathcal{A}}^e\text{-mod}_\mathcal{A} \ \ ,
$$
via monadic reconstruction in $\mathcal{A}$. Now in order to compute the relative tensor product \eqref{fhclosedsurface}, we have to describe the categorical factorisation homology internal to the annulus category $\int_{\mathbb{S}^1 \times \mathbb{R}} \mathcal{A}$. The techniques to do so were developed in~\cite[Section 4]{bzbj2}, and we will briefly review the main results that will be used to compute factorisation homology on closed surfaces with $\G$-bundles.\par 
We first recall the notion of a quantum moment map, see \cite[Section 3]{qmomentmaps} for more details. For every $V \in \mathcal{A}$ we have a natural isomorphism, the so-called \emph{``field goal" isomorphism}~\cite[Corollary 4.6]{bzbj2} :
\begin{equation}\label{Eq: fieldgoal}
	\tau_V \colon \mathcal{F}_\mathcal{A} \otimes V \to V \otimes \mathcal{F}_\mathcal{A} , \quad \tau_V \coloneqq \tikzfig{fieldgoal} \ \ .
\end{equation}
Now let $A$ be an algebra in $\mathcal{A}$. A \emph{quantum moment map} is an algebra map $\mu \colon A \to \mathcal{F}_\mathcal{A}$ in $\mathcal{A}$ such that it fits into the following commutative diagram 
\begin{center}
\begin{tikzcd}
A \otimes \mathcal{F}_\mathcal{A} \arrow[dd,"\tau_A^{-1}",swap] \arrow[r,"\id \otimes \mu"] & A \otimes A \arrow[dr,"m"] & \\
 & & A \\
\mathcal{F}_\mathcal{A} \otimes A  \arrow[r,"\mu \otimes \id"]& A \otimes A \arrow[ur,"m",swap]
\end{tikzcd} 
\end{center}
It is shown in \cite[Corollary 4.7]{bzbj2} that algebras $A \in \int_{\mathbb{S}^1 \times \mathbb{R}} \mathcal{A}$ amount to the data of a quantum moment map $\mu \colon \mathcal{F}^e_\mathcal{A} \to A$. \par  
As mentioned in Remark~\ref{Rem: braided modules}, braided modules are identified in~\cite{bzbj2} with module categories over $\mathcal{F}_\mathcal{A}^e\text{-mod}_{\mathcal{A}}$, where the latter is equipped with the tensor product $\otimes_{\mathbb{R}}$ induced by stacking annuli in the radial direction. Let now $\mathcal{M}$ be a braided module category and assume there is a progenerator $m \in \mathcal{M}$ for the induced $\mathcal{A}$-action. In the situation at hand, $\mathcal{M} = \int_{(\Sigma_o,\varphi_o)} \mathcal{A}$ and the progenerator is the distinguished object given by the pointing via the inclusion of the empty manifold. The following reconstruction result for $\mathcal{M}$ is proven in~\cite[Theorem 1.1]{bzbj2}: There is an equivalence 
$$
\mathcal{M} \cong A\text{-mod}_{\int_{\mathbb{S}^1 \times \mathbb{R}}\mathcal{A}}, \quad A = \underline{\text{End}}_\mathcal{A}(m) \ \ ,
$$
where the endomorphism algebra comes with a canonical quantum moment map $\mu_{\Sigma_o} \colon \mathcal{F}_{\mathcal{A}} \longrightarrow A$. The right action of $\mathcal{F}_{\mathcal{A}}\text{-mod}_\mathcal{A}$ on $\mathcal{M}$ is then given by~\cite[Corollary 4.7]{bzbj2} :
\begin{align}
A\text{-mod}\boxtimes \mathcal{F}_{\mathcal{A}}\text{-mod} & \longrightarrow  A\text{-mod} \\
V \boxtimes X &\longmapsto V \otimes_{\mathcal{F}_{\mathcal{A}}} X \ \ ,
\end{align}          
where the algebra homomorphism $\mu_{\Sigma_o}$ and the field goal transformations are used to form the relative tensor product. 

\begin{remark}
Conversely, given an algebra $A \in \mathcal{A}$ and a quantum moment map $\mu \colon \mathcal{F}_\mathcal{A} \to A$, the category $\mathcal{M} = A\text{-mod}_\mathcal{A}$ is equipped with the structure of a braided module category. We refer to~\cite[Section 4.3]{bzbj2} for an explicit description of the braided module structure that one obtains from the given quantum moment map $\mu$. 
\end{remark}

Applying the above reconstruction result to the situation at hand, we get quantum moment maps 
\begin{align}
\mu_{\Sigma_o} \colon \mathcal{F}_{\mathcal{A}} \longrightarrow a_P^{\g_1,\dots \g_n} \ \ \text{and } \ \ \mu_{\mathbb{D}} \colon \mathcal{F}_{\mathcal{A}} \longrightarrow 1_\mathcal{A} \ \ ,
\end{align}
which endow $a_P^{d_1,\dots d_n}$ and $1_\mathcal{A}$ with the structure of algebras in $\mathcal{F}_{\mathcal{A}}\text{-mod}_\mathcal{A}$. Finally, by~\cite[Corollary 4.8]{bzbj2}, we get:
\begin{proposition}
The factorisation homology on a closed decorated surface $(\Sigma,\varphi)$ is given by
\begin{align}
\int\displaylimits_{(\Sigma,\varphi)} \mathcal{A} \cong (a_P^{\g_1,\dots , \g_n}\text{-}\mathrm{mod}\text{-}1_\mathcal{A})_{\mathfrak{F}_{\mathcal{A}}\text{-}\mathrm{mod}_\mathcal{A} } \ \ ,
\end{align}     
the category of $a_P^{\g_1,\dots , \g_n}$-$1_\mathcal{A}$-bimodules inside $\mathcal{F}_{\mathcal{A}}\text{-}\mathrm{mod}_\mathcal{A}$.
\end{proposition}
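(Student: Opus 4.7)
My plan is to compute $\int_{(\Sigma,\varphi)}\mathcal{A}$ by excision along a collar-gluing that separates a disk from the rest of the surface, and then interpret the resulting relative tensor product internal to the annulus category via quantum moment maps, following \cite[Section 4]{bzbj2}. First I choose a collar-gluing $\Sigma = \Sigma_o \cup_{N \times (-1,1)} \mathbb{D}$ where $N \cong \mathbb{S}^1$ is the boundary of the removed disk. Since $\varphi$ extends over $\mathbb{D}$, its holonomy around $N$ is trivial, so I can trivialise the $\G$-structure on the collar compatibly with the extension to the disk. In particular, the twisting phenomenon of Section~\ref{sec: excision D-mfd} does not arise, and excision gives
\begin{align}
\int_{(\Sigma,\varphi)} \mathcal{A} \cong \int_{(\Sigma_o,\varphi_o)} \mathcal{A} \underset{\int_{(\mathbb{S}^1 \times \R, \ast)} \mathcal{A}}{\boxtimes} \int_\mathbb{D} \mathcal{A}
\end{align}
with untwisted module structures on both factors.

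The second step is to identify the three ingredients. Theorem~\ref{Thm: value surface} applied to $(\Sigma_o,\varphi_o)$ gives $\int_{(\Sigma_o,\varphi_o)}\mathcal{A} \cong a_P^{\g_1,\dots,\g_n}\text{-mod}_\mathcal{A}$, realised through the distinguished $\mathcal{A}$-progenerator $\operatorname{Dist}_{\Sigma_o}$ and its internal endomorphism algebra. Similarly $\int_\mathbb{D} \mathcal{A} \cong \mathcal{A} \cong 1_\mathcal{A}\text{-mod}_\mathcal{A}$, and the untwisted annulus category is $\mathcal{F}_\mathcal{A}\text{-mod}_\mathcal{A}$ under the stacking tensor product. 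I then need to promote $a_P^{\g_1,\dots,\g_n}$ and $1_\mathcal{A}$ to algebras in the monoidal category $\mathcal{F}_\mathcal{A}\text{-mod}_\mathcal{A}$. This is done precisely by constructing the quantum moment maps $\mu_{\Sigma_o}\colon \mathcal{F}_\mathcal{A} \to a_P^{\g_1,\dots,\g_n}$ and $\mu_\mathbb{D}\colon \mathcal{F}_\mathcal{A} \to 1_\mathcal{A}$ of \cite[Theorem 1.1]{bzbj2}, which arise functorially from the action of the annulus on each factor via collar embeddings combined with the universal property of the internal endomorphism algebra of the respective progenerator.

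The final step is to apply \cite[Corollary 4.8]{bzbj2}: for two algebras equipped with quantum moment maps, the relative Deligne--Kelly tensor product of their categories of modules over $\mathcal{F}_\mathcal{A}\text{-mod}_\mathcal{A}$ is equivalent to the category of bimodules internal to $\mathcal{F}_\mathcal{A}\text{-mod}_\mathcal{A}$. Combined with the identification of the previous step, this immediately yields the claimed formula. The main technical point that needs verification is that the $\mathcal{F}_\mathcal{A}\text{-mod}_\mathcal{A}$-module structure on $\int_{(\Sigma_o,\varphi_o)}\mathcal{A}$ inherited from the collar-gluing indeed coincides with the one induced by the quantum moment map $\mu_{\Sigma_o}$ coming from Barr--Beck reconstruction applied to $\operatorname{Dist}_{\Sigma_o}$; since the relevant collar carries a trivial $\G$-bundle, this check is parallel to the undecorated case of \cite{bzbj2} and no new subtleties from the $\G$-decorations arise.
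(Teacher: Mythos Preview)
Your proposal is correct and follows essentially the same route as the paper: decompose $\Sigma$ into $\Sigma_o$ and a disk along a collar with trivial holonomy, apply excision, identify the pieces via Theorem~\ref{Thm: value surface}, upgrade $a_P^{\g_1,\dots,\g_n}$ and $1_\mathcal{A}$ to algebras in $\mathcal{F}_\mathcal{A}\text{-mod}_\mathcal{A}$ using the quantum moment maps of \cite[Theorem~1.1]{bzbj2}, and conclude with \cite[Corollary~4.8]{bzbj2}. Your final remark about checking that the collar-induced module structure matches the one from reconstruction is a reasonable point of care, but as you note it reduces to the undecorated case.
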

 
\begin{remark}
Let $\{x_1, \dots, x_r\} \subset \Sigma$ be a collection of marked points on the surface and $\varphi \colon \Sigma \setminus \{x_1, \dots, x_r\} \to B\G$ a continuous map. Let $\Sigma_o$ be the surface obtained from $\Sigma$ by removing a small disk $\mathbb{D}^{\g_i}$ around each point $x_i$, where the label $\g_i$ indicates that the holonomy of $\varphi$ around the $i$-th boundary component $\partial_i \Sigma_o$ is given by the group element $\g_i \in \G$. Let $\mathcal{M} = \bigoplus_{\g \in \G} \mathcal{M}_d$ be an equivariant balanced right module over $\mathcal{A}$. Applying excision, we can express factorisation homology over the marked surface $\Sigma$ via the following relative tensor product: 
$$
\int_{((\Sigma,\varphi),\{x_1, \dots, x_r\})} (\mathcal{A},\mathcal{M}) \cong
\int_{(\Sigma_o, \varphi)} \mathcal{A} \underset{\big( \int_{(\mathbb{S}^1,\g_1)}\mathcal{A} \boxtimes \dots \boxtimes \int_{(\mathbb{S}^1,\g_r)} \mathcal{A} \big) }{\boxtimes} \Big( \mathcal{M}_{\g_1} \boxtimes \dots \boxtimes \mathcal{M}_{\g_r} \Big) \ \ .
$$
\end{remark}

\section{Quantisation of flat twisted bundles}\label{Sec: QCV}

In this section we describe the Poisson algebra of functions on the moduli space of flat $\Out(G)$-twisted $G$-bundles on an oriented surface $\Sigma$ and its quantisation via factorisation homology over $\Sigma$ with coefficients in the ribbon category $\Rep_q(G)$ equipped with the $\Out(G)$-action defined in Section~\ref{Ex: Aut-extension}. 

\subsection{The moduli space of flat twisted bundles}\label{Sec:ModuliBunTwisted}

We first recollect some background about twisted bundles in the differential geometric setting, see for example \cite{Meinrenken:Convexity} and \cite{Zerouali} for more details and~\cite{2DYM} for the non-flat version. We refer to~\cite{WildChar} for the original algebraic geometric definition and extension to wild character varieties. Let $\Sigma$ be an oriented surface equipped with a principal $\Out(G)$-bundle $\mathcal{P} \longrightarrow \Sigma$. The group homomorphism $G \rtimes \Out(G) \longrightarrow \Out(G)$, given by projection onto the second factor, induces a morphism of smooth groupoids\footnote{Here smooth groupoids can, for example, be modelled as sheaves of groupoids on the site of Cartisan space as in~\cite[Section 5.1]{2Group}. We will not go into the details here because they will not be important for what follows.} $\Bun^{\text{flat}}_{\Out(G)\rtimes G}(\Sigma) \longrightarrow \Bun_{\Out(G)}(\Sigma)$. The \emph{groupoid of flat $\mathcal{P}$-twisted $G$-bundles} is defined as the homotopy pullback
\begin{equation}
\begin{tikzcd}
\Bun_{G\downarrow \mathcal{P}}^{\text{flat}} (\Sigma) \ar[d] \ar[r] & \Bun^{\text{flat}}_{G\rtimes \Out(G)} (\Sigma) \ar[d] \\ 
\star \ar[r,"\mathcal{P}"]  & \Bun_{\Out(G)}(\Sigma) \ \ .
\end{tikzcd}
\end{equation} 
The \emph{trivial $\mathcal{P}$-twisted $G$-bundle} is the bundle $\mathcal{P}\times_{\Out(G)} (G\rtimes \Out(G))$ associated to $\mathcal{P}$ using the group homomorphism $\Out(G)\hookrightarrow G\rtimes \Out(G)$, $\kappa \longmapsto 1\rtimes \kappa$. 
Note that the automorphisms of the trivial flat $\mathcal{P}$-twisted $G$-bundle are $G^{\pi_0(\Sigma)}$ and not $(G\rtimes \Out(G))^{\pi_0(\Sigma)}$ as one might naively expect.

\begin{remark}
The moduli space of flat $\Out(G)$-twisted bundles on a closed surface $\Sigma$ was studied in the differential geometric setting in~\cite{Meinrenken:Convexity,Zerouali}. In particular, it is shown in \emph{loc.~cit.~}that the moduli space of $\Out(G)$-twisted flat bundles for a compact Lie group $G$ carries a canonical Atiyah-Bott like symplectic structure. Similar symplectic structures have been constructed in the algebraic geometric setting on of (wild) twisted character varieties in~\cite{WildChar}.  
\end{remark} 

Since in this paper we obtain our results in the algebraic setting, we will now give another description of flat twisted bundles that is more suitable for us, namely the holonomy description of twisted $G$-bundles. We will only consider surfaces $\Sigma$ with at least one boundary component and a marked point $v \in \partial \Sigma$ on one of the boundary circles. For brevity we write simply $\pi_1(\Sigma)$ for $\pi_1(\Sigma,v)$. For any group $G$, we call the space of group homomorphisms $\Hom(\pi_1(\Sigma),G)$ the $G$-representation variety. It comes with a natural action of $G$ via conjugation: $g.\varphi(\gamma) = g\varphi(\gamma)g^{-1}$ for all $g \in G$, $\gamma \in \pi_1(\Sigma)$ and $\varphi \in \Hom(\pi_1(\Sigma),G)$. As before, we fix a principal $\Out(G)$-bundle, here described by a group homomorphism $\rho \colon \pi_1(\Sigma)\longrightarrow \Out(G)$. Such a map $\rho$ is given by picking an element $\kappa \in \Out(G)$ for every generator in $\pi_1(\Sigma)$. Then, an element in the \emph{$\rho$-twisted $G$-representation variety} is a lift
\begin{equation}
\begin{tikzcd}
 & G\rtimes \Out(G) \ar[d] \\ 
 \pi_1(\Sigma) \ar[r, "\rho", swap] \ar[ru, dashed] & \Out(G) \ \ .
\end{tikzcd}
\end{equation}  
We write $\Hom_\rho(\pi_1(\Sigma),G)$ to denote the space of lifts. 
Concretely, elements in $\Hom_\rho(\pi_1(\Sigma),G)$ can be described by maps $\varphi \colon \pi_1(\Sigma) \longrightarrow G$, which are such that $\varphi(\gamma_1 \circ \gamma_2) = 
\varphi(\gamma_1) \rho(\gamma_1).\varphi(\gamma_2)$. The group $G$ acts via twisted conjugation, i.e.~the action of an element $g \in G$ is given by $\varphi(\gamma) \longmapsto g \varphi(\gamma) \rho(\gamma).g^{-1}$. Given a set $E$ of free generators of $\pi_1(\Sigma)$, we get an identification $\Hom_\rho(\pi_1(\Sigma),G) \cong G^E$. \par  
There is a bijective correspondence between elements in the twisted representation variety $\Hom_\rho(\pi_1(\Sigma),G)$ and elements in 
$$
\mathcal{M}^\circ_\rho(\Sigma) \coloneqq \{\text{Isomorphism classes of flat twisted }G\text{-bundles with trivialisation over }v \in \Sigma\} \ \ ,
$$
which is established via the holonomy map. The group $G$ acts on $\mathcal{M}_\rho^\circ(\Sigma)$ by changing the trivialisation. The moduli space of flat twisted bundles is then given by the quotient stack
$$
\mathcal{M}_\rho(\Sigma) = \mathcal{M}_\rho^\circ(\Sigma) /^\rho G  \ \ ,
$$
where the notation $/^\rho$ indicates that $G$ acts via twisted conjugation.

\subsubsection{The twisted Fock-Rosly Poisson structure}

For the remainder of this section, $\Sigma$ is a connected surface with at least one boundary component. We will give an explicit description of the Poisson structure on  $\mathcal{M}^\circ_\rho(\Sigma)$, following the strategy of Fock and Rosly~\cite{FRPoisson} using lattice gauge theory. \par 
We choose a ciliated fat graph model for $\Sigma$ with one vertex and edges $E= \{e_1,\dots , e_n\}$, constructed from a gluing
pattern for $\Sigma$ as defined in Section \ref{sec:compFH}. Furthermore, we choose an $\Out(G)$-labeling $\{\kappa_1,\dots , \kappa_n\}$ of the gluing pattern describing the twisting principal $\Out(G)$-bundle $\rho$. The fundamental group of $\Sigma$ is freely generated by the edges $E$ of the graph model, as depicted in Figure \ref{fig:graphmodel}. Using the holonomy description from the previous section, we can characterise a $\rho$-twisted bundle on $\Sigma$ by a graph connection, that is a labeling of every edge $e_i \in E$ with a group element $g_i\in G$:
$$
\text{hol} \colon \mathcal{M}_{\rho}^\circ(\Sigma) \xrightarrow{\cong } \Hom_{\rho}(\pi_1(\Sigma, v),G) = G^E \ \ .
$$
This identification chooses an orientation for every edge in the fat graph model which we choose to agree with the natural orientation coming from the gluing pattern. Hence, we get an identification 
$$
\mathcal{M}_{\rho}(\Sigma)\cong G^E /^{\rho} G \ \ ,
$$
where $h\in G$ acts via twisted conjugation 
\begin{equation}
(g_{e_1},\dots g_{e_n}) \longmapsto (h g_{e_1} \kappa_1(h)^{-1},\dots, h g_{e_n} \kappa_n(h)^{-1}) \ \ .
\end{equation} 

In this way, we consider the algebraic functions on $G^E$ as an element of $\Rep(G)$ and we denote this algebra by $\mathcal{O}^\rho(G^E)$. Quasi-coherent sheaves on $\mathcal{M}_\rho(\Sigma)$ can now be identified with modules over $\mathcal{O}^\rho(G^E)$ in $\Rep(G)$. 

\begin{proposition}
Let $\Sigma$ be a surface of genus $g$ and with $r \geq 1$ boundary 
components. Given a principal $\Out(G)$-bundle $\rho\colon \pi_1(\Sigma) \longrightarrow \Out(G)$, described by the elements $\kappa_1,\dots,\kappa_{2g+r-1}\in \Out(G)$, and a gluing pattern $P$ for $\Sigma$, there is an isomorphism $\mathcal{O}^\rho(G^{2g+r-1}) \cong a_P^{\kappa_1, \dots , \kappa_{2g+r-1}}$ of algebras in $\Rep(G)$. 
\end{proposition}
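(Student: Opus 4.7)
The plan is to exploit that $\Rep(G)$ is a \emph{symmetric} tensor category and reduce the statement to a direct comparison of the underlying $G$-modules with their algebra structures, factor by factor.

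First I would verify that, for each $\kappa \in \Out(G)$, the twisted coend $\mathcal{F}^{\kappa}_{\Rep(G)}$ is isomorphic as an algebra in $\Rep(G)$ to $\mathcal{O}^{\kappa}(G)$: namely the regular function algebra with pointwise multiplication and $G$-action given by the $\kappa$-twisted coadjoint action $(h.f)(g) = f(h^{-1} g \kappa(h))$. This is the classical limit of Example \ref{ex:CoendRep(H)}. The universal $R$-matrix on $\Rep(G)$ is trivial, so the cocycle that twists the FRT algebra into the REA collapses, and $\mathcal{F}^{\kappa}_{\Rep(G)}$ reduces to $(\mathcal{O}(G), \ad^{*}_{\kappa})$; by Peter–Weyl this is precisely $\mathcal{O}^{\kappa}(G)$.

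Next I would analyse the multiplication on $a_P^{\kappa_1,\ldots,\kappa_n} = \bigotimes_{i=1}^n \mathcal{F}^{\kappa_i}_{\Rep(G)}$. Because $\sigma_{Y,X}\circ \sigma_{X,Y} = \id$ in a symmetric category, every crossing morphism $L^{\pm}$, $N^{\pm}$, $U^{\pm}$ from Figure~\ref{Fig:crossingmorphisms} collapses to the ordinary swap, independently of the particular gluing pattern $P$. Consequently the algebra structure on $a_P^{\kappa_1,\ldots,\kappa_n}$ becomes the componentwise tensor product of the commutative algebras $\mathcal{F}^{\kappa_i}_{\Rep(G)}$, and we obtain an isomorphism of commutative algebras in $\Rep(G)$ of $a_P^{\kappa_1,\ldots,\kappa_n}$ with $\bigotimes_i \mathcal{O}^{\kappa_i}(G) \cong \mathcal{O}(G^n)$, where the $G$-action is the tensor product of the individual $\kappa_i$-twisted coadjoint actions.

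Finally I would compare the resulting $G$-action with the one carried by $\mathcal{O}^{\rho}(G^{n})$. Dualising the $\rho$-twisted conjugation action $(g_1,\dots,g_n) \mapsto (h g_1 \kappa_1(h^{-1}),\dots, h g_n \kappa_n(h^{-1}))$ from Section~\ref{Sec:ModuliBunTwisted} on $\Hom_{\rho}(\pi_1(\Sigma),G) \cong G^n$, one gets exactly the diagonal $\kappa_i$-twisted coadjoint action on $\bigotimes_i \mathcal{O}(G)$ produced in the previous step. The only real obstacle is bookkeeping of conventions: matching the orientations of the edges of the fat-graph model with those used to define $\Hom_{\rho}(\pi_1(\Sigma),G)$, and confirming that the sides and directions of the twists ($\kappa$ versus $\kappa^{-1}$, left versus right) agree between the coend formula~\eqref{eq:twistedcoend} and the holonomy description. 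Once this alignment is fixed for one factor, the tensor product case follows automatically from the collapse of the crossing morphisms.
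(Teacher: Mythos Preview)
Your proposal is correct and follows the same route as the paper: identify each factor $\mathcal{F}^{\kappa}_{\Rep(G)}$ with $\mathcal{O}^{\kappa}(G)$ via Peter--Weyl and Example~\ref{ex:CoendRep(H)}, then tensor. In fact you are more explicit than the paper's own proof, which only spells out the single-factor identification at the level of $G$-modules; your observation that in the symmetric category $\Rep(G)$ all crossing morphisms $L^{\pm}, N^{\pm}, U^{\pm}$ degenerate to the plain swap (so the $P$-dependence disappears and the product on $a_P^{\kappa_1,\dots,\kappa_n}$ is the commutative tensor product of the $\mathcal{O}^{\kappa_i}(G)$) is exactly the point the paper leaves to the reader.
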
  
   
\begin{proof}
To establish the isomorphism on the level of vector spaces, we use the algebraic Peter-Weyl theorem:
$$
\mathcal{O}(G) \cong \bigoplus_{V} V^\vee \otimes V \ \ ,
$$
where the sum on the right hand side is over all irreducible representations of $G$ and $\mathcal{O}(G)$ is the Hopf algebra of matrix coefficients of irreducible $G$-representations. Next we take into account the twist by a given automorphism $\kappa \in \Out(G)$: a group element $h \in G$ acts on $\phi \in \mathcal{O}^\kappa(G)$ via $h \triangleright \phi = \phi(h^{-1}(-)\kappa(h))$. As explained in Example \ref{ex:CoendRep(H)}, we thus get an isomorphism $\mathcal{O}^\kappa(G) \cong \bigoplus_{V} V^\vee \otimes \kappa^*V = \mathcal{F}_{\Rep(G)}^\kappa$ compatible with the $G$-action. 
\end{proof}

In combination with Theorem \ref{Thm: value surface}, the above result shows that $\int_{(\Sigma, \rho)} \Rep(G)$ agrees with the category of quasi-coherent sheaves on the moduli space $\mathcal{M}_{\rho}(\Sigma)$ of twisted bundles. Note that $G^{E}$ is a finite dimensional smooth algebraic variety and independent of the concrete form of the gluing pattern or topology of $\Sigma$. However, we will see shortly that the Poisson structure is sensitive to the topology. 

\begin{figure}[h!]
\begin{center}
\vspace{0.25cm}
	\begin{overpic}[scale=0.4
		,tics=10]
		{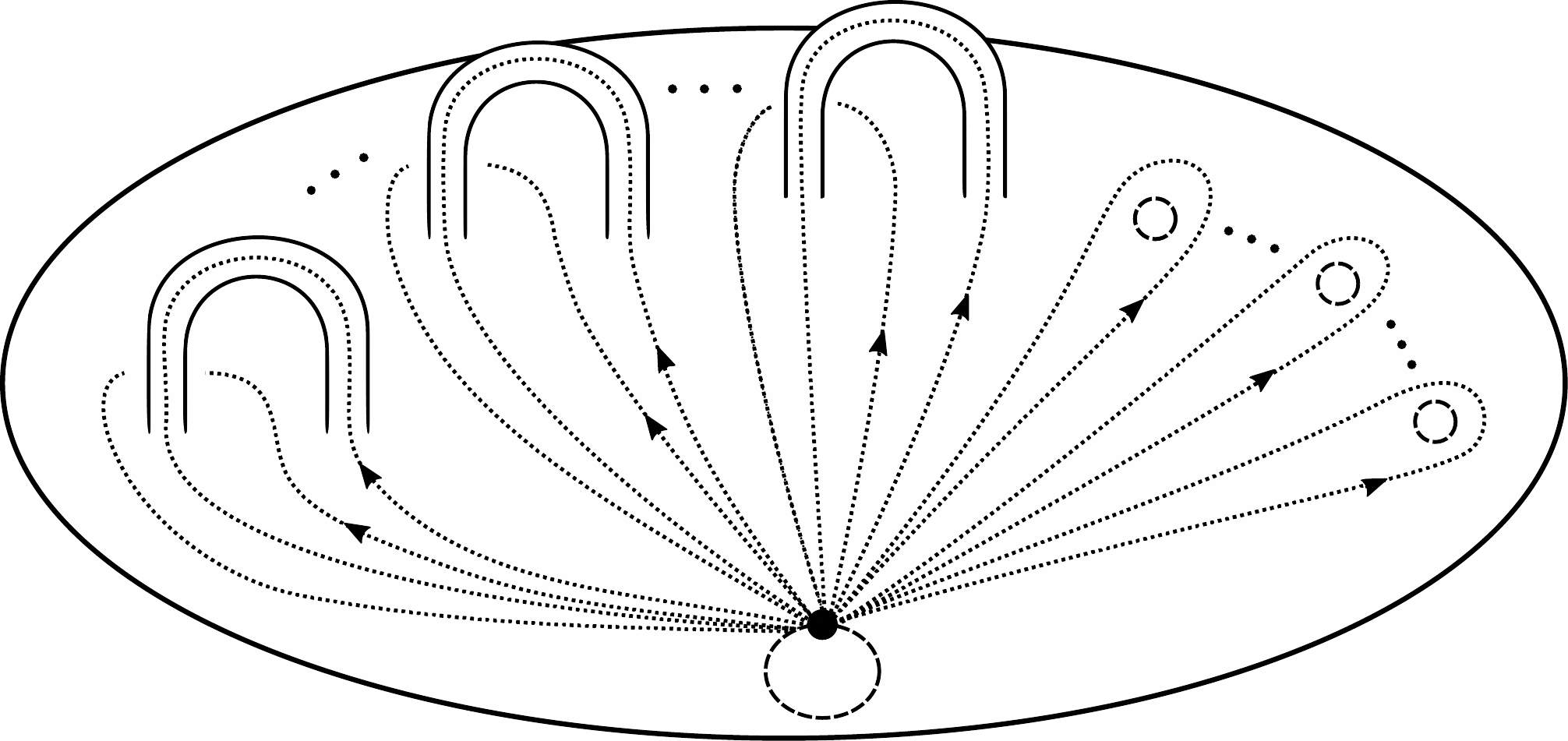}
	\end{overpic}  
	\vspace{0.5cm}
	\caption{Generators of the fundamental group for an $r$-punctured genus $g$ surface.}
\label{fig:graphmodel}
\end{center}
\end{figure}

In order to describe the Poisson structure on the representation variety $\mathcal{M}_\rho^\circ(\Sigma)$, we notice that there is an equivariant embedding 
\begin{align}
\iota \colon G^{E} & \longrightarrow (G\rtimes \Out(G))^E \\
(g_{e_1},\dots g_{e_n}) & \longmapsto (g_{e_1}\rtimes \kappa_1,\dots g_{e_n} \rtimes \kappa_n)
\end{align}
which identifies $G^E$ with a connected component of $(G\rtimes \Out(G))^E$ since $\Out(G)$ is discrete. The $G$-action on the right side is via the embedding $G\longrightarrow G\rtimes \Out(G)$ and conjugation inside $G\rtimes \Out(G)$. Using the gluing pattern for $\Sigma$, together with the choice of an $\Out(G)$-invariant classical r-matrix\footnote{For example, the semi-classical limit of the quantum R-matrix $\mathcal{R}$ of $U_\hbar(\mathfrak{g})$ is $\Out(G)$-invariant, see Proposition \ref{prp:OutGactionRephG}.} $r\in \left(\mathfrak{g} \otimes \mathfrak{g}\right)^{\Out(G)}$, Fock and Rosly's construction~\cite{FRPoisson} gives a Poisson structure $\pi_{\text{FR}}$ on $(G\rtimes \Out(G))^E$, such that the action of $G\rtimes \Out(G)$ is Poisson-Lie. Pulling back $\pi_{\text{FR}}$ along $\iota$, we get the desired Poisson structure on $\mathcal{M}_\rho^{\circ}(\Sigma)$, which is compatible with the twisted $G$-action. In Proposition \ref{prp:twistedFR} below we give an explicit formula for the Poisson structure $\pi_{\Ma_\rho^\circ(\Sigma)}$ we just described on $\mathcal{M}_{\rho}^\circ(\Sigma)$, which is a twisted version of the Fock-Rosly Poisson structure on $G^E$ given in \cite[Proposition 3]{FRPoisson}. 

\begin{proposition}
Let the surface $\Sigma$ be represented by a ciliated fat graph with one vertex $v$ and a set $E$ of edges. Let $(x_i)_{i = 1, \dots, \text{dim}(\mathfrak{g})}$ be a basis of $\mathfrak{g}$. Then for a given choice $r = r^{ij} x_i \otimes x_j \in \left(\mathfrak{g} \otimes \mathfrak{g}\right)^{\Out(G)}$ of $\Out(G)$-invariant classical $r$-matrix there is a Poisson structure on $\Ma_\rho^\circ(\Sigma)$ given by the bivector
$$
\pi_{\Ma_\rho(\Sigma)} = \sum_{\alpha \prec \beta}  r^{ij} x_i(\alpha) \wedge x_j(\beta) + \frac{1}{2} \sum_{\alpha} r^{ij} x_i(\alpha) \wedge x_j(\alpha)
$$
where $\alpha$ and $\beta$ run over the set of half-edges\footnote{We break up the edges of the graph, so that from each edge we get an incoming and an outgoing half-edge at the vertex $v$. Since the chosen graph is ciliated, we get an ordering $\prec$ on the set of half-edges.} and
$$
x_i(\alpha) \coloneqq \begin{cases} -x_i^R(\alpha), & \alpha \text{ is incoming at v} \\
(\kappa_\alpha)_*x_i^L(\alpha), & \alpha \text{ is outgoing at v}
\end{cases}
$$
where $x^{R/L}_i(\alpha)$ denotes the right/left-invariant vector field of $x_i$ acting on the $\alpha$-copy of $G^E$. Furthermore, the induced Poisson structure on the subalgebra of $G$-invariant functions is independent of the chosen fat graph model for $\Sigma$.
\end{proposition}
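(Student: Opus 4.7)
The strategy is the one already sketched before the statement: pull the Fock--Rosly bracket on $(G\rtimes \Out(G))^E$ back along the equivariant embedding $\iota\colon G^E\hookrightarrow (G\rtimes \Out(G))^E$. Writing $H:=G\rtimes \Out(G)$ and using that $\Out(G)$ is finite, so that $\mathrm{Lie}(H)=\mathfrak{g}$, the hypothesis $r\in(\mathfrak{g}\otimes \mathfrak{g})^{\Out(G)}$ combined with the usual $\Ad(G)$-invariance of the symmetric part of $r$ makes $r$ an $\Ad(H)$-invariant classical $r$-matrix on $\mathfrak{h}=\mathfrak{g}$. Applying Fock--Rosly's theorem to the ciliated fat graph model of $\Sigma$ viewed as an $H$-graph produces a Poisson bivector
\begin{align}
\pi^H_{\mathrm{FR}}=\sum_{\alpha\prec\beta}r^{ij}X_i^H(\alpha)\wedge X_j^H(\beta)+\tfrac{1}{2}\sum_\alpha r^{ij}X_i^H(\alpha)\wedge X_j^H(\alpha)
\end{align}
on $H^E$, where $X^H_i(\alpha)=-X_i^R$ when $\alpha$ is incoming and $X^H_i(\alpha)=X_i^L$ when $\alpha$ is outgoing (as vector fields on $H$). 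Because $\Out(G)$ is discrete, the image of $\iota$ is the union of connected components $\prod_\alpha (G\times\{\kappa_{e_\alpha}\})$ inside $H^E$, so $\iota^*\pi^H_{\mathrm{FR}}$ is automatically a Poisson bivector on $G^E$.

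The only remaining computation is to evaluate the pullbacks of left- and right-invariant vector fields along $\iota$. Using the semi-direct product law $(g_1,\kappa_1)(g_2,\kappa_2)=(g_1\kappa_1(g_2),\kappa_1\kappa_2)$, a direct check of the definitions shows that on each component $G\times\{\kappa\}\subset H$ one has
\begin{align}
X^R_H\big|_{G\times\{\kappa\}}=X^R_G,\qquad X^L_H\big|_{G\times\{\kappa\}}=(\kappa_* X)^L_G.
\end{align}
Substituting these identities into $\pi^H_{\mathrm{FR}}$ and reading off the contribution of each edge $e$ with its label $\kappa_e$ produces precisely the bivector $\pi_{\Ma_\rho(\Sigma)}$ stated in the proposition. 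The $\rho$-twisted conjugation action of $G$ on $G^E$ is the restriction along $\iota$ of the $H$-conjugation action on $H^E$, and since the latter is Poisson--Lie for $\pi^H_{\mathrm{FR}}$ by the classical Fock--Rosly theorem, its restriction preserves $\iota^*\pi^H_{\mathrm{FR}}$.

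For the graph-independence claim, observe that $\iota$ descends to an open-closed embedding of moduli stacks $\Ma_\rho(\Sigma)=G^E/^\rho G \hookrightarrow H^E/H=:\Ma_H(\Sigma)$, realising $\Ma_\rho(\Sigma)$ as a union of connected components of $\Ma_H(\Sigma)$. The classical Fock--Rosly theorem states that the Poisson structure induced by $\pi^H_{\mathrm{FR}}$ on $H$-invariant functions on $H^E$ depends only on $\Sigma$ and not on the chosen ciliated fat graph. Restricting this bracket along the above open-closed embedding yields the bracket on $G$-invariant functions on $G^E$, so the induced Poisson structure on $\Ma_\rho(\Sigma)$ is also graph-independent. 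The main technical subtlety in this argument is the vector-field calculation of the second paragraph: one has to track carefully that the $\kappa_\alpha$-twist attaches to the left-invariant (outgoing) vector fields and not to the right-invariant (incoming) ones, and also check that the $\Out(G)$-invariance of $r$ is precisely what is needed to make $\pi^H_{\mathrm{FR}}$ well-defined in the first place. Everything else is an immediate consequence of Fock--Rosly's theorem together with the elementary observation that any bivector on $H^E$ restricts freely to a union of connected components.
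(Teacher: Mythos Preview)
Your proof is correct and follows exactly the strategy the paper sketches in the paragraph preceding the proposition: pull back the Fock--Rosly Poisson structure on $(G\rtimes\Out(G))^E$ along the equivariant embedding $\iota$, using that $\Out(G)$ is discrete so that $G^E$ sits as a union of connected components. The paper does not spell out a proof beyond this sketch, and you have correctly supplied the key missing computation, namely that $X^R_H|_{G\times\{\kappa\}}=X^R_G$ and $X^L_H|_{G\times\{\kappa\}}=(\kappa_*X)^L_G$, which is precisely what produces the twist on the outgoing half-edges in the stated formula.
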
\label{prp:twistedFR}
 
\subsection{Quantisation}

In Section \ref{sec:compFH} we constructed an algebra $a_P^{\kappa_1, \dots, \kappa_n}$, $n = 2g + r -1$, from a combinatorial presentation of the decorated surface $\Sigma$. We now explain how these algebras provide a deformation quantisation of the twisted Fock-Rosly Poisson structure on $\mathcal{M}_\rho^\circ(\Sigma)$. To that end, we consider $a_P^{\kappa_1, \dots, \kappa_{n}}$ as an object in the representation category $\Rep_\hbar(G)$ of the formal quantum group. It is the tensor product $\bigotimes_{i=1}^{n} \mathcal{O}_\hbar^{\kappa_i}(G)$, where each $\mathcal{O}_\hbar^{\kappa_i}(G)$ is a $\kappa_i$-twisted REA of quantised coordinate functions. The multiplication on the tensor product is defined in terms of the crossing morphisms depicted in Figure \ref{Fig:crossingmorphisms}. We will show in Theorem \ref{Thm:QuantisationTwistedFR} that for all elements $f_{\hbar}^{\kappa_i} \in \mathcal{O}_\hbar^{\kappa_i}$ and $g_\hbar^{\kappa_j} \in \mathcal{O}_\hbar^{\kappa_j}$ we have
$$
\frac{[f_\hbar^{\kappa_i},g_\hbar^{\kappa_j}]}{\hbar} \text{ mod}(\hbar) = \{ f^{\kappa_i},g^{\kappa_j} \} \ \ ,
$$
where $\{ \cdot , \cdot \}$ is the twisted Fock-Rosly Poisson structure from Proposition \ref{prp:twistedFR}, and $f^{\kappa_i} = f_\hbar^{\kappa_i} \text{ mod}(\hbar) \in \mathcal{O}^{\kappa_i}(G)$, and similarly for $g^{\kappa_j}$.
\par 
We present a reformulation of the Poisson structure on $\mathcal{M}_\rho^\circ(\Sigma)$ that will prove useful for what follows. Let $r = \omega + t$ be the decomposition of the classical r-matrix into an anti-symmetric part $\omega$ and an invariant symmetric element $t$. For a given automorphism $\kappa \in \Out(G)$, define the bivector field 
\begin{align}
\pi^\kappa_{\text{STS}} \coloneqq \omega^{\ad(\kappa),\ad(\kappa)}+t^{R,L(\kappa)}-t^{L(\kappa),R} \ \ ,
\end{align}
where the superscripts indicate that the action by left-invariant vector fields is twisted by the automorphism $\kappa$, and we used the notation $x^{\ad(\kappa)} = x^R - \kappa_*x^L$ for the vector field
generated by the element $x \in \mathfrak{g}$ via the twisted adjoint action $h \mapsto gh\kappa(g^{-1})$ of $G$ on itself. In the case $\kappa = e$, the bivector field $\pi^e_{\text{STS}}$ agrees with the Semenov-Tian-Shansky (STS) Poisson structure on $G$, see~\cite{STSPoisson}. Using the decorated gluing pattern $(P, \{\kappa_1, \dots, \kappa_{2g+n-1}\})$ for $\Sigma$, we define the bivector
\begin{align}
	\pi = \sum_{\alpha \in E} \pi_{\text{STS}}^{\kappa_\alpha} + \sum_{\alpha < \beta} \pi_{\alpha, \beta}- \pi_{\beta, \alpha} \ \ ,
\end{align}  
where $\pi_{\alpha, \beta}$ is a 2-tensor, acting on the $\alpha$-component of the first factor and on the $\beta$-component of the second factor of $G^E \times G^E$, and is defined by
\begin{align}
	\pi_{\alpha ,\beta} \coloneqq \begin{cases} 
		- r_{2,1}^{\ad(\kappa_\alpha),\ad(\kappa_\beta)} & \text{, if $\alpha$ and $\beta$ are positively unlinked} \\
		- r_{2,1}^{\ad(\kappa_\alpha),\ad(\kappa_\beta)} - 2t^{L(\kappa_\alpha),R} & \text{, if $\alpha$ and $\beta$ are positively linked} \\
		-r_{2,1}^{\ad(\kappa_\alpha),\ad(\kappa_\beta)} - 2t^{L(\kappa_\alpha),R} + 2t^{L(\kappa_\alpha),L(\kappa_\beta)} & \text{, if $\alpha$ and $\beta$ are positively nested}
	\end{cases}
\end{align}
And similarly, the 2-tensor $\pi_{\beta, \alpha}$ acts on the $\beta$-component of the first factor and on the $\alpha$-component of the second factor of $G^E \times G^E$ and is defined as $\pi_{\beta, \alpha} = \tau(\pi_{\alpha, \beta})$, where $\tau$ swaps the two tensor factors. Similarly, for the remaining three cases, we define 
\begin{align}
	\pi_{\alpha ,\beta} \coloneqq \begin{cases} 
		r_{1,2}^{\ad(\kappa_\alpha),\ad(\kappa_\beta)} & \text{, if $\alpha$ and $\beta$ are negatively unlinked} \\
		r_{1,2}^{\ad(\kappa_\alpha),\ad(\kappa_\beta)} + 2t^{R,L(\kappa_\beta)} & \text{, if $\alpha$ and $\beta$ are negatively linked} \\
	r_{1,2}^{\ad(\kappa_\alpha),\ad(\kappa_\beta)} + 2t^{R,L(\kappa_\beta)} - 2t^{L(\kappa_\alpha),L(\kappa_\beta)} & \text{, if $\alpha$ and $\beta$ are negatively nested}
	\end{cases}
\end{align}
and set again $\pi_{\beta,\alpha} = \tau(\pi_{\alpha,\beta})$. A direct computation shows that $\pi$ agrees with the twisted Fock-Rosly Poisson structure defined in Proposition~\ref{prp:twistedFR}.

\begin{theorem}\label{Thm:QuantisationTwistedFR}
The algebra $a_P^{\kappa_1, \dots, \kappa_{2g+r-1}}$ is a quantisation of the twisted Fock-Rosly Poisson structure on $\mathcal{M}^\circ_\rho(\Sigma) \cong G^{2g + r -1}$. Its subalgebra of $U_\hbar(\mathfrak{g})$-invariants does not depend on the choice of the gluing pattern $P$ and is a quantisation of the Poisson structure on the affine quotient $\mathcal{M}_\rho^\circ(\Sigma) \DS G$.
\end{theorem}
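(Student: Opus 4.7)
The plan is to verify the deformation condition $[f_\hbar, g_\hbar]/\hbar \equiv \{f, g\} \pmod{\hbar}$ on $a_P^{\kappa_1,\dots,\kappa_n}$ by expanding its algebra structure to first order in $\hbar$ and matching the result with the explicit formula for $\pi$ given just before the theorem. The two essential inputs are the expansion $\mathcal{R} = 1 + \hbar r + O(\hbar^2)$ of the universal $R$-matrix of $U_\hbar(\mathfrak{g})$ and the $\Out(G)$-invariance of $r$ (hence of both $\omega$ and $t$), which follows from Proposition \ref{prp:OutGactionRephG}. I would proceed edge-by-edge first, then treat cross terms, and finally address the gluing-pattern independence of the invariant subalgebra.

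For a single edge, Example \ref{ex:CoendRep(H)} presents $\mathcal{O}_\hbar^{\kappa_i}(G)$ as the algebra of matrix coefficients whose product is the FRT multiplication twisted by a cocycle built from $\mathcal{R}$ and the automorphism $\kappa_i$. Substituting the expansion of $\mathcal{R}$ and antisymmetrising modulo $\hbar^2$ on generators, the contribution of $\mathcal{R}$ applied on the left produces right-invariant vector fields while the twisted contribution $\kappa_i.\mathcal{R}'_1$ produces $\kappa_i$-twisted left-invariant vector fields; collecting terms gives exactly $\pi_{\mathrm{STS}}^{\kappa_i}$. The undecorated case $\kappa_i = e$ is the classical derivation of \cite{STSPoisson}, and the general case differs only by the universal substitution $x^L \rightsquigarrow (\kappa_i)_* x^L$ forced by the twist.

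For the cross terms, I would compute the semi-classical limit of each of the six crossing morphisms $L^\pm, N^\pm, U^\pm$ from Figure \ref{Fig:crossingmorphisms}. Each is a product of instances of the braiding in $\Rep_\hbar(G)$, hence polynomial in $\mathcal{R}^{\pm 1}$, so its antisymmetrised first-order term applied to matrix coefficients on edges $\alpha < \beta$ is a sum of $r$-twisted bivectors acting on the correct tensor factors. The main obstacle is combinatorial bookkeeping: one has to track at which height in the crossing diagram each strand has already been acted upon by $\kappa_\alpha$ or $\kappa_\beta$, so that the $r$-contributions appear with the correct mix of right-invariant and twisted left-invariant vector fields, and the supplementary $t^{L(\kappa_\alpha),R}$ and $t^{L(\kappa_\alpha),L(\kappa_\beta)}$ terms in the linked and nested cases emerge from the symmetric piece of a double crossing. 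The $\Out(G)$-invariance of $r$ ensures that the intermediate twists cancel and the result depends only on the linking type, and a case-by-case comparison with the formulas for $\pi_{\alpha,\beta}$ completes this step.

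Finally, Theorem \ref{Thm: value surface} identifies $a_P^{\kappa_1,\dots,\kappa_n}\text{-mod}_{\Rep_\hbar(G)}$ with the gluing-pattern independent category $\int_{(\Sigma,\rho)}\Rep_\hbar(G)$, so different gluing patterns yield Morita equivalent algebras in $\Rep_\hbar(G)$; passing to $U_\hbar(\mathfrak{g})$-invariants (equivalently, internal endomorphisms of the unit of the module category) produces an algebra independent of $P$. Since by Proposition \ref{prp:twistedFR} the induced Poisson bracket on $G$-invariant functions is likewise gluing-pattern independent and the classical $G$-action is Poisson-Lie by construction of $\pi_{\mathrm{FR}}$, the bracket descends to the affine quotient $\mathcal{M}_\rho^\circ(\Sigma)\DS G$, and the first half of the theorem restricted to invariant elements gives the required quantisation.
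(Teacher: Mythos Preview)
Your proposal is correct and follows essentially the same route as the paper: a direct first-order expansion of $\mathcal{R}$ on each edge to recover $\pi_{\text{STS}}^{\kappa_i}$, followed by a case-by-case semi-classical computation of the crossing morphisms $U^\pm,L^\pm,N^\pm$ to match the cross-terms $\pi_{\alpha,\beta}$. The paper carries out these computations explicitly as elements of $U(\mathfrak{g})^{\otimes 4}$ acting on matrix-coefficient components, while you describe them at one level of abstraction higher, and you supply a Morita-style argument for the gluing-pattern independence of the invariant subalgebra that the paper's proof leaves implicit.
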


\begin{proof}
First, we show that the quasi-classical limit of the commutator of two quantised functions in $\mathcal{O}_\hbar^{\kappa}(G)$ agrees with the $\kappa$-twisted STS Poisson structure $\pi_{\text{STS}}^\kappa$. We recall from Example \ref{ex:CoendRep(H)} that the multiplication in the $\kappa$-twisted REA $\mathcal{O}_\hbar^{\kappa}(G)$ is related to the multiplication in the FRT-algebra via a twisting cocycle given in terms of R-matrices. The commutator in the (untwisted) FRT-algebra $H^\circ$, $H = U_\hbar(\mathfrak{g})$, can be computed by acting with 
$$
(1 \otimes^{\rev} 1) \boxtimes (1\otimes 1) - (\mathcal{R}_2^{-1} \otimes^{\rev} \mathcal{R}^{-1}_1 )  \boxtimes (\mathcal{R}'_2 \otimes \mathcal{R}'_1) 
$$
on the components  $V^\vee \otimes^{\rev} W^\vee \boxtimes V \otimes W$, for $V, W \in \Rep_\hbar(G)$, since the multiplication in the FRT-algebra is given by the Hopf pairing $\langle - , - \rangle$ between $H^\circ$ and $H$:
$$
\langle m_{\text{FRT}}(\phi \psi), h \rangle = \langle \phi \otimes \psi, \Delta(h) \rangle , \quad \phi, \psi \in H^\circ, h \in H
$$
and $\Delta(-) = \mathcal{R}^{-1} \Delta^{\text{op}}(-) \mathcal{R}$. Now we take into account the twist by $\kappa$, as well as the twisting cocycle $\mathcal{R}'_1 \otimes \kappa.\mathcal{R}_1 \otimes \mathcal{R}'_2 \mathcal{R}_2 \otimes 1$, to compute the commutator in $\mathcal{O}_\hbar^\kappa(G)$ component-wise by acting with 
\begin{align}\label{eq:commutator}
& (\mathcal{R}'_1 \otimes^{\rev} \mathcal{R}'_2 \mathcal{R}_2) \boxtimes  ( \kappa.\mathcal{R}_1 \otimes 1) - C \circ (\mathcal{R}'_2 \mathcal{R}_2 \otimes^{\rev} \mathcal{R}'_1) \boxtimes (1 \otimes \kappa.\mathcal{R}_1)  \\
& \text{where }C = (\mathcal{R}_2^{-1} \otimes^{\rev} \mathcal{R}^{-1}_1 )  \boxtimes (\kappa.\mathcal{R}'_2 \otimes \kappa.\mathcal{R}'_1) 
\end{align}
on $V^\vee \otimes^{\rev} W^\vee \boxtimes V \otimes W$. To compute the quasi-classical limit of the action \eqref{eq:commutator}, we use that in the limit $\exp(\hbar) \to 1$, the R-matrix has the following expansion: $\mathcal{R} = 1 + \hbar r + \mathcal{O}(\hbar^2)$, where $r = r_1 \otimes r_2 \in \mathfrak{g}^{\otimes 2}$ is the classical r-matrix. Explicitly, the quasi-classical limit of \eqref{eq:commutator} is
$$
r^{3(\kappa),2} + r^{1,2} - r^{4(\kappa),1} - r^{2,1} + r^{2,1} - r^{4,3}  \in U(\mathfrak{g})^{\otimes 4}  \ \ ,
$$ 
where for instance $r^{3(\kappa),2} = 1 \otimes r_2 \otimes r_1^{\kappa} \otimes 1 \in U(\mathfrak{g})^{\otimes 4}$ and the superscript $\kappa$ means that the respective action will be twisted by $\kappa$. More explicitly, the first two copies of $U(\mathfrak{g})^{\otimes 4}$ act on $\mathcal{O}^\kappa(G)$ via $x \mapsto x^r$, for $x \in \mathfrak{g}$, and the last two copies act via $x \mapsto -\kappa_*x^L$. Thus, we find that the quasi-classical limit of the commutator is the bivector field on $G$ given by
\begin{align*}
-r^{L(\kappa),R} + r^{R,R} + r_{2,1}^{R,L(\kappa)} - r_{2,1}^{L,L}
& = \omega^{\ad(\kappa),\ad(\kappa)} + t^{R,L(\kappa)} - t^{L(\kappa),R} \\
& = \pi_{\text{STS}}^\kappa \ \ ,
\end{align*}
where we used that $r^{R,R} - r_{2,1}^{L,L} = \omega^{R,R} + \omega^{L,L}$.\par
Next, we prove the claim for two positively unlinked edges $\alpha < \beta$. We recall that the crossing morphism for two unlinked edges $\alpha < \beta$ is given by acting on $\mathcal{O}^{\kappa_\beta}_\hbar(G) \otimes \mathcal{O}^{\kappa_\alpha}_\hbar(G)$ with 
\begin{align*}
U^+ & = \tau_{12,34} \circ (\mathcal{R}_1 \otimes 1 \otimes 1 \otimes \kappa_\alpha.\mathcal{R}_2)(1 \otimes \kappa_\beta.\mathcal{R}_1 \otimes 1 \otimes \kappa_\alpha.\mathcal{R}_2)(\mathcal{R}_1 \otimes 1 \otimes \mathcal{R}_2 \otimes 1)(1 \otimes \kappa_\beta.\mathcal{R}_1 \otimes \mathcal{R}_2 \otimes 1) \\
& \coloneqq \tau_{12,34} \circ \widetilde{U}^+
\end{align*}
Hence, the commutator on components $\phi \otimes \kappa_\alpha^*v \in \mathcal{O}^{\kappa_\alpha}_\hbar(G)$ and $\psi \otimes \kappa_\beta^*w \in \mathcal{O}^{\kappa_\beta}_\hbar(G)$ can be computed via
$$
(m_{\mathcal{O}^{\kappa_\alpha}_\hbar(G)} \otimes m_{\mathcal{O}^{\kappa_\beta}_\hbar(G)}) \circ (1 - (U^+)^{7,8,1,2}) (\phi \otimes \kappa_\alpha^*v \otimes 1^{\otimes 4} \otimes \psi \otimes \kappa_\beta^*w ) \ \ .
$$
Taking the quasi-classical limit of this action thus amounts to 
\begin{equation}\label{eq:commutator2}
\frac{1-\tau(\widetilde{U}^+)}{\hbar}~\text{mod}(\hbar) = -r^{3,2(\kappa_\alpha)} - r^{4(\kappa_\beta),2(\kappa_\alpha)} - r^{3,1} - r^{4(\kappa_\beta),1} \in U(\mathfrak{g})^{\otimes 4} \ \,
\end{equation}
where this time the first and third copy in $U(\mathfrak{g})^{\otimes 4}$ act via $x \mapsto x^R$ and the second and the forth copy via $x \mapsto -\kappa_*x^L$, so that the right hand side of \eqref{eq:commutator2} acts on $\mathcal{O}^{\kappa_\alpha}(G) \otimes \mathcal{O}^{\kappa_\beta}(G)$ via $-r_{2,1}^{\ad(\kappa_\alpha),\ad(\kappa_\beta)}$, which agrees with $\pi_{\alpha,\beta}$ as claimed. Similarly, for two positively linked edges we have
$$
\frac{1-\tau(\widetilde{L}^+)}{\hbar} \text{ mod}(\hbar)  = r^{2(\kappa_\alpha),3} - r^{4(\kappa_\beta),2(\kappa_\alpha)} - r^{3,1} - r^{4(\kappa_\beta),1} \ \,
$$
and we see that in the positively linked case the 2-tensor $\pi_{\alpha, \beta}$ differs from the unlinked case by adding a term $-2 t^{L(\kappa_\alpha),R}$. Lastly, for two positively nested edges we find  
$$
\frac{1-\tau(\widetilde{N}^+)}{\hbar} \text{ mod}(\hbar)  = r^{2(\kappa_\alpha),3} + r^{2(\kappa_\alpha),4(\kappa_\beta)} - r^{3,1} - r^{4(\kappa_\beta),1} \ \,
$$
which differs from the linked case by adding the term $2t^{L(\kappa_\alpha),L(\kappa_\beta)}$, which ends the proof for the positively unlinked, linked and nested case. The remaining three cases can be worked out analogously.
\end{proof}

\newpage

\end{document}